\documentclass[12pt]{amsart}
\usepackage[utf8]{inputenc}
\usepackage{fullpage}
\usepackage{color}
\usepackage{graphicx}
\usepackage{amsmath,amsthm,amssymb,amsfonts}
\usepackage{enumerate}

\begin{document}

\def\baire{{\nat}^{\nat}}
\def\ideal{{\mathcal I}}
\def\binary{2^{< \omega}}
\def\nat{\mathbb{N}}
\def\cantor{2^{\nat}}
\def\fin{\mbox{\sf Fin}}
\def\filter{{\mathcal F}}
\def\ppoint{$\mbox{p}^+$}
\def\Pm{\mbox{$ \text{p}^-$}}
\def\qpoint{$\mbox{q}^+$}
\def\nin{\not\in}
\def\nwd{\mbox{\sf nwd}}
\def\fsig{F_\sigma}
\def\X{\mathbb{X}}
\def\Y{\mathbb{Y}}

\newtheorem{definition}{Definition}[section]
\newtheorem{theorem}[definition]{Theorem}
\newtheorem{example}[definition]{Example}
\newtheorem{corollary}[definition]{Corollary}
\newtheorem{lemma}[definition]{Lemma}
\newtheorem{proposition}[definition]{Proposition}
\newtheorem{question}[definition]{Question}
\newtheorem{claim}[definition]{Claim}

\title{Combinatorial properties on nodec countable spaces with analytic topology}

\author{Javier Murgas and Carlos Uzc\'ategui}
\address{Escuela de Matem\'aticas, Facultad de Ciencias, Universidad Industrial de
	Santander, Ciudad Universitaria, Carrera 27 Calle 9, Bucaramanga,
	Santander, A.A. 678, COLOMBIA. }
\email{javier\_murgas@hotmail.com.
}
\address{Escuela de Matem\'aticas, Facultad de Ciencias, Universidad Industrial de
	Santander, Ciudad Universitaria, Carrera 27 Calle 9, Bucaramanga,
	Santander, A.A. 678, COLOMBIA. Centro Interdisciplinario de L\'ogica y \'Algebra, Facultad de Ciencias, Universidad de Los Andes, M\'erida, VENEZUELA.}
\email{cuzcatea@saber.uis.edu.co.}
\thanks{The second author thanks Vicerrector\'ia de Investigaci\'on y Extensi\'on de la Universidad Industrial de Santander for the financial support for this work,  which is part  of the VIE project  \#2422.}

\date{}

\begin{abstract}We study some variations of the  product topology on families of clopen subsets of $\cantor\times\nat$ in order to construct countable nodec regular spaces (i.e. in which every nowhere dense set is closed) with  analytic topology which in addition are not selectively separable and do not satisfy the  combinatorial principle  $q^+$. 
\end{abstract}

\maketitle

\noindent {\em Keywords:}  nodec countable
spaces; analytic sets, selective separability, $q^+$ 

\noindent {\em MSC: 54G05, 54H05, 03E15}

\section{Introduction}

A topological space $X$ is {\em selectively separable} ($SS$), if for
any sequence $(D_n)_n$ of dense subsets of $X$ there is a finite set 
$F_n\subseteq D_n$,  for $n\in\nat$,  such that $\bigcup_n F_n$ is dense in
$X$. This notion was introduced by Scheepers \cite{Scheeper99}  and  has received a lot of attention ever since (see for instance \cite{BarmanDow2011,BarmanDow2012,Bella2009,Bella_et_al2008,Bella2013,CamargoUzca2018b,Gruenhage2011,Reposvetal2010}). Bella et al. \cite{Bella_et_al2008}  showed that every separable space with countable fan tightness is $SS$. On the other hand, Barman and Dow \cite{BarmanDow2011}  showed that every separable Fr\'echet space is also $SS$ (see also \cite{CamargoUzca2018b}). 

A topological space is  {\em maximal}  if it is a dense-in-itself regular space such that any strictly finer topology has an isolated point.
It was shown by van Douwen \cite{Vand} that a space is maximal if, and only if, it is {\em extremely disconnected} (i.e. the closure of every open set is open), {\em nodec} (i.e. every nowhere dense set is closed)  and every open set is {\em irresolvable} (i.e. if $U$ is open and $D\subseteq U$ is dense in $U$, then $U\setminus D$ not dense in $U$). He constructed a countable maximal regular space. 

A countable space $X$  is \qpoint\ at a point $x\in X$, if given any collection of finite sets $F_n\subseteq X$ such that $x\in \overline{\bigcup_n F_n}$, there is $S\subseteq \bigcup_n F_n$ such that  $x\in \overline{S}$ and  $ S\cap F_n$ has at most one point  for each $n$. We say that $X$ is a {\em \qpoint-space} if it is \qpoint\ at every point.  Every countable sequential space is \qpoint\ (see \cite[Proposition 3.3]{Todoruzca2000}). 
The collection of clopen subsets of $\cantor$ with the product topology is not \qpoint\ at any point. This notion is motivated by the analogous concept of a \qpoint\ filter (or ideal) from Ramsey theory. 

A problem stated in \cite{Bella_et_al2008} was to analyze the behavior of selective separability on maximal spaces. The existence of a maximal regular SS space  is independent of ZFC. In fact, in ZFC  there is a maximal non SS space \cite{BarmanDow2011} and  it is consistent with ZFC that no countable maximal space is SS \cite{BarmanDow2011, Reposvetal2010}. On the other hand, it is also consistent that there is a maximal,  countable, SS regular space  \cite{BarmanDow2011}. 

In this paper  we are interested in these properties on  countable spaces with an analytic topology (i.e. the topology of the space $X$ is an analytic set  as a subset of $2^X$ \cite{todoruzca}).   Maximal topologies are not analytic.  In fact, in \cite{Todoruzca2014} it was shown that there are neither extremely disconnected nor irresolvable analytic topologies, nevertheless there are nodec regular spaces with analytic topology. In view of the above mentioned results about maximal spaces, it seems natural to wonder about the behavior of selective separability on nodec spaces with an analytic topology. Nodec regular spaces are not easy to construct. We continue the study of the method introduced in \cite{Todoruzca2014} in order to construct similar  nodec regular  spaces with analytic topology that are neither SS nor \qpoint.  A countable regular space has an analytic topology if, and only if,  it is homeomorphic to a subspace of $C_p(\baire)$ \cite{todoruzca}. Thus our examples are constructed using some special topologies on a collection of clopen subsets of $\cantor\times \nat$. It is an open question whether there is a nodec $SS$ regular space with analytic topology. 

\section{Preliminaries}

An {\em ideal} on a set $X$ is a collection $\ideal$ of subsets of
$X$ satisfying: (i) $A\subseteq B$ and $B\in \ideal$, then $A\in \ideal$. (ii) If $A,B\in\ideal$, then  $A\cup B\in \ideal$. (iii)  $\emptyset \in \ideal$.  We will always assume that an ideal contains all finite subsets of $X$.  If $\ideal$ is an ideal on $X$, then $\ideal^+=\{A\subseteq X:\, A\nin \ideal\}$. 
 \fin\ denotes the ideal of finite subsets of the non negative integers $\nat$. An ideal $\ideal$ on $X$ is {\em tall}, if for every $A\subseteq X$ infinite, there is $B\subseteq A$ infinite with $B\in \ideal$.  We denote by $A^{<\omega}$ the collection of finite sequences of elements of $A$. If $s$ is a finite sequence on $A$ and $i\in A$, $|s|$ denotes its length and $ s\widehat{\;\;}i$ the sequence obtained concatenating $s$ with $i$.   For $s\in\binary$ and $\alpha\in \cantor$, let $s\prec \alpha$ if $\alpha(i)=s(i)$ for all $i<|s|$ and  
 $$
 [s]=\{\alpha\in \cantor: \; s\prec \alpha\}.
 $$
 If $\alpha\in\cantor$ and $n\in \nat$, we denote by $\alpha\restriction n$ the finite sequence $(\alpha(0),\cdots,\alpha(n-1))$ if $n>0$ and  $\alpha\restriction 0$ is the empty sequence. The collection of all $[s]$  with $s\in\binary$ is a basis of clopen sets for $\cantor$.  As usual we identify each $n\in \nat$ with $\{0,\cdots, n-1\}$. 

The ideal of nowhere dense subsets of $X$ is denoted by $\nwd(X)$.  
Now we recall some combinatorial properties of ideals.  We put $A\subseteq^*B$ if $A\setminus B$ is finite.
\begin{enumerate}

\item[({$p^+$})] $\ideal$ is \ppoint, if for every decreasing sequence $(A_n)_n$ of sets in $\ideal^+$, there is $A\in \ideal^+$ such that $A\subseteq^* A_n$ for all
$n\in\nat$. Following  \cite{HMTU2017}, we say that $\ideal$   is $\Pm$, if for every decreasing sequence $(A_n)_n$ of sets in $\ideal^+$ such that $A_n\setminus A_{n+1}\in \ideal$, there is $B\in \ideal^+$ such that $B\subseteq^* A_n$ for all $n$.

\item[($q^+)$] $\ideal$ is \qpoint\ , if for every $A\in \ideal^+$ and every
partition $(F_n)_n$ of $A$ into finite sets, there is $S\in\ideal^+$
such that $S\subseteq A$ and $S\cap F_n$  has at most one element for
each $n$. Such sets $S$ are called (partial) {\em selectors} for the partition. 
\end{enumerate}

A point $x$ of a topological space $X$ is called a {\em Fr\'echet
point}, if for every $A$ with $x\in \overline{A}$  there is a sequence $(x_n)_n$ in $A$
converging to $x$. We will say
that $x$ is a \qpoint-{\em point}, if $\ideal_x$ is  \qpoint.   We say that a space is a \qpoint-space, if every point is \qpoint. We define analogously the   notion of a \ppoint and $\Pm$ points. Notice that if $x$ is isolated, then $\ideal_x$ is  trivially \qpoint\ as $\ideal_x^+$ is empty. Thus  a space is \qpoint\  if, and only if, $\ideal_x$ is \qpoint\ for every non isolated point $x$. The same occurs with the other combinatorial properties defined in terms of $\ideal_x$. 

We say that a space $Z$ is {\em wSS} if for every sequence $(D_n)_n$ of dense subsets of $Z$, there is $F_n\subseteq D_n$ a finite set, for each $n$, such that $\bigcup_n F_n$ is not nowhere dense in $Z$.  In the terminology of selection principles \cite{Scheeper99}, $wSS$ corresponds to $S_{fin}(\mathcal{D}, \mathcal{B})$ where $\mathcal{D}$ is the collection of dense subsets and $\mathcal{B}$  the collection of non nowhere dense sets. Seemingly this notion has not been considered before. Notice that if $Z$ is $SS$ and $W$ is not $SS$, then the direct sum of $Z$ and $W$ is $wSS$ but not $SS$. 

A subset $A$ of a Polish space is called {\em analytic}, if it is a
continuous image of a Polish space. Equivalently, if there is a
continuous function $f:\baire\rightarrow X$ with range $A$, where
$\baire$ is the space of irrationals.    For
instance, every Borel subset of a Polish space is analytic. A general reference for all descriptive set theoretic notions used in this paper is \cite{Kechris94}. We say
that a topology $\tau$ over a countable set $X$ is {\em analytic},
if $\tau$ is analytic as a subset of the cantor cube $2^X$
(identifying subsets of $X$ with characteristic functions)
\cite{todoruzca, Todoruzca2000,Todoruzca2014}, in this case we will say that $X$ is an {\em analytic space}. A regular countable space is analytic if, and only if, it is homeomorphic to a subspace of $C_p(\baire)$ (see \cite{todoruzca}). If there is a base $\mathcal B$ of $X$ such that $\mathcal B$ is an $F_\sigma$ (Borel) subset of
$2^X$, then we say that $X$ has an {\em $F_\sigma$ (Borel) base}. In general, if $X$ has a Borel base, then the topology of $X$ is analytic.

We end this section recalling some results about countable spaces that will be used in the sequel. 

\begin{theorem}
\label{Fsesp} \cite[Corollary 3.8]{CamargoUzca2018b}
Let $X$ be a countable space with an $F_{\sigma}$ base, then  $X$ is $p^+$.
\end{theorem}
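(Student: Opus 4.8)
The plan is to prove the conclusion pointwise: it suffices to verify that $\ideal_x$ is \ppoint\ for every non-isolated $x\in X$, the isolated points contributing nothing since $\ideal_x^+=\emptyset$ for them. Throughout I use that $\ideal_x=\{A\subseteq X\setminus\{x\}: x\nin\overline{A}\}$, so that $A\in\ideal_x^+$ means precisely $x\in\overline{A}$, equivalently that $A$ meets every neighborhood of $x$. Fix an $F_\sigma$ base $\mathcal B$ of $X$. The first observation I would record is that the trace $\mathcal B_x=\{B\in\mathcal B: x\in B\}$ is a neighborhood base at $x$ and is again $F_\sigma$ in $2^X$, being the intersection of the $F_\sigma$ set $\mathcal B$ with the clopen set $\{S\subseteq X: x\in S\}$. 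Writing $\mathcal B_x=\bigcup_k C_k$ with each $C_k$ closed, hence compact, in $2^X$, and replacing $C_k$ by $\bigcup_{j\le k}C_j$, I may assume $(C_k)_k$ is increasing.

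The key step is a compactness argument showing that each $A\in\ideal_x^+$ can be finitely captured relative to each level $C_k$: for every $k$ there is a finite $F\subseteq A$ meeting every member of $C_k$. To see this, put $U_a=\{S\in 2^X: a\in S\}$ for $a\in X$, a clopen subset of $2^X$. Since $x\in\overline{A}$, every $B\in C_k$ is an open neighborhood of $x$ and so meets $A$, giving $B\in U_a$ for some $a\in A$; thus $\{U_a: a\in A\}$ covers the compact set $C_k$, and any finite subcover determines the desired finite $F\subseteq A$.

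Given a decreasing sequence $(A_n)_n$ in $\ideal_x^+$, I would then apply the previous step to choose, for each $k$, a finite set $F_k\subseteq A_k$ meeting every member of $C_k$, and set $A=\bigcup_k F_k$. The almost-inclusion is immediate: whenever $k\ge n$ we have $F_k\subseteq A_k\subseteq A_n$, so $A\setminus A_n\subseteq\bigcup_{k<n}F_k$ is finite and $A\subseteq^* A_n$ for every $n$. For $A\in\ideal_x^+$, take any basic neighborhood $B\in\mathcal B_x$; it lies in some $C_k$, and $F_k$ meets $B$ by construction, so $A$ meets every member of $\mathcal B_x$ and hence $x\in\overline{A}$. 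This produces the pseudo-intersection required by \ppoint.

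I expect the only delicate point to be the compactness step, which is exactly where the hypothesis is used: the $F_\sigma$ base is what presents the (possibly non-compact) neighborhood base at $x$ as an increasing union of compact families $C_k$, and compactness of each $C_k$ is precisely what converts "meets every neighborhood of $x$" into "meets a single finite subset". Once this is in place, the remaining diagonalization is routine and secures the almost-containment and the accumulation at $x$ simultaneously.
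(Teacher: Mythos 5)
Your proof is correct, and since the paper states this theorem without proof (citing \cite[Corollary 3.8]{CamargoUzca2018b}), there is no in-paper argument to diverge from; your route is precisely the standard one behind the cited result. The two pillars — presenting the $F_\sigma$ trace $\mathcal B_x$ as an increasing union of compact sets $C_k$ in $2^X$, and using the clopen cover $\{U_a : a\in A\}$ of $C_k$ to extract a finite $F_k\subseteq A_k$ meeting every member of $C_k$ — are exactly where the hypothesis is spent, and the concluding diagonalization correctly yields both $A\subseteq^* A_n$ and $x\in\overline{A}$, so $A\in\ideal_x^+$ as required.
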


Next result is essentially  Lemma 4.6 of \cite{Todoruzca2014}. 

\begin{lemma}\label{scompact}
Let $X$ be a  $\sigma$-compact space and $W$ a countable collection of clopen subsets of  $X$. Then $W$, as a subspace of  $2^{X}$,  has an  $F_{\sigma}$ base.
\end{lemma}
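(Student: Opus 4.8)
The plan is to exhibit an explicit base for the subspace topology on $W$ and to recognize that base as a $\sigma$-compact subset of $2^W$. Write $X=\bigcup_n K_n$ with $K_0\subseteq K_1\subseteq\cdots$ compact. Since $W$ is countable, $2^W$ is a compact metrizable space, so it is enough to produce a base that is a countable union of compact sets: compact subsets of the metrizable cube $2^W$ are closed, so any such union is $F_\sigma$.

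The decisive observation is that the map $\phi\colon X\to 2^W$ defined by $\phi(x)=\{A\in W:\, x\in A\}$ is continuous. Indeed, for fixed $A\in W$ the coordinate map $x\mapsto \phi(x)(A)=\mathbf{1}_A(x)$ is continuous exactly because $A$ is clopen in $X$; since continuity into a product is tested coordinatewise, $\phi$ is continuous. This is the one place where the clopen hypothesis is genuinely used, and it is the main point of the argument: for a set that is merely open, $\mathbf{1}_A$ is only lower semicontinuous and $\phi$ need not be continuous.

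Next I would recall that the subspace topology on $W\subseteq 2^X$ has as a base the family $\mathcal B$ of all sets $V(F,G)=\{A\in W:\, F\subseteq A,\ G\cap A=\emptyset\}$ with $F,G\subseteq X$ finite, since these are precisely the traces on $W$ of the basic clopen sets of $2^X$. Viewing $V(F,G)$ as a point of $2^W$, we have $V(F,G)=\bigcap_{x\in F}\phi(x)\cap\bigcap_{y\in G}\bigl(W\setminus\phi(y)\bigr)$, where intersection and complementation are the (continuous) lattice operations of $2^W$. Consequently, for each pair $k,m\in\nat$ the map $\Theta_{k,m}\colon X^{k}\times X^{m}\to 2^W$ sending $(x_1,\dots,x_k,y_1,\dots,y_m)$ to $\phi(x_1)\cap\cdots\cap\phi(x_k)\cap(W\setminus\phi(y_1))\cap\cdots\cap(W\setminus\phi(y_m))$ is continuous, being a finite combination of $\phi$ with the continuous operations of finite intersection and complement (the cases $k=0$ or $m=0$ giving $W$ itself and the purely negative sets).

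To finish, each $X^{k}\times X^{m}=\bigcup_n (K_n^{k}\times K_n^{m})$ is $\sigma$-compact, so its continuous image $\Theta_{k,m}(X^{k}\times X^{m})$ is a countable union of compact, hence closed, subsets of $2^W$. Therefore $\mathcal B=\bigcup_{k,m\in\nat}\Theta_{k,m}(X^{k}\times X^{m})$ is a countable union of such sets, and so is $\sigma$-compact and in particular $F_\sigma$ in $2^W$. Since $\mathcal B$ is a base for $W$, this shows that $W$ has an $F_\sigma$ base. All steps after the continuity of $\phi$ are routine bookkeeping (continuity of finite intersection and complement on $2^W$, and the stability of $\sigma$-compactness under continuous images and countable unions), so the only delicate point is the one isolated in the second paragraph.
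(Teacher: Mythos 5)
Your proof is correct. The paper itself gives no argument for this lemma---it cites Lemma 4.6 of \cite{Todoruzca2014}---and your proof (continuity of $\phi(x)=\{A\in W: x\in A\}$ via clopenness, coding each basic open set $V(F,G)$ as a point of $2^W$ obtained by applying the continuous lattice operations to values of $\phi$, and then writing the base as the image of the $\sigma$-compact spaces $X^{k}\times X^{m}$ under the continuous maps $\Theta_{k,m}$, hence a countable union of compact, therefore closed, subsets of $2^W$) is precisely the standard argument behind that citation, with no gaps.
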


\begin{theorem}\label{pesSS}\cite[Theorem 3.5]{CamargoUzca2018b}
Let $X$ be a countable space. If $X$ is $\Pm$, then  $X$ is $SS$. In particular, if  $X$ has an $F_{\sigma}$  base, then  $X$ is  $SS$.
\end{theorem}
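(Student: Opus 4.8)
The second assertion is immediate from the first together with Theorem \ref{Fsesp}: a space with an $F_\sigma$ base is $p^+$, and $p^+$ trivially implies $\Pm$, since the $\Pm$ condition only asks for a pseudo-intersection for those decreasing sequences in $\ideal^+$ whose successive differences lie in $\ideal$, a subclass of the sequences already handled by $p^+$. So it suffices to prove that a $\Pm$ space is $SS$. Throughout, for a non-isolated point $z$ I write $\ideal_z=\{A\subseteq X:\ z\notin\overline{A}\}$, so that $z\in\overline{A}$ iff $A\in\ideal_z^+$; since $X$ is the whole space, $\bigcup_nF_n$ is dense iff $z\in\overline{\bigcup_nF_n}$, i.e. $\bigcup_nF_n\in\ideal_z^+$, for every non-isolated $z$ (the isolated points must simply belong to $\bigcup_nF_n$, which is harmless, as they lie in every dense set and can be distributed one per piece).

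First I would reduce to a \emph{decreasing} sequence of dense sets. Given dense sets $(E_k)_k$, put $D_n=\bigcup_{k\ge n}E_k$; these are dense and decreasing. If one can produce finite $F'_n\subseteq D_n$ with $\bigcup_nF'_n$ dense, then reassigning each point of $F'_n$ to one of the $E_k$ with $k\ge n$ containing it yields finite $F_k\subseteq E_k$ with the same union, because for a fixed $k$ only the indices $n\le k$ can contribute and each contributes finitely many points. Next I would localize: fix an enumeration $\{z_m:m\in\nat\}$ of the non-isolated points and partition $\nat$ into infinitely many infinite blocks $(P_m)_m$. It is then enough to choose, for each $m$, finite $F_n\subseteq D_n$ $(n\in P_m)$ with $z_m\in\overline{\bigcup_{n\in P_m}F_n}$, since the blocks are disjoint and each $F_n$ is defined exactly once. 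Thus everything reduces to the local statement: \emph{if $z$ is non-isolated, $\ideal_z$ is $\Pm$, and $(D_n)_n$ is decreasing with $D_n\in\ideal_z^+$, then there are finite $F_n\subseteq D_n$ with $\bigcup_nF_n\in\ideal_z^+$} (restricting a decreasing positive sequence to a block preserves these properties).

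To prove the local statement I would feed $(D_n)_n$ to the $\Pm$ property. If $D_\infty=\bigcap_nD_n$ satisfies $z\in\overline{D_\infty}$, choose a countable $C\subseteq D_\infty$ with $z\in\overline{C}$ and spread $C$ one point per index; as $C\subseteq D_\infty\subseteq D_n$ this gives singletons $F_n\subseteq D_n$ with $\bigcup_nF_n=C\in\ideal_z^+$. Otherwise $D_\infty\in\ideal_z$, and I would apply $\Pm$ to $(D_n)_n$ to obtain $B\in\ideal_z^+$ with $B\subseteq^*D_n$ for all $n$. Setting $F_n:=B\cap(D_n\setminus D_{n+1})$, each $F_n$ is finite (it is contained in the finite set $B\setminus D_{n+1}$) and $F_n\subseteq D_n$; moreover $\bigcup_nF_n=B\cap(D_0\setminus D_\infty)$ differs from $B$ only by a subset of $D_\infty$ together with finitely many points, and both of these lie in $\ideal_z$, so $\bigcup_nF_n\in\ideal_z^+$, as required.

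The one hypothesis of $\Pm$ that is not automatic, and which I expect to be the main obstacle, is that the successive differences $D_n\setminus D_{n+1}$ belong to $\ideal_z$, i.e. that the \emph{new} part added at each level does not itself accumulate at $z$. When these differences are eventually in $\ideal_z$ one may discard an initial segment, or pass to a subsequence whose consecutive differences, being finite unions of $\ideal_z$-sets, remain in $\ideal_z$, and then run the argument above. The delicate case is a decreasing positive sequence with $\bigcap_nD_n$ negligible at $z$ in which infinitely many differences persistently accumulate at $z$: here $\Pm$ does not by itself produce a positive set meeting every level finitely, and this is precisely the point at which the regularity of $X$ and the fact that $\Pm$ holds at \emph{every} point, not merely at $z$, must be brought to bear to complete the construction.
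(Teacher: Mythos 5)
Your peripheral reductions are all sound, and they match what one would expect: deducing the ``in particular'' clause from Theorem~\ref{Fsesp} together with the trivial implication from \ppoint\ to \Pm, passing to decreasing dense sequences, localizing over blocks, handling isolated points, and extracting finite selectors from a positive pseudo-intersection via $F_n=B\cap(D_n\setminus D_{n+1})$ with the case split on $\bigcap_n D_n$. (Note the paper itself gives no proof of this theorem; it is quoted from \cite[Theorem 3.5]{CamargoUzca2018b}, so the comparison is against that source.) However, there is a genuine gap, and it is worse than the ``main obstacle'' you flag in your last paragraph: your localized statement --- for $z$ non-isolated with $\ideal_z$ a \Pm\ ideal and $(D_n)_n$ decreasing with $D_n\in\ideal_z^+$, there exist finite $F_n\subseteq D_n$ with $\bigcup_n F_n\in\ideal_z^+$ --- is not merely hard to prove from \Pm; it is \emph{equivalent} to $\ideal_z$ being \ppoint. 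Indeed, if $F=\bigcup_n F_n$ with $F_n\subseteq D_n$ finite, then for each $k$ we have $F\setminus(F_0\cup\cdots\cup F_{k-1})\subseteq D_k$, so $F\subseteq^* D_k$ for all $k$: any positive union of selections is automatically a positive pseudo-intersection, and conversely you showed how to convert a pseudo-intersection into selections. Since \Pm\ is strictly weaker than \ppoint, the localized statement is false. Concretely, the density-zero ideal $\mathcal{Z}$ is a P-ideal and hence \Pm\ (if $A_n\setminus A_{n+1}\in\mathcal{Z}$, take $E\in\mathcal{Z}$ with $A_n\setminus A_{n+1}\subseteq^* E$ for all $n$; then $A_0\setminus E$ is a positive pseudo-intersection), but it is not \ppoint: a nested sequence $(A_k)_k$ with upper density of $A_k$ equal to $2^{-k}$ admits no pseudo-intersection of positive upper density. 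Realizing $\mathcal{Z}$ as $\ideal_z$ in the countable regular space $X=\nat\cup\{z\}$, where the points of $\nat$ are isolated and the neighborhoods of $z$ are the sets $\{z\}\cup(\nat\setminus A)$ with $A\in\mathcal{Z}$, yields a space that is \Pm\ at its unique non-isolated point and satisfies the theorem trivially (every dense set contains all isolated points), yet fails your local statement. So the local statement cannot serve as a correct intermediate lemma.

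The diagnosis is that your localization discards exactly the hypothesis carrying the theorem: the sets $D_n$ are dense in all of $X$, i.e.\ they meet every nonempty open set, not merely positive at $z$, and this global density is what must be exploited to manufacture, for each non-isolated $z$, a decreasing sequence $A_n\subseteq D_n$ with $A_n\in\ideal_z^+$ and $A_n\setminus A_{n+1}\in\ideal_z$ --- equivalently, a single $A\in\ideal_z^+$ with $A\setminus D_n\in\ideal_z$ for every $n$ --- to which \Pm\ can then legitimately be applied, after which your extraction argument does finish. Producing that sequence is the actual content of \cite[Theorem 3.5]{CamargoUzca2018b} (the flavor of argument needed is the maximal-disjoint-family device used in the proof of Theorem~\ref{sq-disc-generated} above), and it is precisely the step your final paragraph leaves as an acknowledged blank. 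As written, \Pm\ is never applicable in the main case, so the proof does not close.
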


A space  $X$ is {\em discretely generated} (DG) if
for every $A\subseteq X$ and $x\in\overline A$, there is $E\subseteq A$ discrete such that $x\in \overline E$. This notion was introduced by Dow et al. in \cite{DTTW2002}. It is not easy to construct spaces which are not DG, the typical examples are maximal spaces (which are  nodec).

\begin{theorem}
\label{sq-disc-generated} Let $X$ be a  regular countable space. Suppose  every non isolated point is \Pm, then  $X$ is discretely generated.
\end{theorem}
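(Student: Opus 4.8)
The plan is to verify the defining property of discrete generation directly, reducing first to the only nontrivial configuration and then manufacturing the required discrete set as a pseudo-intersection supplied by $\Pm$.

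Let $A\subseteq X$ and $x\in\overline A$. If $x\in A$ we may take the discrete set $E=\{x\}$, and if $x$ is isolated then $x\in\overline A$ already forces $x\in A$; so the only case to treat is that of a non isolated point $x\notin A$ with $x\in\overline{A}$, i.e. $A\in\ideal_x^+$. By hypothesis such an $x$ is $\Pm$.

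First I would use regularity together with countability to build a decreasing sequence of open neighborhoods $U_0=X\supseteq U_1\supseteq U_2\supseteq\cdots$ of $x$ with $\overline{U_{n+1}}\subseteq U_n$ and $\bigcap_n U_n=\{x\}$. The nesting is the usual consequence of regularity (separate $x$ from the closed set $X\setminus U_n$), and the extra condition $\bigcap_n U_n=\{x\}$ is arranged by enumerating $X\setminus\{x\}=\{y_n:n\in\nat\}$ and, at stage $n$, separating $x$ from the closed set $(X\setminus U_n)\cup\{y_n\}$ so as to also throw $y_n$ out of $\overline{U_{n+1}}$. Setting $A_n=A\cap U_n$, one checks that $(A_n)_n$ satisfies the hypotheses of $\Pm$: each $A_n\in\ideal_x^+$ because $x\in\overline A$ and $U_n$ is a neighborhood of $x$; the sequence is decreasing; and $A_n\setminus A_{n+1}\subseteq U_n\setminus U_{n+1}$ is missed by the open neighborhood $U_{n+1}$ of $x$, so $A_n\setminus A_{n+1}\in\ideal_x$. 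Applying $\Pm$ at $x$ yields $B\in\ideal_x^+$ with $B\subseteq^*A_n$ for all $n$; discarding $x$ and the finitely many points of $B$ outside $A$, we may assume $x\notin B\subseteq A$, $x\in\overline B$, and $B\subseteq^* U_n$ for every $n$.

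The heart of the argument, and the step I expect to be the main obstacle, is to show that this pseudo-intersection $B$ is discrete. Fix $b\in B$. Since $b\neq x$ and $\bigcap_n U_n=\{x\}$, there is $n$ with $b\in U_n\setminus U_{n+1}$, and then $b\notin\overline{U_{n+2}}\subseteq U_{n+1}$, so $W=U_n\setminus\overline{U_{n+2}}$ is an open neighborhood of $b$. Because $B\subseteq^* U_{n+2}$, the set $W\cap B\subseteq B\setminus U_{n+2}$ is finite, and in a $T_1$ space a point having a neighborhood meeting $B$ in a finite set is relatively isolated in $B$; hence every point of $B$ is isolated in $B$, so $B$ is discrete. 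Thus $E=B$ is a discrete subset of $A$ with $x\in\overline E$, as required. The crucial feature making this work is that $\Pm$, rather than the stronger $p^+$, already suffices, precisely because the annular layers $A_n\setminus A_{n+1}$ lie in $\ideal_x$, which is exactly the side condition appearing in the definition of $\Pm$.
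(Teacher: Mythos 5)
Your proof is correct, but it takes a genuinely different route from the paper's. Both arguments share the same skeleton---manufacture a decreasing sequence $(A_n)_n$ in $\ideal_x^+$ whose consecutive differences lie in $\ideal_x$, apply \Pm\ to get a pseudo-intersection, and verify discreteness by a finite-trace-plus-$T_1$ argument---but the decompositions are different. The paper fixes a maximal family $(O_n)_n$ of pairwise disjoint relatively open subsets of $A$ with $x \nin \overline{O_n}$; maximality together with regularity forces $x \in \overline{\bigcup_n O_n}$, then \Pm\ (applied to the tails $\bigcup_{k \geq n} O_k$) yields $E$ with $x \in \overline{E}$ and $E \cap O_n$ finite for all $n$, and discreteness of $E$ is witnessed inside $A$ by the relatively open pieces $O_n$ themselves. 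You instead build an ``onion'' of neighborhoods $U_n$ of $x$ with $\overline{U_{n+1}} \subseteq U_n$ and $\bigcap_n U_n = \{x\}$ (this is where countability enters your argument, via the enumeration trick, together with regularity and $T_1$), take $A_n = A \cap U_n$, and witness discreteness by the annuli $U_n \setminus \overline{U_{n+2}}$. Each approach buys something: the paper's is shorter, needs no point-trapping intersection, and locates the discreteness witnesses inside $A$ rather than around $x$; yours is more constructive (no appeal to a maximal family) and produces a discrete set with the extra feature $B \subseteq^* U_n$ for all $n$, i.e., a set that pseudo-converges to $x$. Your individual steps all check out: $A \cap U_n \in \ideal_x^+$ because $U_n$ is an open neighborhood of $x$; $A_n \setminus A_{n+1} \in \ideal_x$ because the open set $U_{n+1} \ni x$ misses it (note the closure-nesting is needed only for the later discreteness step); and the local finiteness argument is valid since the countable regular spaces considered here are $T_1$, which both proofs use tacitly at the same spot.
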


\proof Let $A\subset X$ with $x\in \overline A$. Fix a maximal family $(O_n)_n$ of
relatively open disjoint subsets of $A$ such that $x\nin
\overline{O_n}$. Let $B=\bigcup_n O_n$. From the maximality we get that
$x\in \overline{B}$.  Since each $O_n$ does not accumulate to $x$ and $x$
is a \Pm-point, there is $E$ such that $x\in \overline{E}$ and $E\cap O_n$ is
finite for every $n$. Clearly $E$ is a discrete subset of $A$.
\qed

\begin{theorem}(Dow et al \cite[Theorem 3.9]{DTTW2002})
\label{seq-disc-generated} Every Hausdorff sequential space is discretely generated. 
\end{theorem}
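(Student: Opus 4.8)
The plan is to argue by transfinite induction on the ``sequential rank'' at which the point $x$ enters the closure of $A$. Recall that in a sequential space the closure of any set $A$ is obtained by iterating the sequential closure operator: setting $A_0=A$, letting $A_{\alpha+1}$ be $A_\alpha$ together with all limits of convergent sequences drawn from $A_\alpha$, and taking unions at limit stages, one has $\overline A=\bigcup_{\alpha<\omega_1}A_\alpha$ (the union is sequentially closed, since a convergent sequence from it lies in some single $A_\gamma$ with $\gamma<\omega_1$, hence its limit lies in $A_{\gamma+1}$). Thus, given $x\in\overline A$, there is a least ordinal $\alpha$ with $x\in A_\alpha$, and I would prove by induction on $\alpha$ the statement: \emph{for every $y\in A_\alpha$ there is a discrete $E\subseteq A$ with $y\in\overline E$.} The base case $\alpha=0$ is trivial (take $E=\{y\}$) and the limit case is immediate since $A_\alpha=\bigcup_{\beta<\alpha}A_\beta$, so all the work is in the successor step.

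For the successor step, suppose $x\in A_{\beta+1}\setminus A_\beta$; then $x$ is the limit of a sequence $(x_n)_n$ in $A_\beta$, and since $x\notin A_\beta$ we may assume $x_n\neq x$ for all $n$ and, using uniqueness of limits in a Hausdorff space, that the $x_n$ are pairwise distinct. The first point is to extract from $(x_n)_n$ a subsequence together with pairwise disjoint open sets $W_n\ni x_n$ with $x\notin W_n$. This is where the Hausdorff hypothesis enters: I would build the $W_n$ recursively, at each stage separating the current term $x_{n_k}$ from $x$ by disjoint open sets $A_k\ni x_{n_k}$ and $B_k\ni x$, setting $W_k=A_k$, and then continuing inside $B_k$ with the infinitely many remaining terms of the sequence that lie in $B_k$. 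Since $W_k$ is contained in $B_{k-1}\subseteq B_j$ and $W_j=A_j$ is disjoint from $B_j$ for $j<k$, the family $(W_n)_n$ is pairwise disjoint, and passing to the subsequence costs nothing because it still converges to $x$.

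Next, by the induction hypothesis each $x_n$ lies in $\overline{E_n}$ for some discrete $E_n\subseteq A$. I would localize by replacing $E_n$ with $E_n':=E_n\cap W_n$; since $W_n$ is a neighborhood of $x_n$ one still has $x_n\in\overline{E_n'}$, because $\overline{E_n}\cap W_n\subseteq\overline{E_n\cap W_n}$. Setting $E=\bigcup_n E_n'$, the disjointness of the $W_n$ makes $E$ discrete: any $p\in E_m'$ has, by discreteness of $E_m$, a neighborhood $O$ with $O\cap E_m=\{p\}$, and then $O\cap W_m$ meets $E$ only in $p$. Finally $x\in\overline E$, since $x_n\in\overline{E_n'}\subseteq\overline E$ for all $n$ and $x\in\overline{\{x_n:n\in\nat\}}\subseteq\overline E$.

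I expect the main obstacle to be the extraction of the pairwise disjoint neighborhoods $W_n$: in a merely Hausdorff space a countable discrete set need not admit disjoint neighborhoods, so it is essential to exploit that $(x_n)_n$ is a \emph{convergent} sequence and to be willing to pass to a subsequence, which is exactly what the recursive ``separate and go inside $B_k$'' construction delivers. Once the $W_n$ are in hand, the localization and the verification that $E$ is discrete are routine.
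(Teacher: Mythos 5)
Your proof is correct, and the paper itself offers no proof of this statement --- it is quoted directly from Dow, Tkachenko, Tkachuk and Wilson \cite[Theorem 3.9]{DTTW2002}. Your argument (transfinite induction on the iterated sequential closure $A_\alpha$, with the Hausdorff hypothesis used exactly once to extract a subsequence of the convergent sequence expandable to pairwise disjoint open sets $W_n$, then localizing the discrete sets $E_n$ inside the $W_n$) is precisely the standard proof given in that cited reference, so there is nothing to fix or to compare beyond this.
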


In summary, we have the  following implications  for  countable regular spaces (see  \cite{CamargoUzca2018b}).

\[
\begin{array}{cccccccccclcl}
&&&&&&& \\
& & &  &  && &  &\fsig\mbox{-base}\\
 &&&                                   &&&&\swarrow\\
 &  &                     \mbox{Fr\'echet} &&  & & \mbox{\ppoint} & \\
                  &\swarrow & &\searrow   && \swarrow&&   \\
\mbox{Sequential} &  && &\mbox{\Pm}\\
\downarrow        & \searrow &&\swarrow  && \searrow&\\
\mbox{\qpoint} &&  \mbox{DG}& & &&\mbox{$SS$} && \\
               &&  \downarrow& & && \downarrow && \\
               &&  \text{non nodec}& & &&\text{$wSS$} && 
\end{array}
\]
\bigskip

\subsection{A $SS$, \qpoint\ nodec analytic non regular topology}
As we said in the introduction, nodec regular spaces are not easy construct. However, non regular nodec spaces are fairly easy to define.  We recall a well known construction given in \cite{Njastad1965}.  Let $\tau$ be a topology and define
\[
\tau^\alpha=\{V\setminus N:\: V\in\tau\;\mbox{and  $N\in\nwd(\tau)$}\}.
\]
Then $\tau^\alpha$ is a topology finer than $\tau$ (see \cite{Njastad1965}).

\begin{lemma}\cite{Njastad1965}
\label{tau-alpha} Let $(X,\tau)$ be a space.
\begin{itemize}

\item[(i)] $V\in\tau^\alpha$ iff $V\subseteq int_\tau
(cl_\tau(int_\tau(V)))$.

\item[(ii)] Let $A\subseteq X$ and $x\nin A$. Then
$x\in cl_{\tau^\alpha} (A)$ if, and only if, $x\in cl_\tau(int_\tau(cl_\tau(A)))$. 

\item[(iii)] $(X,\tau^\alpha)$ is a nodec space.
\end{itemize}
\end{lemma}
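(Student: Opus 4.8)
The plan is to handle the three parts in order, since parts (ii) and (iii) will rest on part (i) together with a single auxiliary fact about nowhere dense sets.

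For (i) I would prove the two inclusions separately. Assume first $V=U\setminus N$ with $U\in\tau$ and $N\in\nwd(\tau)$. The key observation is that $U\setminus cl_\tau(N)$ is a $\tau$-open subset of $V$, hence $U\setminus cl_\tau(N)\subseteq int_\tau(V)$, and that $U\setminus cl_\tau(N)$ is dense in $U$ (otherwise some nonempty open subset of $U$ would lie inside the closed nowhere dense set $cl_\tau(N)$, contradicting $N\in\nwd(\tau)$). Consequently $U\subseteq cl_\tau(int_\tau(V))$, and since $U$ is open, $V\subseteq U\subseteq int_\tau(cl_\tau(int_\tau(V)))$. For the converse, given $V\subseteq int_\tau(cl_\tau(int_\tau(V)))$, I would set $U=int_\tau(cl_\tau(int_\tau(V)))$ and $N=U\setminus V$; then $V\subseteq U$ gives $U\setminus N=V$, and $N$ is nowhere dense because $N\subseteq cl_\tau(int_\tau(V))\setminus int_\tau(V)$, the boundary of the open set $int_\tau(V)$, which is closed with empty interior.

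For (ii) I would first unwind what it means for $x\notin cl_{\tau^\alpha}(A)$: there is a $\tau^\alpha$-open $V\ni x$ with $V\cap A=\emptyset$. Writing $V=U\setminus N$ with $U\in\tau$ and $N\in\nwd(\tau)$, this is equivalent to the existence of a $\tau$-open $U\ni x$ with $U\cap A$ nowhere dense; here the hypothesis $x\notin A$ is exactly what lets me take $N=U\cap A$ while keeping $x\in U\setminus N$. The crux is then the auxiliary equivalence: for $\tau$-open $U$, the set $U\cap A$ is $\tau$-nowhere dense if and only if $U\cap int_\tau(cl_\tau(A))=\emptyset$. Both directions are short, the main ingredient being that for open $W$ the inclusion $W\subseteq cl_\tau(A)$ forces $W\subseteq cl_\tau(W\cap A)$. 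Granting this, $x\notin cl_{\tau^\alpha}(A)$ becomes equivalent to the existence of an open $U\ni x$ disjoint from $int_\tau(cl_\tau(A))$, i.e. to $x\notin cl_\tau(int_\tau(cl_\tau(A)))$, which is (ii).

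For (iii) I would extract from (ii) the closure formula $cl_{\tau^\alpha}(M)=M\cup cl_\tau(int_\tau(cl_\tau(M)))$ and then prove two implications. If $M$ is $\tau$-nowhere dense, then $int_\tau(cl_\tau(M))=\emptyset$, so the formula gives $cl_{\tau^\alpha}(M)=M$; thus every $\tau$-nowhere dense set is $\tau^\alpha$-closed. Conversely, if $M$ is \emph{not} $\tau$-nowhere dense, then $U:=int_\tau(cl_\tau(M))$ is a nonempty $\tau$-open, hence $\tau^\alpha$-open, set, and the formula together with (ii) yields $U\subseteq cl_{\tau^\alpha}(M)$, so $M$ is not $\tau^\alpha$-nowhere dense. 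The contrapositive gives that $\tau^\alpha$-nowhere dense implies $\tau$-nowhere dense, and combining with the first implication shows every $\tau^\alpha$-nowhere dense set is $\tau^\alpha$-closed, i.e. $(X,\tau^\alpha)$ is nodec.

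I expect the main obstacle to be the auxiliary equivalence in (ii) relating the nowhere-denseness of $U\cap A$ to $U$ meeting $int_\tau(cl_\tau(A))$: it is the single computation that powers both the closure formula and the nodec conclusion, while everything else is careful bookkeeping with interiors, closures, and the definition of $\tau^\alpha$.
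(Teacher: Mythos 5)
Your proposal is correct, but there is nothing in the paper to compare it against: the paper states this lemma as a quotation from Nj\aa stad's 1965 article and gives no proof at all, so any verification must be supplied from scratch, as you did. Your argument is the standard one for $\alpha$-open sets and it is complete: in (i) both the density of $U\setminus cl_\tau(N)$ in $U$ and the identification of $U\setminus V$ with a subset of the boundary of $int_\tau(V)$ are exactly right; in (ii) the reduction of $x\nin cl_{\tau^\alpha}(A)$ to the existence of an open $U\ni x$ with $U\cap A$ nowhere dense correctly isolates where the hypothesis $x\nin A$ is used, and your auxiliary equivalence does power everything else. One micro-step worth spelling out in the converse direction of that equivalence: if $O=int_\tau(cl_\tau(U\cap A))\neq\emptyset$, you need $O\cap U\neq\emptyset$ before concluding $U$ meets $int_\tau(cl_\tau(A))$; this follows from $O\subseteq cl_\tau(U)$ by the same ingredient you already stated (an open set inside $cl_\tau(U)$ must meet $U$), so the gap is only expository. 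The closure formula $cl_{\tau^\alpha}(M)=M\cup cl_\tau(int_\tau(cl_\tau(M)))$ in (iii), and the resulting coincidence of $\nwd(\tau)$ and $\nwd(\tau^\alpha)$, give the nodec conclusion cleanly and in fact prove slightly more than the lemma asserts, which is consistent with how the paper later uses $\tau^\alpha$ (e.g.\ that $\tau$-dense and $\tau^\alpha$-dense sets coincide in Proposition \ref{tau-alpha-q-point}(ii)).
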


\begin{proposition}
\label{tau-alpha-q-point}
Let $(X,\tau)$  be a countable space.
\begin{itemize}
\item[(i)] If $(X,\tau)$ is Fr\'echet, then $(X,\tau^\alpha)$ is
a \qpoint-space.

\item[(ii)] $(X,\tau)$ is $SS$ if, and only if, $(X,\tau^\alpha)$ is $SS$.
\end{itemize}
\end{proposition}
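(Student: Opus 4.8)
The plan is to establish both parts of Proposition~\ref{tau-alpha-q-point} by leaning on the characterization of $\tau^\alpha$-closure given in Lemma~\ref{tau-alpha}(ii), which translates statements about the finer topology into statements about $\tau$ applied to $\tau$-closures and interiors.

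For part (i), I would show that every non-isolated point of $(X,\tau^\alpha)$ is \qpoint. Fix a point $x$ and a sequence of finite sets $(F_n)_n$ with $x\in cl_{\tau^\alpha}(\bigcup_n F_n)$; I seek a selector $S$ (meeting each $F_n$ in at most one point) with $x\in cl_{\tau^\alpha}(S)$. By Lemma~\ref{tau-alpha}(ii), the hypothesis means $x\in cl_\tau(int_\tau(cl_\tau(\bigcup_n F_n)))$, so $x$ lies in the $\tau$-closure of the $\tau$-open set $U=int_\tau(cl_\tau(\bigcup_n F_n))$. Since $(X,\tau)$ is Fr\'echet, I can pick a sequence of points of $U$ converging (in $\tau$) to $x$; the goal is to extract from these points, or from nearby points of $\bigcup_n F_n$, a selector that still $\tau^\alpha$-accumulates to $x$. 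The key device is that each point $u\in U$ has $u\in int_\tau(cl_\tau(\bigcup_n F_n))$, hence a whole $\tau$-neighborhood of $u$ sits inside $cl_\tau(\bigcup_n F_n)$, and $\bigcup_n F_n$ is $\tau$-dense in that neighborhood. I would use the Fr\'echet property a second time inside this neighborhood to reach a point of $\bigcup_n F_n$ close to $u$, and arrange a diagonal choice so that the selected points come from pairwise distinct blocks $F_n$ while their $\tau$-closure still contains an open set accumulating to $x$ in $\tau$. Verifying via Lemma~\ref{tau-alpha}(ii) that the resulting selector $S$ satisfies $x\in cl_\tau(int_\tau(cl_\tau(S)))$ then gives $x\in cl_{\tau^\alpha}(S)$.

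For part (ii), I would prove the equivalence of $SS$ by comparing dense sets in the two topologies. The forward direction is the easier one: since $\tau\subseteq\tau^\alpha$, the space $(X,\tau^\alpha)$ has at least as many open sets, but one must check how density transfers. The crucial observation is that $\tau$ and $\tau^\alpha$ have the same dense sets, or more precisely the same somewhere-dense/nowhere-dense structure at the level relevant to separability: a set $D$ is $\tau^\alpha$-dense exactly when it is $\tau$-dense, because adding the nowhere-dense sets $N$ that distinguish the two topologies does not create new nonempty open sets disjoint from a $\tau$-dense set. Concretely, if $V\setminus N\in\tau^\alpha$ is nonempty with $V\in\tau$ and $N\in\nwd(\tau)$, then $V\setminus cl_\tau(N)$ is a nonempty $\tau$-open subset, so any $\tau$-dense $D$ meets it and hence meets $V\setminus N$. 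This gives that the two topologies share dense sets, after which a given $SS$-witnessing choice of finite sets $F_n\subseteq D_n$ for one topology works verbatim for the other, once one checks that $\bigcup_n F_n$ being $\tau$-dense is the same as being $\tau^\alpha$-dense.

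I expect the main obstacle to be part (i): controlling the selector. It is routine that $x$ $\tau$-accumulates to $\bigcup_n F_n$, but the constraint $|S\cap F_n|\le 1$ interacts delicately with the $int_\tau(cl_\tau(\cdot))$ operators, since passing to a selector $S\subseteq\bigcup_n F_n$ can shrink $int_\tau(cl_\tau(S))$ drastically, potentially losing accumulation to $x$ in $\tau^\alpha$. The real work is to run the two applications of the Fr\'echet property so that the chosen one-per-block points still have $x$ in the $\tau$-closure of the $\tau$-interior of their $\tau$-closure; I anticipate needing to build $S$ along a $\tau$-convergent sequence $(x_k)_k\to x$ with each $x_k$ drawn from a distinct block, and then to argue that $\{x_k\}$ is $\tau$-dense in a $\tau$-neighborhood of each of its own limit behavior so that $int_\tau(cl_\tau(S))$ remains large enough near $x$. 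Part (ii), by contrast, should reduce cleanly to the shared-dense-sets lemma and require no delicate combinatorics.
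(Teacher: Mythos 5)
Your part (ii) is correct and is essentially the paper's argument: the paper also reduces everything to the fact that $\tau$ and $\tau^\alpha$ have the same dense sets (quoting Lemma \ref{tau-alpha}(ii)), and your direct verification via the nonempty $\tau$-open set $V\setminus cl_\tau(N)$ is a fine way to see it. Part (i), however, has a genuine gap, and it lies exactly where you located the difficulty. The construction you settle on --- a selector $S$ built along a $\tau$-convergent sequence $(x_k)_k\to x$, one point per block chosen near each $x_k$ --- cannot succeed, for a structural reason: by Lemma \ref{tau-alpha}(iii) the space $(X,\tau^\alpha)$ is nodec, so any selector that is $\tau$-nowhere dense is $\tau^\alpha$-closed, and since $x\nin S$ this forces $x\nin cl_{\tau^\alpha}(S)$. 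A convergent sequence, even together with countably many satellite sequences converging to its terms, is typically $\tau$-nowhere dense (think of $X=\mathbb{Q}$), so $int_\tau(cl_\tau(S))=\emptyset$ and the witness is lost. Moreover, the two-layer diagonalization you hope for --- from $u_k\to x$ and, for each $k$, points of $\bigcup_n F_n$ converging to $u_k$, extract $z_k\to x$ --- is not available: Fr\'echet does not imply the strong Fr\'echet (diagonal sequence) property, so double limits need not collapse.

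The missing idea, which is how the paper's proof works, is that $S$ need not $\tau$-accumulate to $x$ at all; what must be arranged is that $int_\tau(cl_\tau(S))$ is large, and the way to do this is to target \emph{every} point of $V$, not $x$. Set $A=\bigcup_n F_n$ and $V=int_\tau(cl_\tau(A))$, so that $x\in cl_\tau(V)$ by Lemma \ref{tau-alpha}(ii). Enumerate $V=\{y_m: m\in\nat\}$ (this is where countability of $X$ enters) and, since $A$ is $\tau$-dense in $V$, use the Fr\'echet property once at each $y_m$ to get a sequence in $A$ converging to $y_m$; because each block $F_n$ is finite, a subsequence of it is a selector, and a diagonalization across $m$ thins each such range to an infinite $T_m$ (still converging to $y_m$) so that $S=\bigcup_m T_m$ is a single selector for $(F_n)_n$. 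Then $cl_\tau(S)\supseteq V$, hence $V\subseteq int_\tau(cl_\tau(S))$ and $x\in cl_\tau(V)\subseteq cl_\tau(int_\tau(cl_\tau(S)))$, which by Lemma \ref{tau-alpha}(ii) says exactly $x\in cl_{\tau^\alpha}(S)$ (the case $x\in A$ being trivial). Note that in this argument the selector generally does \emph{not} converge to $x$ in $\tau$ --- the paper even remarks $x\nin cl_\tau(S_m)$ for the individual pieces --- so your instinct to ``build $S$ along a sequence converging to $x$'' was pointing in the wrong direction; density of $S$ in the whole of $V$ is the correct, and essentially forced, target.
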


\proof (i) Suppose $x\in cl_\alpha(A)\setminus A$ and $(F_n)_n$ is
a partition of $A$ with each $F_n$ finite. Let
$V=int_\tau(cl_\tau(A))$. By Lemma \ref{tau-alpha} we have
that $x\in cl_\tau(V)$. Let $(y_m)_m$ be an enumeration of $V$.
Since $A$ is $\tau$-dense in $V$, for every $m$ there is a
sequence $(x^m_i)_i$ in $A$ such that $x^m_i\rightarrow y_m$ when $i
\to \infty$
(with respect to $\tau$). Since each $F_n$ is finite, we can assume (by
passing to a subsequence if necessary) that each $(x^m_i)_i$ is a
selector for the partition $(F_n)_n$. Let $S_m$ be the range of
$(x^m_i)_i$. Notice that $x\nin cl_\tau(S_m)$ and every infinite
subset of $S_m$ is also a selector for $(F_n)_n$.  By a
straightforward diagonalization, for each $m$,  there is  $T_m\subseteq
S_m$ such that each $T_m$ is a selector and moreover $\bigcup_m
T_m$ is also a selector. Hence we can assume that
$S=\bigcup_m\{x^m_i:\;i\in\nat\}$ is a selector for the partition.
But clearly  $S$ is $\tau$-dense in $V$ and thus $V\subseteq
int_\tau(cl_\tau(S))$. Hence $x\in cl_\alpha(S)$ (by Lemma
\ref{tau-alpha}(i)).

(ii) By Lemma \ref{tau-alpha}(ii), a set is $\tau$-dense iff it is
$\tau^\alpha$-dense. \qed

\bigskip

Let $\tau$ be the usual metric topology on the rational ${\mathbb Q}$. It is not difficult to verify that $\tau^\alpha$ is analytic (in fact, it is Borel) and non regular (see \cite{Todoruzca2014}). Thus $({\mathbb Q}, \tau^\alpha)$ is a SS, \qpoint and nodec non regular space with  analytic topology. It is not known if there is a regular space with the same properties.

\section{The spaces $\X(\ideal)$ and  $\Y(\mathcal{I})$}

We recall the definitions of the spaces $\X(\ideal)$ and  $\Y(\ideal)$ for  an ideal $\ideal$, which were introduced in  \cite{Todoruzca2014}. 

For each  non empty ${\mathcal A}\subseteq \cantor$, let $\rho_{\mathcal A}$ be the topology on $2^{\cantor\times \nat}$ generated by  the following sets:
\begin{equation*}
(\alpha,p)^{+} = \{ \theta \in 2^{\cantor\times \nat}: \theta(\alpha,p)=1\}, \hspace{1.2cm} (\alpha,p)^{-} = \{ \theta \in 2^{\cantor\times \nat}: \theta(\alpha,p)=0 \}, 
\end{equation*}
with $\alpha\in \mathcal{A}$. 
A  basic $\rho_{\mathcal A}$-open set is as follows:
$$
V=\bigcap_{i=1}^{m} (\alpha_i,p_i)^+ \cap \bigcap_{i=1}^{n} (\beta_{i},q_i)^- 
$$
for some $\alpha_1,\cdots,\alpha_m,\beta_1,\cdots, \beta_{n} \in {\mathcal A}$, $p_1,,...,p_m,q_1,...,q_n \in \mathbb{N}$.
We always assume that   $(\alpha_i, p_i)\neq (\beta_j, q_j)$ for all $i$ and $j$, which is equivalent to saying that any set  $V$ as above  is not empty. 

Let  $\X$  be the collection of all finite unions of clopen sets of the form  $[s] \times \{n\}$ with  $n \in \mathbb{N}$ and  $s \in 2^{< \omega}$. We also include $\emptyset$ as an element of $\X$. As usual, we regard  $\X$ as a subset of $2^{2^{\mathbb{N}} \times \mathbb{N} }$.  Let  $\{\varphi_n : n \in \mathbb{N} \}$ be an enumeration of  $\X$ and for convenience we assume that $\varphi_0$ is $\emptyset$. Each $\varphi_n$, regarded  as a function   from $2^{\mathbb{N}}\times \mathbb{N}$ to $\{0,1\}$, is continuous.  Notice that $\X$ is a group with the symmetric difference as operation. 

Let  $\psi _n: 2^{\mathbb{N}} \times \mathbb{N} \to \{0,1\} $ be defined by
\begin{equation*}
\psi _n (\alpha,m)=\left\{
\begin{array}{cl}
\varphi_n (\alpha,m), & \text{if } \alpha(n)=0.  \\
1, & \text{if } \alpha(n)=1. \\
\end{array}
\right.
\end{equation*} 
Then $\psi_n$ is a continuous function.  
Let 
\[
\Y=\{\psi_n:\;n\in \nat\}.
\]

Given  $\mathcal{I}\subseteq \cantor$, we define

\begin{eqnarray*}
\X(\mathcal{I}) & = & (\X,\rho_\ideal),\\
\Y(\mathcal{I}) & = & (\Y,\rho_\ideal). 
\end{eqnarray*}
Also notice that $\X(\ideal)$ is a topological group. 

To each  $F \subseteq \mathbb{N}$, we associate two sets $F'\subseteq \X$ and $\widehat{F}\subseteq \Y$:
$$
F':= \{ \varphi_n:\; n \in F\},
$$
$$
\widehat{F}:= \{ \psi_n:\; n \in F\}.
$$
The topological similarities between $F'$ and $\widehat{F}$  are crucial to establish some properties of $\Y(\ideal)$.  

As usual, we identify a subset $A\subseteq \nat$ with its characteristic function. So from now on, an ideal $\ideal$ over $\nat$ will be also viewed as a subset of $\cantor$. 
The properties of $\Y(\ideal)$ naturally depend on the ideal $\ideal$.

\begin{lemma}
\label{complexity}
If $\ideal$ is analytic, then $\X(\ideal)$ and $\Y(\ideal)$ have analytic topologies. 
\end{lemma}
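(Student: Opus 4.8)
The plan is to treat $\X$ and $\Y$ as countable index sets, via the enumerations $\{\varphi_n\}$ and $\{\psi_n\}$, so that $2^\X$ and $2^\Y$ are copies of $\cantor$, and to show that the collection of $\rho_\ideal$-open traces is an analytic subset of $2^\X$, respectively $2^\Y$. I would proceed in two stages: first exhibit an analytic base for each space, and then deduce from it that the whole topology is analytic. It is worth noting the contrast with the case $\ideal=\cantor$, where $\rho_\ideal$ is the full product topology on $2^{\cantor\times\nat}$ and Lemma \ref{scompact} already yields an $\fsig$ base, since $\cantor\times\nat$ is $\sigma$-compact and the $\varphi_n,\psi_n$ are clopen. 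For a proper analytic $\ideal$ the topology is strictly coarser, so I argue directly, letting the analyticity of $\ideal$ enter through the parameters describing the basic open sets.

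First I would parametrize the basic $\rho_\ideal$-open sets. Such a set is determined by finitely many positive constraints $(\alpha_i,p_i)$ and negative constraints $(\beta_j,q_j)$ with $\alpha_i,\beta_j\in\ideal$ and $p_i,q_j\in\nat$, and its trace on $\X$ is $\{\varphi_k : \varphi_k(\alpha_i,p_i)=1 \text{ for all } i,\ \varphi_k(\beta_j,q_j)=0 \text{ for all } j\}$. Let $S$ be the Polish space of pairs of finite sequences of elements of $\cantor\times\nat$; the set $\mathcal P\subseteq S$ of parameter tuples all of whose $\cantor$-components lie in $\ideal$ is analytic, because $\ideal$ is analytic and analyticity is preserved under finite products and countable unions. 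The map $G\colon\mathcal P\to 2^\X$ sending a tuple to its trace is Borel: fixing $k$ and the discrete data $m,n,(p_i),(q_j)$, continuity of $\varphi_k$ on $\cantor\times\nat$ turns the requirement $\varphi_k(\alpha_i,p_i)=1,\ \varphi_k(\beta_j,q_j)=0$ into a clopen condition on $(\alpha_i)_i,(\beta_j)_j$, so each coordinate of $G$ is Borel. Since the Borel image of an analytic set is analytic, the base $\mathcal B=G(\mathcal P)$ is an analytic subset of $2^\X$; the identical computation, with $\psi_k$ in place of $\varphi_k$, produces an analytic base in $2^\Y$.

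Finally I would pass from the analytic base to the analytic topology. A set $U\subseteq\X$ is $\rho_\ideal$-open iff for every $\varphi\in U$ there is $B\in\mathcal B$ with $\varphi\in B\subseteq U$. The relation $\{(U,\varphi,B): B\in\mathcal B,\ \varphi\in B,\ B\subseteq U\}$ is analytic, since $\mathcal B$ is analytic, $\varphi\in B$ is clopen, and $B\subseteq U$ is closed in $2^\X\times 2^\X$; hence its projection onto the $(U,\varphi)$ coordinates is analytic, and quantifying universally over the countable set $\X$ exhibits the topology as a countable intersection of analytic sets, therefore analytic. The argument for $\Y$ is the same. The one step I expect to carry the weight is precisely this projection: one must check that the existential quantifier ``there exists a tuple of constraints drawn from $\ideal$'' keeps us inside the analytic class, which is exactly where the hypothesis that $\ideal$ is analytic is used, in tandem with the countability of $\X$ and $\Y$, which reduces openness to a countable Boolean combination of analytic conditions rather than an uncountable one.
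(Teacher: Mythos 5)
Your proof is correct and takes essentially the same route as the paper: the paper simply notes that the standard subspace subbases of $\X(\ideal)$ and $\Y(\ideal)$ are analytic when $\ideal$ is, and then invokes \cite[Proposition 3.2]{todoruzca} for the passage from an analytic (sub)base to an analytic topology. Your Borel parametrization of the basic open sets and the countable-quantifier argument that openness is a countable intersection of projections of analytic conditions just make explicit the two steps the paper leaves to that citation.
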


\begin{proof}
It is easy to see that the standard subspace subbases for $\X(\ideal)$ and $\Y(\ideal)$ are also analytic when $\ideal$ is analytic. Thus the topology is analytic (see \cite[Proposition 3.2]{todoruzca}). 
\end{proof}

\begin{theorem}
\label{fsigY}
If  $\mathcal{I}$ is  an $F_{\sigma}$ ideal over $\nat$, then $\Y(\mathcal{I})$ has an $F_{\sigma}$ base and thus it is SS and DG.
\end{theorem}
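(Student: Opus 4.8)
The plan is to realize $\Y(\ideal)$, up to homeomorphism, as a countable family of clopen subsets of a $\sigma$-compact space, and then quote Lemma \ref{scompact} to obtain the $\fsig$ base. Once the $\fsig$ base is secured, the remaining two assertions are immediate from the results collected earlier: SS follows directly from Theorem \ref{pesSS}, and for DG I would observe that $\Y(\ideal)$ is regular (it will be a subspace of the zero-dimensional space $2^{Z}$ below), that it is $p^+$ by Theorem \ref{Fsesp}, hence $\Pm$ at every non isolated point since $p^+$ trivially implies $\Pm$, so that Theorem \ref{sq-disc-generated} applies.

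To carry out the realization, set $Z=\ideal\times\nat$, regarded as a subspace of $\cantor\times\nat$. This is where the hypothesis is used essentially: since $\ideal$ is $\fsig$ in $\cantor$, write $\ideal=\bigcup_k K_k$ with each $K_k$ closed in the compact space $\cantor$, hence compact; then $Z=\bigcup_{k,n}(K_k\times\{n\})$ is a countable union of compact sets, so $Z$ is $\sigma$-compact. Next consider the restriction map $R\colon 2^{\cantor\times\nat}\to 2^{Z}$, $R(\theta)=\theta\restriction Z$. By the very definition of $\rho_\ideal$, whose subbasic sets $(\alpha,p)^{+},(\alpha,p)^{-}$ only involve coordinates with $\alpha\in\ideal$, the topology $\rho_\ideal$ is exactly the initial topology induced by $R$ from the product topology on $2^{Z}$. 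Each $\psi_n$ is continuous on $\cantor\times\nat$, so $\psi_n\restriction Z$ is a clopen subset of $Z$, and $R$ carries $\Y$ onto the countable family $W:=\{\psi_n\restriction Z:\ n\in\nat\}$ of clopen subsets of $Z$.

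It then remains to check that $R$ is injective on $\Y$, so that $R\restriction\Y\colon \Y(\ideal)\to W\subseteq 2^{Z}$ is a homeomorphism. Given $n\neq m$, take $\alpha=\{n\}$, which belongs to $\ideal$ because $\ideal$ contains all finite sets; then $\alpha(n)=1$ forces $\psi_n(\alpha,p)=1$ for every $p$, while $\alpha(m)=0$ gives $\psi_m(\alpha,p)=\varphi_m(\alpha,p)$. Since $\varphi_m$ is a finite union of sets $[s]\times\{k\}$, its slice $\{p:\ (\alpha,p)\in\varphi_m\}$ is finite, so there is $p$ with $\psi_m(\alpha,p)=0\neq 1=\psi_n(\alpha,p)$; as $(\alpha,p)\in Z$, the two restrictions differ. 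Hence $\Y(\ideal)$ is homeomorphic to $W$, and Lemma \ref{scompact}, applied to the $\sigma$-compact space $Z$ and the countable clopen family $W$, produces an $\fsig$ base for $W$, and therefore for $\Y(\ideal)$.

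The main obstacle is conceptual rather than computational: one must recognize that restricting to the coordinates indexed by $\ideal$ turns $\rho_\ideal$ into a genuine subspace topology of $2^{Z}$, and that the $\fsig$ hypothesis on $\ideal$ is precisely what makes the index space $Z=\ideal\times\nat$ $\sigma$-compact (for a merely analytic ideal this step breaks down and one only recovers the analytic topology of Lemma \ref{complexity}). The injectivity verification and the identification of the two topologies are the only points requiring care; everything else is a direct appeal to Lemma \ref{scompact} together with the implications recorded in the chart.
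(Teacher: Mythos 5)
Your proof is correct and follows essentially the same route as the paper, whose proof is just the one-line citation of Lemma \ref{scompact} together with Theorems \ref{pesSS} and \ref{sq-disc-generated}; your realization of $\Y(\ideal)$ as the clopen family $\{\psi_n\restriction(\ideal\times\nat):n\in\nat\}$ inside $2^{\ideal\times\nat}$, with $\rho_\ideal$ as the initial topology of the restriction map and the injectivity check via $\alpha=\{n\}$, simply makes explicit the embedding that the paper leaves implicit when invoking Lemma \ref{scompact}. The added details (the $\sigma$-compactness of $\ideal\times\nat$ from the $F_\sigma$ hypothesis, regularity of $\Y(\ideal)$ as a subspace of $2^{\ideal\times\nat}$, and $p^+\Rightarrow\Pm$ before applying Theorem \ref{sq-disc-generated}) are all accurate.
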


\begin{proof}
It follows from Lemma \ref{scompact} and Theorems \ref{pesSS} and \ref{sq-disc-generated}.
\end{proof}

The reason to study the space $\Y( \ideal)$ is the following theorem. Let   
$$
\mathcal{I}_{nd}:=\{ F \subseteq \mathbb{N}  : \{  \varphi_n : n \in F \} \text{ is nowhere dense  in  } \X  \}.
$$

\begin{theorem}\cite{Todoruzca2014} \label{yind}
$\Y(\mathcal{I}_{nd})$ is a nodec regular space without isolated points and with an analytic topology.
\end{theorem}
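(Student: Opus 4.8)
The plan is to isolate one clean ``closed set'' computation and one ``crowdedness'' computation, to reduce the three topological assertions to a single correspondence between nowhere dense subsets of $\X$ (with its product topology, in which $\mathcal{I}_{nd}$ is defined) and subsets $\widehat{F}$ of $\Y$, and to treat analyticity separately. Analyticity is the easiest point: by Lemma~\ref{complexity} it suffices that $\mathcal{I}_{nd}\subseteq\cantor$ be analytic, and in fact it is Borel, since ``$\{\varphi_n:n\in F\}$ is nowhere dense in $\X$'' unravels into quantifiers over the countably many basic clopen sets of $\X$ and the countably many points $\varphi_j$, with the innermost clause ``$\varphi_n\nin U'$ for all $n\in F$'' closed in $F$. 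I record at the outset that $\mathcal{I}_{nd}$ is a proper ideal containing every finite set: each point of $\X$ is non-isolated, so singletons are nowhere dense, whereas $\nat\nin\mathcal{I}_{nd}$ because $\X$ is not nowhere dense in itself; consequently every member of $\mathcal{I}_{nd}$ is coinfinite.

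First I would prove the driving fact: \emph{if $F\in\mathcal{I}_{nd}$, then $\widehat{F}$ is closed in $\Y(\mathcal{I}_{nd})$.} Since $F\in\mathcal{I}_{nd}$, the characteristic function $\chi_F$ is an admissible coordinate for $\rho_{\mathcal{I}_{nd}}$. For $n\in F$ one has $\chi_F(n)=1$, hence $\psi_n(\chi_F,m)=1$ for every $m$, so $\widehat{F}\subseteq(\chi_F,m)^+$ for all $m$. Conversely, each $\varphi_k$ is a finite union of boxes $[s]\times\{l\}$ and therefore vanishes on the column $\{\chi_F\}\times\{m\}$ for all large $m$; so if $k\nin F$ (whence $\chi_F(k)=0$ and $\psi_k=\varphi_k$ along that column) we may choose $m$ with $\psi_k(\chi_F,m)=0$, and then $(\chi_F,m)^-$ is a neighbourhood of $\psi_k$ missing $\widehat{F}$. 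Thus the complement of $\widehat{F}$ is open. (That $\widehat{F}=\{\psi_k:k\in F\}$ with $\psi_k\in\widehat{F}\iff k\in F$ uses that the $\psi_n$ are distinct, which again follows from each $\varphi_n$ living on finitely many levels.)

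Next I would establish the \emph{crowdedness half} of the correspondence: every $\rho_{\mathcal{I}_{nd}}$-basic set $V=\bigcap_{i}(\alpha_i,p_i)^+\cap\bigcap_{j}(\beta_j,q_j)^-$ that meets $\Y$ in fact contains infinitely many $\psi_n$, and one may require $n$ to avoid any prescribed $G\in\mathcal{I}_{nd}$. Let $\Phi=\bigcap_i(\alpha_i,p_i)^+\cap\bigcap_j(\beta_j,q_j)^-$ be the \emph{same} box read inside $\X$; it is a nonempty clopen subset of $\X$ (nonempty because the coordinates are pairwise distinct), hence not nowhere dense. If the index set $I_\Phi=\{n:\varphi_n\in\Phi\}$ were, up to a finite set, contained in $\bigcup_j\beta_j\cup G$, then $\Phi$ would lie in the nowhere dense set $\bigcup_j\beta_j'\cup G'$ together with finitely many points, contradicting that $\Phi$ is open and nonempty; here I use $\beta_j,G\in\mathcal{I}_{nd}$. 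Hence there are infinitely many $n\in I_\Phi$ with $n\nin\bigcup_j\beta_j\cup G$, and for each such $n$ we get $\beta_j(n)=0$, $\varphi_n(\beta_j,q_j)=0$ and $\varphi_n(\alpha_i,p_i)=1$, so $\psi_n(\beta_j,q_j)=0$ and $\psi_n(\alpha_i,p_i)=1$, i.e. $\psi_n\in V$. Taking $G=\emptyset$ shows $\Y(\mathcal{I}_{nd})$ has no isolated points; taking $G=F$ for $F\in\mathcal{I}_{nd}$ produces a point of $V$ outside $\widehat{F}$, so $\widehat{F}$ has empty interior and, with the previous paragraph, is nowhere dense.

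What remains, and what I expect to be the genuine obstacle, is the converse half: \emph{if $F\nin\mathcal{I}_{nd}$ then $\widehat{F}$ is not nowhere dense in $\Y(\mathcal{I}_{nd})$.} This is exactly the transfer of \emph{somewhere} density from $F'$ in $\X$ to $\widehat{F}$ in $\Y$, and the difficulty is structural: the product topology witnessing that $F'$ is somewhere dense may use coordinates $\gamma\in\cantor$ outside $\mathcal{I}_{nd}$, whereas $\rho_{\mathcal{I}_{nd}}$ only admits coordinates in $\mathcal{I}_{nd}$ (and $\chi_F$ itself is unavailable, precisely because $F\nin\mathcal{I}_{nd}$). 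Overcoming this is where the twist $\psi_n(\alpha,m)=1$ for $\alpha(n)=1$ is indispensable: it lets the ``positive'' part of an $\X$-box be realised in $\Y$ along admissible coordinates, so that a nonempty open subset of $\overline{F'}$ in $\X$ can be converted into a $\rho_{\mathcal{I}_{nd}}$-basic $V$ with $V\cap\Y\subseteq\overline{\widehat{F}}$. This is the technical core behind the ``topological similarities between $F'$ and $\widehat{F}$'' of \cite{Todoruzca2014}, and I would carry it out by analysing, for a candidate $V\ni\psi_k$, exactly which indices $n\in F$ satisfy $\psi_n\in V$ and matching them against the density of $F'$ in the corresponding $\X$-box. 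Granting this, nodec follows at once: any nowhere dense $A\subseteq\Y$ equals $\widehat{F}$ for $F=\{n:\psi_n\in A\}$, the converse half forces $F\in\mathcal{I}_{nd}$, and then $A=\widehat{F}$ is closed by the driving fact. Finally regularity is formal: every subbasic set $(\alpha,p)^{\pm}$ is clopen, so $\Y(\mathcal{I}_{nd})$ has a base of clopen sets and is therefore regular, and it is $T_1$ because for $j\neq k$ the admissible coordinate $\chi_{\{k\}}$ gives, for large $m$, a neighbourhood $(\chi_{\{k\}},m)^+$ of $\psi_k$ omitting $\psi_j$.
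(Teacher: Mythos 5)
Your closedness computation for $\widehat{F}$ with $F\in\ideal_{nd}$, your density/no-isolated-points argument, and your reduction of regularity to the clopen subbase are all correct, and they run along the same lines as the paper's Lemma \ref{Lemanodec}(1), Lemma \ref{crowded} and Lemma \ref{simetricdif} (your count of the indices $n$ with $\varphi_n\in V$ but $\psi_n\nin V$ is exactly the symmetric-difference estimate of Lemma \ref{simetricdif}). But the proof has a genuine hole at exactly the point you flag yourself: the implication ``$\widehat{F}$ nowhere dense in $\Y(\ideal_{nd})$ implies $F\in\ideal_{nd}$'' is asserted, described as the technical core, and never proved; since nodec is the whole content of the theorem, this is not a step one can grant. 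The missing idea is the decomposition of Lemma \ref{IssinIs}(1), applied with $\mathcal{J}=\ideal_{nd}\subseteq\cantor$: if $F\nin\ideal_{nd}$, pick a product-basic $V$ in which $F'$ is product-dense and split $V=W\cap U$, where $W$ collects the coordinates lying in $\ideal_{nd}$ and $U$ the remaining ones; the key observation is that a basic box all of whose coordinates avoid $\ideal_{nd}$ is automatically $\rho_{\ideal_{nd}}$-dense. If $\widehat{F}$ were nowhere dense in $\Y(\ideal_{nd})$, one gets a basic $\rho_{\ideal_{nd}}$-open $W'\subseteq W$ with $\widehat{F}\cap W'=\emptyset$, and the symmetric-difference estimate turns this into $F\cap\{n:\varphi_n\in W'\}\in\ideal_{nd}$, i.e. $F'\cap W'$ is product-nowhere dense, contradicting the density of $F'$ in the nonempty product-open set $W'\cap U\subseteq V$ (nonempty precisely because $U$ is $\rho_{\ideal_{nd}}$-dense). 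Your sketch, which fixes a $V\ni\psi_k$ and matches indices, has no mechanism for disposing of the inadmissible coordinates; the $W$/$U$ split is what removes them.

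The second gap is your analyticity argument, which is wrong as stated. $\X$ carries the topology inherited from the full product $2^{\cantor\times\nat}$, which has uncountably many subbasic clopen sets $(\alpha,p)^{\pm}$, and $\X$ does not have a countable base of clopen sets: it is not even first countable at $\emptyset$, since given any countable family of basic neighbourhoods of $\emptyset$, involving only countably many coordinates $\beta$, one may pick $\alpha$ outside that countable set, and then each basic neighbourhood $\bigcap_{j}(\beta_j,q_j)^-$ contains some $[\alpha\restriction m]\times\{0\}\in(\alpha,0)^+$, so none of them is contained in $(\alpha,0)^-$. Hence ``$F'$ is nowhere dense in $\X$'' does not unravel into countably many quantifiers over basic clopen sets, and your claim that $\ideal_{nd}$ is Borel is unsupported. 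The paper's Lemma \ref{staranalytic} (following Lemmas 4.7--4.8 of \cite{Todoruzca2014}) handles this with a genuinely nontrivial reduction: $M\subseteq 2^J$ is nowhere dense if, and only if, some countable $C\subseteq J$ already witnesses this, i.e. $M\restriction C$ is nowhere dense in $2^C$. This converts membership in $\ideal_{nd}$ into an existential quantifier over sequences $z\in(\cantor\times\nat)^{\nat}$ followed by an arithmetical matrix, which shows $\ideal_{nd}$ is analytic --- all that Lemma \ref{complexity} requires --- but yields no Borel bound.
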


$\Y(\mathcal{I}_{nd})$ was so far the only space we knew with the properties stated above. We will present a generalization of this theorem showing other ideals $\ideal$ such that $\Y(\ideal)$ has the same properties.

\subsection{The space $\X(\ideal)$} 

We present some properties of the space $\X(\ideal)$ that will be needed later.   We are interested in whether $\X(\ideal)$ is DG, SS or \qpoint. We start with a general result which is proven as Theorem \ref{fsigY}. 

\begin{theorem}
If  $\mathcal{I}$ is an $F_{\sigma}$ ideal over $\nat$, then $\X(\mathcal{I})$ has an $F_{\sigma}$ base, and thus it is SS and DG.
\end{theorem}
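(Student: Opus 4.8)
The plan is to run the same argument as for Theorem~\ref{fsigY}, with $\X$ in place of $\Y$. Two things must be checked directly for $\X(\ideal)$: that it carries an $\fsig$ base, and that it is regular; once these are in hand, $SS$ and $DG$ are obtained by quoting the earlier results exactly as in the proof of Theorem~\ref{fsigY}.

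First I would establish the $\fsig$ base through Lemma~\ref{scompact}. Every element of $\X$ is a finite union of clopen sets $[s]\times\{n\}$, hence a clopen subset of $\cantor\times\nat$, and the subbasic sets $(\alpha,p)^+$ and $(\alpha,p)^-$ generating $\rho_\ideal$ only constrain coordinates $(\alpha,p)$ with $\alpha\in\ideal$. Consequently $\rho_\ideal$ is precisely the topology that $\X$ inherits as a subspace of $2^{\ideal\times\nat}$ via the restriction map $\theta\mapsto\theta\restriction(\ideal\times\nat)$, and each $\varphi_n$ restricts to a clopen subset of $\ideal\times\nat$, since $[s]\cap\ideal$ is clopen in $\ideal$. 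Here the $\fsig$ hypothesis enters: viewed inside the compact space $\cantor$, an $\fsig$ ideal $\ideal$ is $\sigma$-compact, so $\ideal\times\nat=\bigcup_n(\ideal\times\{n\})$ is $\sigma$-compact as well. Applying Lemma~\ref{scompact} with the $\sigma$-compact space $\ideal\times\nat$ and the countable collection $\X$ of its clopen subsets then yields an $\fsig$ base for $\X(\ideal)$.

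With the $\fsig$ base available, $SS$ is immediate from Theorem~\ref{pesSS}. For $DG$ I would apply Theorem~\ref{sq-disc-generated}, which requires regularity together with every non-isolated point being \Pm. Regularity holds because the generating subbasis consists of clopen sets, so every basic open set is clopen and $\X(\ideal)$ is zero-dimensional, hence regular. That every point is \Pm\ follows from the $\fsig$ base: Theorem~\ref{Fsesp} gives $p^+$, and $p^+$ implies \Pm, the latter being the special case of the former in which one additionally assumes $A_n\setminus A_{n+1}\in\ideal$. Theorem~\ref{sq-disc-generated} then delivers $DG$.

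The hard part will be the identification in the second paragraph: one must verify carefully that $(\X,\rho_\ideal)$ is homeomorphic to the space of restrictions $\{\varphi_n\restriction(\ideal\times\nat):n\in\nat\}$ sitting inside $2^{\ideal\times\nat}$, so that Lemma~\ref{scompact} applies verbatim. This is exactly where the $\fsig$ assumption on $\ideal$ is used, through the $\sigma$-compactness of $\ideal\times\nat$; the remaining steps are routine quotations of the results collected in the preliminaries.
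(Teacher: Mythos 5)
Your proposal is correct and takes essentially the same route as the paper: the paper's proof is exactly the one-line reduction via Lemma~\ref{scompact} together with Theorems~\ref{pesSS} and \ref{sq-disc-generated} (the same recipe as Theorem~\ref{fsigY}), and you have simply made explicit the details it leaves implicit (the identification of $\X(\ideal)$ with a countable family of clopen subsets of the $\sigma$-compact space $\ideal\times\nat$, regularity via zero-dimensionality, and $p^+\Rightarrow\Pm$ through Theorem~\ref{Fsesp}). The one point you flag but do not verify, injectivity of the restriction map $\theta\mapsto\theta\restriction(\ideal\times\nat)$, is immediate: since $\ideal$ contains all finite sets it is dense in $\cantor$, so two distinct elements of $\X$, whose symmetric difference contains some nonempty $[s]\times\{n\}$, already differ at a point of $\ideal\times\nat$.
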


We will show that $\X(\ideal)$ is not \qpoint\ except in the extreme case when $\ideal$ is $\fin$.  The key lemma to show this is the following result. 

\begin{lemma} \label{xnoq}
There is a pairwise disjoint family  $\{ A_n : n \in \mathbb{N} \}$ of finite subsets of $\X$ such that $\bigcup_{k \in E} A_k$ is dense in $\X$ (with the product topology) for any  infinite  $E \subseteq \mathbb{N}$.  Moreover,  for each infinite set $E\subseteq \nat$,   each selector  $S$  for the family  $\{ A_n : n \in E\}$ and each  $\varphi \notin S \cup \{ \emptyset \}$, there is  $p\in\nat$ and  $\alpha \in 2^{\mathbb{N}}$ such that $\alpha^{-1}(1) \subseteq^* E$, $\varphi \in (\alpha,p)^+$  and  $(\alpha,p)^+ \cap S$ is finite. 
\end{lemma}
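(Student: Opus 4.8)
The plan is to build $\{A_n\}$ by a single recursion using the dyadic tree $\binary$, so that at stage $n$ the elements of $A_n$ realize, at ``resolution $n$'', every finite system of positive and negative conditions whose parameters are coded by an integer $\le n$. Concretely, I fix a bookkeeping that assigns to each $n$ a finite list of tasks — nonempty basic open sets $V=\bigcap_{i\le m}(\alpha_i,p_i)^+\cap\bigcap_{j\le k}(\beta_j,q_j)^-$ coded by integers $\le n$ — and for each such $V$ I put into $A_n$ the witness $\bigcup_i [\alpha_i\restriction \ell_n]\times\{p_i\}$, where the common refinement length $\ell_n$ (with $\ell_n\to\infty$) is taken large enough that these cylinders already separate the finitely many $\beta_j$ from the $\alpha_i$; this is possible once $\ell_n$ exceeds the relevant splitting levels, using $(\alpha_i,p_i)\neq(\beta_j,q_j)$. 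To each such element I append one private tag piece $[\,]\times\{c\}$ in a fresh column $c$ chosen larger than every column used so far. This guarantees simultaneously that the $A_n$ are finite and pairwise disjoint and that each tag column is used only once.

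First I would verify density. Given any nonempty basic $V$ and any infinite $E$, the parameters of $V$ are coded by some integer, so $V$ occurs as a task at every sufficiently large stage; at such a stage the corresponding witness lies in $A_n$ and belongs to $V$, since the appended tag sits in a fresh column and cannot violate the finitely many conditions of $V$. Thus $\{n:A_n\cap V\neq\emptyset\}$ is cofinite, hence meets every infinite $E$, and so $\bigcup_{k\in E}A_k$ meets every nonempty basic open set. Note this only uses that each task is handled at cofinitely many stages, so it works for all infinite $E$ uniformly.

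For the \emph{moreover} clause, fix $E$, a selector $S=\{\sigma_n:n\in E_0\}$ with $E_0\subseteq E$ and $\sigma_n\in A_n$, and $\varphi\neq\emptyset$. Choose a piece $[s]\times\{p\}$ of $\varphi$ and let $P$ be the perfect set of $\alpha\in[s]$ with $\alpha^{-1}(1)\setminus|s|\subseteq E$, which I identify with $2^{E\setminus|s|}$ carrying its natural measure; every $\alpha\in P$ satisfies $\alpha^{-1}(1)\subseteq^* E$ and $\varphi\in(\alpha,p)^+$. For $n\in E_0$, the set $\{\alpha\in P:(\alpha,p)\in\sigma_n\}$ is the trace on $P$ of the column-$p$ slice of $\sigma_n$, a clopen set: its $P$-measure is bounded by the number of column-$p$ cylinders of $\sigma_n$ times $2^{-|E\cap[|s|,\ell_n)|}$, because every such cylinder has length $\ge \ell_n$ and only $E$-compatible ones contribute. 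I would choose $\alpha$ so that it lies in only finitely many of these traces (a Borel–Cantelli / fusion argument along $E$), which forces $(\alpha,p)^+\cap S\subseteq\{\sigma_n:n\in E_0,\ n\text{ below the escape point}\}$ to be finite, as required; the hypothesis $\varphi\notin S$ is used only to know $\varphi$ is not itself forced into this finite set.

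The last step is the main obstacle, and it is genuinely delicate. Density compels the family to contain, at cofinitely many stages, witnesses with \emph{many} separating cylinders in a single column (these arise exactly when positive and negative conditions interleave in one column), and an adversarial selector may choose precisely these fat elements; for a sufficiently sparse $E$ such a chosen $\sigma_n$ could in principle carry \emph{all} $E$-compatible length-$\ell_n$ patterns in column $p$, whose traces would then cover all of $P$ and defeat the fusion. The construction must therefore guarantee that no single stage-$n$ element can cover a large $P$-measure simultaneously for every sparse $E$; I would secure this by making the refinement length $\ell_n$ and the coordinate at which the elements of $A_n$ split both grow with, and be tied to, the stage $n$, so that a point which is forced to be $1$ on $E$ is steered off the stage-$n$ cylinders. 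Making this linkage compatible with witnessing points of arbitrarily large weight — so that density is not destroyed — is the crux, and I would carry it out by performing the refinement and the tagging along a fusion on $\binary$ whose splitting coordinate at stage $n$ is always taken $\ge n$.
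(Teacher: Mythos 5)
There is a genuine gap here, and you have in fact located it yourself: the \emph{moreover} clause is never actually proved. Your Borel--Cantelli/fusion step needs the traces $\{\alpha\in P:(\alpha,p)\in\sigma_n\}$ to have summable $P$-measure along $E$, and the bound you give --- (number of column-$p$ pieces of $\sigma_n$) times $2^{-|E\cap[|s|,\ell_n)|}$ --- is not summable for sparse $E$: density forces the $A_n$ to contain elements with unboundedly many same-column cylinders, while $|E\cap[0,\ell_n)|$ grows only like the position of $n$ inside $E$, so an adversarial selector of fat elements defeats any nonadaptive choice of $\alpha$. This is exactly the scenario you describe in your last paragraph, where the proposed remedy (``splitting coordinate at stage $n$ taken $\ge n$'') remains an unexecuted gesture. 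The paper closes this gap not by measure but by a structural condition combined with an \emph{adaptive} diagonalization. It takes $A_m$ to be the set of all $\varphi=\bigcup_i [s_i]\times\{m_i\}$ with $|s_i|=m+1$, $m_i\le m$, and no two \emph{sibling} cylinders in the same column (property $(*^m)$: $s_i\restriction m\neq s_j\restriction m$ whenever $m_i=m_j$). An element of $A_m$ may be fat, but in column $p_1$ it never contains both $[t\,\widehat{\;\;}\,0]$ and $[t\,\widehat{\;\;}\,1]$. One then defines $\alpha$ recursively, reacting to the selector: $\alpha\restriction |t_1|=t_1$, and for $n\ge|t_1|$ with $n\in E$, set $\alpha(n)=1$ exactly when $[(\alpha\restriction n)\widehat{\;\;}0]\times\{p_1\}\subseteq z_n$. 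If $\alpha(n)=0$ then no length-$(n+1)$ piece of $z_n$ in column $p_1$ contains $\alpha$, and if $\alpha(n)=1$ then such a piece would be the sibling of one already inside $z_n$, contradicting $(*^n)$; so $(\alpha,p_1)\notin z_n$ for every $n\in E$ with $n\ge|t_1|$, and $(\alpha,p_1)^+\cap S\subseteq\{z_k: k<|t_1|,\ k\in E\}$ is finite outright --- no fusion, no measure.

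A second, fixable, error: your bookkeeping cannot ``code by integers'' the tasks, since the basic open sets of the product topology have parameters $\alpha_i,\beta_j$ ranging over $\cantor$, of which there are continuum many; the claim that ``the parameters of $V$ are coded by some integer'' is false as stated. What can be enumerated are finite patterns, and this is in effect what the paper does: whether $\varphi\in V$ depends only on finite restrictions of the parameters, so it suffices that $A_{l+1}$ contain $\bigcup_i[\alpha_i\restriction(l+2)]\times\{p_i\}$ for every choice of length-$(l+2)$ sequences and columns $\le l+1$, with $l$ chosen (and $l+1\in E$) so that the restrictions separate the finitely many distinct parameters of $V$. Taking $A_m$ to be \emph{all} $(*^m)$-witnesses handles every basic open set at every sufficiently large resolution without coding any element of $\cantor$, and it gives density of $\bigcup_{k\in E}A_k$ for every infinite $E$ directly. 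Your fresh-column tags are then unnecessary: the $A_m$ are automatically finite and pairwise disjoint, because the resolution $m+1$ of a $(*^m)$-set is recoverable (rewriting it with longer cylinders creates same-column siblings, and shorter representations need not exist).
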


\begin{proof} We say that a $\varphi\in \X$ has the property $(*^m)$, for $m\in\nat$, if there are $k \in \mathbb{N}$ and finite sequences  $s_i$, for $i=1,...,k$,  of  length  $m+1$ such that  $ \varphi = \bigcup _{i=1}^{k} [s_i]\times\{m_i\}$, $m_i\leq m$ and  $s_i \restriction m \neq s_j \restriction m$,  whenever $m_i=m_j$ (i.e.  $[s_j]\cup [s_i]$ is not a basic clopen set). Let
$$
A_m = \{ \varphi \in \X: \varphi \text{ has the property }(*^m)  \}.
$$
Let $E\subseteq \nat$  be an infinite set. We will show that $A:=\bigcup_{k \in E} A_k$ is dense in $2^{\cantor\times\nat}$.  
Let $V$ be a basic open set of $2^{\cantor\times\nat}$, let us say 
$$
V=\bigcap_{i=1}^{m} (\alpha_i,p_i)^+ \cap \bigcap_{i=1}^{n} (\beta_{i},q_i)^- 
$$
for some $\alpha_1,\cdots,\alpha_m,\beta_1,\cdots, \beta_{n} \in \cantor $, $p_1,\cdots,p_m,q_1,\cdots,q_n \in \mathbb{N}$.
We need to show that $V\cap A$ is not empty.
Pick $l$ large enough such that $l+1\in E$, $l+1>\max\{p_i,q_j: i\leq m, j\leq n\}$,  $\alpha_i\restriction l\neq \alpha_j\restriction l$ for all $i$ and $j$ such that $\alpha_i\neq \alpha_j$, $\beta_i\restriction l\neq \beta_j\restriction l$ for all $i$ and $j$ such that $\beta_i\neq \beta_j$ and $\alpha_i\restriction l\neq \beta_j\restriction l$ for all $i$ and $j$ such that $\alpha_i\neq\beta_j$.  Let $\varphi = \bigcup_{i=1}^{m} [\alpha_i \restriction (l+2)]\times\{p_i\}$. Then $\varphi$ belongs to $A_{l+1}\cap V$.

To see the second claim, let $E\subseteq\nat$ be an infinite set and let $S=\{z_n: n \in E \}$ be a selector, that is,   $z_n \in A_n$ for all  $n \in E$. Fix $\varphi \notin S \cup \{\emptyset\}$, say $\varphi= \bigcup_{i=1}^{l} [t_i]\times\{p_i\}$ for some $t_i\in 2^{<\omega}$ and $p_i\in \nat$.   The required  $\alpha $ is recursively defined as follows:
$$
\alpha (n)=\left\{
		\begin{array}{cl}
		t_1(n), & \text{if } n<|t_1|,  \\
		1, &\mbox{if  $n \geq |t_1|, n \in E$ and  $[(\alpha\restriction n) \widehat{\;\;} 0]\times\{p_1\}   \subseteq z_n$}, \\
		0, & \text{otherwise.}
		\end{array}
		\right.
$$
From the definition of the sets  $A_m$, it is  easily shown that  $(\alpha,p_1) \notin  \bigcup\{ z_k:  k \geq |t_1| \text{ and }k \in E \}$. Clearly   $(\alpha,p_1)^+ \cap S \subseteq \{ z_k : k < |t_1| \text{ and } k \in E \}$ is finite and $\varphi\in (\alpha, p_1)^+$.   Finally, it is also clear from the definition of $\alpha$ that $\alpha^{-1}(1) \subseteq^* E$.

\end{proof}

\begin{theorem}
\label{eqclq}
Let $\mathcal{I}$ be an ideal on $\mathbb{N}$. Then $\X(\ideal)$ is \qpoint\ at some  (every) point  if, and only if, $\ideal=\fin$. 
\end{theorem}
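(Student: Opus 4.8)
The plan is to establish both implications and to reduce the ``some/every'' alternation to homogeneity. Since $\X(\ideal)$ is a topological group under symmetric difference $\triangle$, it is topologically homogeneous: for any $a,b\in\X$ the translation $\theta\mapsto\theta\,\triangle\,(a\,\triangle\,b)$ is a self-homeomorphism sending $a$ to $b$. Hence being \qpoint\ at one point is equivalent to being \qpoint\ at every point (this is exactly what the parenthetical ``(every)'' records), and it suffices to analyse the property at one well-chosen point. I also record, for later use, that $\X(\ideal)$ is Hausdorff for every ideal $\ideal$: two distinct elements of $\X$ differ on some basic clopen $[s]\times\{n\}$, and extending $s$ by zeros yields $\gamma\in\cantor$ with $\gamma^{-1}(1)$ finite, so $\gamma\in\fin\subseteq\ideal$ and $(\gamma,n)$ separates them by a $\rho_\ideal$-clopen set.

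For the implication $\ideal=\fin\Rightarrow\X(\ideal)$ is \qpoint, I would argue that $\X(\fin)$ is metrizable. The subbasis $\{(\alpha,p)^{+},(\alpha,p)^{-}:\alpha\in\fin,\ p\in\nat\}$ is countable, and the evaluation map $\varphi\mapsto(\varphi(\alpha,p))_{(\alpha,p)\in\fin\times\nat}$ is an embedding of $\X(\fin)$ into the compact metric cube $2^{\fin\times\nat}$ (injectivity is the Hausdorff computation above restricted to $\alpha\in\fin$). Thus $\X(\fin)$ is a countable metrizable space, hence Fr\'echet and therefore sequential, and every countable sequential space is \qpoint. So $\X(\fin)$ is \qpoint\ at every point.

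For the converse I would prove the contrapositive: if $\ideal\neq\fin$, then $\X(\ideal)$ is not \qpoint. Since $\fin\subseteq\ideal$, fix an infinite $E\in\ideal$, take the family $\{A_n:n\in\nat\}$ furnished by Lemma \ref{xnoq}, and put $A=\bigcup_{n\in E}A_n$; this is a partition of $A$ into finite sets, and by Lemma \ref{xnoq} the set $A$ is dense in the product topology, hence also in the coarser $\rho_\ideal$, so $x\in\overline A$ for every $x\in\X$. Now choose any $x\in\X\setminus(A\cup\{\emptyset\})$; such $x$ exists, for instance $x=[\langle\rangle]\times\{0\}=\cantor\times\{0\}$, which satisfies no condition $(*^m)$ and so lies in no $A_m$. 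I claim no selector $S$ for $\{A_n:n\in E\}$ accumulates to $x$. Indeed $x\notin S\cup\{\emptyset\}$, so Lemma \ref{xnoq} produces $p\in\nat$ and $\alpha\in\cantor$ with $\alpha^{-1}(1)\subseteq^*E$, $x\in(\alpha,p)^+$ and $(\alpha,p)^+\cap S$ finite. The decisive point is that $\alpha^{-1}(1)\subseteq^*E$ and $E\in\ideal$ force $\alpha^{-1}(1)\in\ideal$, i.e.\ $\alpha\in\ideal$, so that $(\alpha,p)^+$ is an actual $\rho_\ideal$-open neighbourhood of $x$. Since $\X(\ideal)$ is $T_1$ and this neighbourhood meets $S$ in a finite set avoiding $x$, deleting those points gives a neighbourhood of $x$ disjoint from $S$, whence $x\notin\overline S$. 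Thus the partition $\{A_n:n\in E\}$ witnesses that $\X(\ideal)$ is not \qpoint\ at $x$, and by homogeneity not \qpoint\ anywhere.

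I expect the main obstacle to be already resolved by Lemma \ref{xnoq}, which does the genuine combinatorial work of producing, for each $\varphi$ off a selector, a witnessing coordinate $(\alpha,p)$ with support almost inside $E$. Within the present argument the one step demanding care is the inference $\alpha^{-1}(1)\subseteq^*E\in\ideal\Rightarrow\alpha\in\ideal$: this is exactly what upgrades the product-topology neighbourhood of the Lemma to a bona fide $\rho_\ideal$-neighbourhood, and it is the only place where the hypothesis $E\in\ideal$ --- equivalently $\ideal\neq\fin$ --- is used, pinpointing why $\fin$ is the unique ideal for which \qpoint-ness survives.
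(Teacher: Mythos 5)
Your proof is correct and follows essentially the same route as the paper's: homogeneity via the group structure, a countable (sub)base argument for $\ideal=\fin$, and for the converse an application of Lemma \ref{xnoq} to an infinite $E\in\ideal$, with the key observation that $\alpha^{-1}(1)\subseteq^* E$ forces $\alpha\in\ideal$ so that $(\alpha,p)^+$ is a genuine $\rho_\ideal$-neighbourhood meeting the selector finitely. You merely add details the paper leaves implicit (the $T_1$ check allowing the finitely many points of $(\alpha,p)^+\cap S$ to be deleted, and the explicit witness $\cantor\times\{0\}\notin A\cup\{\emptyset\}$), which are correct but do not change the argument.
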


\begin{proof} If $\ideal =\fin$, then $\X(\ideal)$ has a countable basis and thus it is \qpoint\ at every point.  Since $\X(\ideal)$ is homogeneous (as it is  a topological group), then if $\X(\ideal)$ is \qpoint\ at some point, then it is \qpoint\ at every point.  Suppose  now  that there is $E \in \mathcal{I} \setminus \fin$.  We will show that $\ideal$ is not \qpoint\ at some point. Let $\{ A_n : n \in \nat\}$  be the sequence, given by Lemma \ref{xnoq}, of pairwise disjoint finite subsets of  $\X$ such that  $A:=\bigcup_{k \in E} A_k$ is dense in $\X$. Since the topology of $\X$ is finer than the topology of $\X(\ideal)$, then  $A$  is dense in  $\X(\ideal)$.  Let $\varphi \nin A\cup\{\emptyset\}$. We will show that $\X(\ideal)$ fails the property \qpoint\ at $\varphi$. Let $S$ be a selector of $\{ A_n : n \in E\}$. Let $\alpha\in \cantor$ and $p\in \nat$ be as in the conclusion of  Lemma \ref{xnoq}, that is, $\alpha^{-1}(1) \subseteq^* E$, $\varphi \in (\alpha,p)^+$  and  $(\alpha,p)^+ \cap S$ is finite. Notice that $\alpha\in\ideal$ and hence $\varphi$ is not in the $\rho_\ideal$-closure of $S$.  Hence $\X(\ideal)$ is not \qpoint\ at $\varphi$.	

\end{proof}

Now we look at the $SS$ property. The following result provides a method to construct dense subsets of $\X(\ideal)$.

\begin{lemma}
\label{DA}
For each $A\subseteq  \nat$  infinite,  let $\mathbf{D}(A)$ be the following subset of $\X$:
\[
\left\lbrace \bigcup_{i=0}^{k} [s_i] \times \{ m_i \} \in \X:\; A \cap   s_i^{-1}(0) \neq \emptyset\; \mbox{ for all } i\in \{0,...,k\},  k\in \mathbb{N}, s_i \in 2^{< \omega}  \right\rbrace \cup \{\emptyset\}.
\]
Then $A\in  \ideal$ if, and only if, $\mathbf{D}(A)$ is not dense in $\X(\ideal)$ if,and only if, $\mathbf{D}(A)$ is  nowhere dense and closed in $\X(\ideal)$.
\end{lemma}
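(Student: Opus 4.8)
The plan is to prove the cycle of implications (1)$\Rightarrow$(3)$\Rightarrow$(2)$\Rightarrow$(1), where (1) is $A\in\ideal$, (2) is that $\mathbf{D}(A)$ is not dense in $\X(\ideal)$, and (3) is that $\mathbf{D}(A)$ is nowhere dense and closed. The implication (3)$\Rightarrow$(2) is immediate: in a nonempty space a nowhere dense set cannot be dense, since otherwise its closure, the whole space, would have empty interior. So the real content lies in (1)$\Rightarrow$(3) and in the contrapositive of (2)$\Rightarrow$(1), namely $A\nin\ideal\Rightarrow \mathbf{D}(A)$ dense.

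First I would record a representation-independent reformulation of membership in $\mathbf{D}(A)$. For $\varphi\in\X$ and $m\in\nat$ write $\varphi_m=\{\beta\in\cantor:(\beta,m)\in\varphi\}$ for the $m$-th slice (a clopen subset of $\cantor$), and set $U_A=\{\beta\in\cantor: A\subseteq\beta^{-1}(1)\}$. A box $[s]\times\{m\}$ satisfies $A\cap s^{-1}(0)\neq\emptyset$ if and only if $[s]\cap U_A=\emptyset$; refining any clopen slice into boxes by compactness and zero-dimensionality of $\cantor$ then yields the clean equivalence
\[
\varphi\in\mathbf{D}(A)\iff \varphi_m\cap U_A=\emptyset \text{ for every } m.
\]
This is the technical backbone of both remaining implications, and I note that $A$, viewed as a point of $\cantor$, always lies in $U_A$.

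For (1)$\Rightarrow$(3) assume $A\in\ideal$. To see that $\mathbf{D}(A)$ is closed I would show its complement is $\rho_\ideal$-open. If $\varphi\nin\mathbf{D}(A)$, then by the reformulation some slice $\varphi_{m_0}$ meets $U_A$, say $\beta_0\in\varphi_{m_0}\cap U_A$, and a box $[s]\subseteq\varphi_{m_0}$ contains $\beta_0$. The main obstacle is that the subbasis of $\rho_\ideal$ uses only coordinates $(\gamma,p)$ with $\gamma\in\ideal$, so the raw witness $\beta_0$ need not separate $\varphi$ from $\mathbf{D}(A)$. This is exactly where the hypothesis $A\in\ideal$ enters: define $\beta_1$ to agree with $s$ below $|s|$ and with $A$ above $|s|$; then $\beta_1\in[s]\subseteq\varphi_{m_0}$ and $\beta_1\in U_A$, while $\beta_1^{-1}(1)$ differs from $A$ by a finite set, hence $\beta_1\in\ideal$. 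Now $(\beta_1,m_0)^+$ is a genuine $\rho_\ideal$-neighborhood of $\varphi$, and every $\psi$ in it has $\beta_1\in\psi_{m_0}\cap U_A$, so it misses $\mathbf{D}(A)$. Since $\mathbf{D}(A)$ is then closed, for nowhere density it suffices to show it has empty interior: given a nonempty basic open $V$ with negative coordinates at levels $q_1,\dots,q_n$, pick $r\nin\{q_1,\dots,q_n\}$, take any $\varphi_0\in V$, and set $\psi=\varphi_0\cup(\cantor\times\{r\})$ (the full slice, i.e. the box with $s$ the empty sequence). Then $\psi\in\X$, $\psi$ still lies in $V$ (the added slice touches no negative coordinate and only enlarges $\varphi_0$), and $\psi_r=\cantor$ meets $U_A$, so $\psi\in V\setminus\mathbf{D}(A)$.

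For the contrapositive of (2)$\Rightarrow$(1), assume $A\nin\ideal$ and let $V=\bigcap_{i=1}^m(\gamma_i,p_i)^+\cap\bigcap_{j=1}^n(\delta_j,q_j)^-$ be a nonempty basic open set. Since each $\gamma_i\in\ideal$ and $\ideal$ is downward closed, $A\nin\ideal$ forces $A\not\subseteq\gamma_i^{-1}(1)$, so there is $t_i\in A$ with $\gamma_i(t_i)=0$. For each $i$ I would put into $\psi$ the box $[s_i]\times\{p_i\}$ with $s_i=\gamma_i\restriction L_i$, choosing $L_i>t_i$ large enough that $[s_i]$ excludes every $\delta_j$ with $q_j=p_i$ (this is possible because the nonemptiness convention on $V$ rules out $\delta_j=\gamma_i$ when $q_j=p_i$, so the two points can be separated at a finite level). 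Then $\psi=\bigcup_{i=1}^m[s_i]\times\{p_i\}$ lies in $V$, and each $s_i$ has $t_i\in A\cap s_i^{-1}(0)$, so $\psi\in\mathbf{D}(A)$ by the reformulation; hence $\mathbf{D}(A)$ meets every nonempty basic open set and is dense. The only delicate point throughout is the witness-correction step in the proof of closedness, which is precisely where $A\in\ideal$ (via closure of $\ideal$ under finite modifications) is indispensable.
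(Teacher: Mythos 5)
Your proof is correct and follows essentially the same route as the paper: your closedness witness $\beta_1$ is exactly the paper's characteristic function of $A\cup s^{-1}(1)$, and your density argument for $A\nin\ideal$ is the paper's construction $s_i=\gamma_i\restriction L_i$ with $L_i$ past a point of $A\cap\gamma_i^{-1}(0)$ and past the disagreements with the relevant negative coordinates. Your two departures are minor refinements rather than a different approach: the slice reformulation $\varphi\in\mathbf{D}(A)\iff\varphi_m\cap U_A=\emptyset$ for all $m$ makes membership visibly representation-independent (a point the paper leaves implicit), and adding a full slice $\cantor\times\{r\}$ at a fresh level $r\nin\{q_1,\dots,q_n\}$ replaces the paper's all-ones box at level $0$, sidestepping its choice of the length $l$.
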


\begin{proof}

We  first show that $\mathbf{D}(A)$ is closed for every $A\in \ideal$.  We shall show that the complement of $\mathbf{D}(A)$ is open in $\X(\ideal)$. Let $\varphi\in \X\setminus \mathbf{D}(A)$.  Since $\varphi\neq\emptyset$, we have that $\varphi= \bigcup_{i=1}^{k} [s_i] \times \{m_i\}$ and we can assume that $A \cap s_1^{-1}(0) = \emptyset$. Let 
$B=A \cup s_1^{-1}(1)$. Notice that $B\in \ideal$. Let $\beta$ be  the characteristic function of $B$.  Clearly $\beta\in [s_1]$ and thus $\varphi\in  (\beta,m_1)^+$. On the other hand, suppose that  $\varphi'= \bigcup_{i=1}^{l} [t_i] \times \{p_i\}\in (\beta,m_1)^+$. Assume that $\beta\in [t_1]$ and $p_1=m_1$, then $t_1^{-1}(0)\subset \beta^{-1}(0)$ and hence $t_1^{-1}(0)\cap  A=\emptyset$. This shows that $\varphi'\nin \mathbf{D}(A)$ and thus  $(\beta,m_1)^+\cap \mathbf{D}(A)=\emptyset$.

Now we  show that if $A\in \ideal$, then $\mathbf{D}(A)$ is nowhere dense.  Since $\mathbf{D}(A)$ is closed, it suffices to show that it has empty interior. Let $V$ be a basic  $\rho_\ideal$-open set. Let us say 
\begin{equation}
\label{basicopen1}
V=\bigcap_{i=1}^{m} (\alpha_i,p_i)^+ \cap \bigcap_{i=1}^{n} (\beta_{i},q_i)^- 
\end{equation}
for some $\alpha_1,\cdots,\alpha_m,\beta_1,\cdots, \beta_{n} \in \mathcal{I} $, $p_1,,...,p_m,q_1,...,q_n \in \mathbb{N}$. Recall that $(\alpha_i,p_i)\neq (\beta_j,q_j)$ for all $i\neq j$. 
Since $\beta_i\in \ideal$, then $\beta_i^{-1}(0)\neq\emptyset$ for all $i$. Let  $l=\max \{ \min(\beta_i^{-1}(0) ) : 1 \leq i \leq n \}$ and $t$ be the constant sequence 1 of length  $l$. Since $\X$ is clearly $\rho_\ideal$-dense, let  $\varphi\in V\cap \X$. Then $\varphi\cup ([t]
\times \{0\})\in V\setminus  \mathbf{D}(A)$.

Finally, we show that if $A\not\in \ideal$, then $\mathbf{D}(A)$ is dense. Let $V$ be a basic $\rho_\ideal$-open set as given by \eqref{basicopen1}.    Pick $l$ large enough such that $\alpha_i\restriction l\neq \alpha_j\restriction l$ for $i\neq j$, $\beta_i\restriction l\neq \beta_j\restriction l$ for $i\neq j$ and $\alpha_i\restriction l\neq \beta_j\restriction l$ for all $i$ and $j$ such that $\alpha_i\neq\beta_j$.  Then pick $k\geq l$ such that $k\geq \min (\alpha_i^{-1}(0)\cap A)$  for all $i\leq m$ (notice that $\alpha_i^{-1}(0)\cap A\neq \emptyset$ as $A\nin \ideal$ and $\alpha_i\in \ideal$). Let $s_i=\alpha_i\restriction k$ for $i\leq m$ and $\varphi=\bigcup_{i=1}^m [s_i]\times\{p_i\}$. Then $\varphi\in V\cap \mathbf{D}(A)$.

\end{proof}

We remind the reader that $F'$ denotes the set $\{\varphi_n:\; n\in F\}$ for each  $F\subseteq \nat$. 

\begin{theorem}\label{xinoSS}
Let $\ideal$ be an ideal over $\nat$.  If  $\ideal $ is  not $p^+$, then  $\X(\mathcal{I})$ is not $wSS$.
\end{theorem}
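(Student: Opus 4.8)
The plan is to pick the witnessing sequence of dense sets directly from the failure of $p^+$, and then trap every finite selection inside a single nowhere dense set of the form $\mathbf{D}(B)$ with $B\in\ideal$. Since $\ideal$ is not $p^+$, fix a decreasing sequence $(A_n)_n$ of sets in $\ideal^+$ that admits \emph{no} pseudo-intersection in $\ideal^+$; that is, there is no $A\in\ideal^+$ with $A\subseteq^* A_n$ for all $n$. Because each $A_n\notin\ideal$, Lemma~\ref{DA} gives that $D_n:=\mathbf{D}(A_n)$ is dense in $\X(\ideal)$. I claim that $(D_n)_n$ witnesses that $\X(\ideal)$ is not $wSS$, i.e.\ that for every choice of finite sets $F_n\subseteq D_n$ the union $\bigcup_n F_n$ is nowhere dense.

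So fix arbitrary finite $F_n\subseteq D_n=\mathbf{D}(A_n)$. The first step is to replace each $A_n$ by a finite witness set: for every nonempty $\varphi=\bigcup_i [s_i]\times\{m_i\}\in F_n$, the defining condition $A_n\cap s_i^{-1}(0)\neq\emptyset$ of $\mathbf{D}(A_n)$ lets me choose, for each flat piece, a point of $A_n$ lying in $s_i^{-1}(0)$. Collecting all these points over the finitely many $\varphi\in F_n$ and their finitely many pieces yields a \emph{finite} set $B_n\subseteq A_n$ with $F_n\subseteq\mathbf{D}(B_n)$; here I use that $\mathbf{D}$ is monotone, since $B_n\cap s_i^{-1}(0)\neq\emptyset$ already forces $\varphi\in\mathbf{D}(B_n)$.

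The key observation is then that $B:=\bigcup_n B_n$ is a pseudo-intersection of $(A_n)_n$. Indeed, as the $A_n$ are decreasing, $B_m\subseteq A_m\subseteq A_n$ for every $m\geq n$, so $B\setminus A_n\subseteq B_0\cup\cdots\cup B_{n-1}$ is finite, i.e.\ $B\subseteq^* A_n$ for all $n$. By the choice of $(A_n)_n$ no pseudo-intersection can lie in $\ideal^+$, hence $B\in\ideal$. Now monotonicity gives $\bigcup_n F_n\subseteq\bigcup_n\mathbf{D}(B_n)\subseteq\mathbf{D}(B)$, and since $B\in\ideal$, Lemma~\ref{DA} tells us that $\mathbf{D}(B)$ is nowhere dense (and closed). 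Therefore $\bigcup_n F_n$ is nowhere dense, completing the argument.

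The only genuine wrinkle is bookkeeping at the boundary of Lemma~\ref{DA}, which is stated for infinite $A$: if the pseudo-intersection $B$ happens to be finite, I would enlarge it by any infinite member of $\ideal$ (one exists because $\ideal\neq\fin$, as $\fin$ is easily seen to be $p^+$), which keeps $B\in\ideal$ and preserves the inclusion $\bigcup_n F_n\subseteq\mathbf{D}(B)$. Everything else is the routine verification that the extracted witnesses really place each $\varphi$ in $\mathbf{D}(B_n)$ and that the decreasing hypothesis yields $B\subseteq^* A_n$; no use of the $SS$/$wSS$ interplay is needed beyond the definition of $wSS$ and the characterization of $\mathbf{D}(\cdot)$ from Lemma~\ref{DA}.
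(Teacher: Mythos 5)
Your proposal is correct and follows essentially the same route as the paper: both take $D_n=\mathbf{D}(A_n)$ for a decreasing sequence $(A_n)_n$ witnessing the failure of $p^+$, and both collapse any finite selections into finite sets $B_n\subseteq A_n$ whose union $B$ must lie in $\ideal$. The only (cosmetic) difference is the final step: the paper re-runs the argument directly, showing $\bigcup_n K_n$ misses the dense open set $\bigcup_m(\beta,m)^+$ where $\beta$ is the characteristic function of $B$, whereas you reuse the nowhere-density direction of Lemma~\ref{DA} via the monotonicity $\mathbf{D}(B_n)\subseteq\mathbf{D}(B)$ --- a tidy shortcut, with your handling of the finite-$B$ edge case being a reasonable precaution given the lemma's statement for infinite $A$.
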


\begin{proof} Suppose that $\mathcal I$ is not $p^+$ and  fix a sequence $(A_n)_{n \in \mathbb{N}}$  of subsets of  $\mathbb{N}$ such that $A_n \notin \mathcal{I}$,   $n \in \mathbb{N}$,  and  $\bigcup_{n \in \mathbb{N}} F_n \in \mathcal{I}$ for all $F_n \subseteq A_n$ finite. 
		
Let  $D_n=  \mathbf{D}(A_n)$ as in Lemma \ref{DA}.  We show that  the property $wSS$  fails at the sequence $(D_n)_n$. Let $K_n \subseteq D_n$ be  a finite set for each $n$, we need to  show that $\bigcup_n K_n$ is nowhere dense in $\X(\ideal)$. Let us enumerate each $K_n$ as follows: 
$$
K_n= \left\lbrace \bigcup_{i=0}^{k_{n,l}} [s_i^{n, l}]\times\{p^{n,l}_i\}: l < |K_n| \right\rbrace.
$$
Let  $q_n > \max \{ |s_i^{n, l}|: l < |K_n|, i \leq k_{n,l}  \}$.      By hypothesis, $B=\bigcup _{n \in \mathbb{N}} (A_n  \cap \{0, \cdots, q_n \}) \in \mathcal{I}$. Let $\beta$ be  the characteristic function of $B$. We claim that for all $m\in \nat$
$$
(\beta,m)^+ \cap (\bigcup_{n \in \mathbb{N}} K_n) = \emptyset.
$$
Otherwise,  there are  $n \in \mathbb{N}$, $l < |K_n|$ and  $i \leq k_{n,l}$  such that  $\beta \in [s_i^{n, l}]$, that is,  $s_i^{n, l} \preceq \beta$. But this contradicts the fact that  $(A_n \cap \{0, \cdots, q_n\}) \cap  \left( s_i^{n, l} \right)^{-1} (0) \neq \emptyset$ for all $i $ and  $l$ (recall that $D_n=\mathbf{D}(A_n)$).
Thus $(\bigcup_{n \in \mathbb{N}} K_n)\cap (\bigcup_m (\beta,m)^+)=\emptyset$. Since $\bigcup_m (\beta,m)^+$ is $\rho_\ideal$-open dense,  $\bigcup_n K_n$ is $\rho_\ideal$-nowhere dense.
\end{proof}

\begin{proposition}
\label{converseque}
Let $\ideal$ be an ideal over $\nat$. Any element of $\X(\ideal)$ is a limit of a non trivial sequence. 
\end{proposition}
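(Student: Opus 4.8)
I would argue by constructing, for an arbitrary $\varphi\in\X$, an explicit sequence of \emph{distinct} elements of $\X$, all different from $\varphi$, that $\rho_\ideal$-converges to $\varphi$; the perturbation will be a clopen ``bump'' placed at a level not used by $\varphi$ and shrinking to the empty set. First I would reformulate the convergence we need. Since the $\rho_\ideal$-subbasis consists of the sets $(\alpha,p)^{+}$ and $(\alpha,p)^{-}$ with $\alpha\in\ideal$, and every basic open set is a finite intersection of these, a sequence $(\varphi_k)_k$ in $\X$ $\rho_\ideal$-converges to $\varphi$ if and only if for each $\alpha\in\ideal$ and each $p\in\nat$ one has $\varphi_k(\alpha,p)=\varphi(\alpha,p)$ for all large $k$ (recall that $\varphi$ is identified with its characteristic function on $\cantor\times\nat$). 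In other words, convergence is just eventual pointwise agreement, tested only at pairs $(\alpha,p)$ with $\alpha\in\ideal$.

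Next I construct the sequence. Write $\varphi=\bigcup_{i=1}^{n}[s_i]\times\{m_i\}$ and fix $N\in\nat$ such that $\varphi$ contains no point of the form $(\alpha,N)$; such an $N$ exists because $\varphi$ involves only finitely many levels, and this also covers $\varphi=\emptyset$. Choose a family $(u_k)_k$ in $\binary$ that is pairwise incompatible with $|u_k|\to\infty$; concretely one may take $u_k$ of length $k+1$ with $u_k(i)=0$ for $i<k$ and $u_k(k)=1$, so that the clopen sets $[u_k]$ are pairwise disjoint and every $\alpha\in\cantor$ lies in at most one of them. Put
\[
\varphi_k=\varphi\cup\big([u_k]\times\{N\}\big).
\]
Each $\varphi_k$ is a finite union of basic clopen sets, hence $\varphi_k\in\X$; since $N$ is a fresh level, $[u_k]\times\{N\}$ is disjoint from $\varphi$, so $\varphi_k\supsetneq\varphi$ and in particular $\varphi_k\neq\varphi$; and distinctness of the $[u_k]$ makes the $\varphi_k$ pairwise distinct, so the sequence is non-trivial.

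Finally I verify convergence using the reformulation. Because $\varphi_k\supseteq\varphi$, every condition of the form $\varphi(\alpha,p)=1$ is already matched by $\varphi_k(\alpha,p)=1$ for all $k$. For the remaining pairs, $\varphi_k(\alpha,p)=\varphi(\alpha,p)$ unless $p=N$ and $\alpha\in[u_k]$: if $p\neq N$ there is never any disagreement, while if $p=N$ then $\varphi(\alpha,N)=0$ and $\varphi_k(\alpha,N)=1$ exactly when $\alpha\in[u_k]$. Since the $u_k$ are pairwise incompatible, each $\alpha$ belongs to at most one $[u_k]$, so $\varphi_k(\alpha,N)=0$ for all large $k$. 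Thus eventual pointwise agreement holds at every pair $(\alpha,p)$ (indeed for every $\alpha\in\cantor$, not only $\alpha\in\ideal$), giving $\varphi_k\to\varphi$.

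The only genuinely delicate point is this last step, and specifically making it work for \emph{every} ideal $\ideal$ at once: the disagreement between $\varphi_k$ and $\varphi$ must be flushed out to infinity as seen from each test point $\alpha\in\ideal$. A nested bump (say, the clopen set of sequences beginning with $k$ ones placed at level $N$) would force a case analysis around the single all-ones point $\chi_{\nat}$ and would rely on $\ideal$ being proper; the pairwise-incompatibility trick, under which each $\alpha$ meets at most one $[u_k]$, sidesteps this entirely and reduces the verification to a one-line observation, which is why I would set the construction up that way from the start.
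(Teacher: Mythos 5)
Your proof is correct, and its combinatorial core is the same as the paper's: a sequence of basic clopen bumps $[u_k]\times\{N\}$ whose stems are pairwise incompatible, so that every evaluation point $\alpha\in\cantor$ lies in at most one $[u_k]$ and each subbasic condition $(\alpha,p)^{\pm}$ is eventually satisfied. Where you diverge is the reduction step. The paper first invokes that $\X(\ideal)$ is a topological group, so it suffices to exhibit one non-trivial sequence converging to $\emptyset$; it takes $x_n=[\alpha_n\restriction (n+1)]\times\{0\}$ with $\alpha_k\restriction(k+1)\neq\alpha_l\restriction(k+1)$ for $k<l$, a condition whose whole purpose is that no $\beta$ extends two of the stems --- exactly your incompatibility trick. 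You instead handle an arbitrary $\varphi$ directly by attaching the bumps at a fresh level $N$; observe that your $\varphi_k=\varphi\cup([u_k]\times\{N\})$ is precisely the group translate $\varphi\,\triangle\,([u_k]\times\{N\})$, the freshness of $N$ being what turns the translation into a plain union, so your argument is the paper's with the homogeneity step unwound by hand. The paper's route is shorter modulo the group fact; yours is self-contained, does not depend on the group structure (so it transfers to subspaces of $\X$ not closed under symmetric difference but closed under adding such bumps, such as the copy of $c(\ideal)$ inside $\X$), and your verification of eventual agreement at \emph{every} $\alpha\in\cantor$ in fact gives convergence in the full product topology, hence in $\rho_\ideal$ for every ideal simultaneously --- a uniformity the paper's sequence also enjoys but does not state. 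Your closing observation that nested bumps would fail at the all-ones point, and that pairwise incompatibility sidesteps any properness assumption on $\ideal$, is likewise accurate.
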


\begin{proof}
Since $\X(\ideal)$ is a topological group, it suffices to show that there is a sequence converging to $\emptyset$ (i.e. to $\varphi_0$).

Let $(\alpha_n)_{n \in \nat}$ be a sequence in  $2^{\nat}$ such that $\alpha_k \restriction (k+1) \neq \alpha_l \restriction (k+1)$ for each $k<l$.
Let $(x_n)_{n}$ be defined by $x_n= [\alpha_n \restriction (n+1)]\times\lbrace 0\rbrace$. 
Let $V$ be a neighborhood of $\emptyset$, namely, $V= \bigcap_{i=1}^{m} (\beta_i,n_i)^-$ for some $\beta_i\in \ideal$ and $n_i \in \nat$. We have that $\alpha_n \restriction (n+1) \not\preceq \beta_i$ for almost every $n$, therefore $x_n \in V$ and $x_n \to \emptyset$. 

\end{proof}

\begin{question}
When is $\X(\ideal)$ discretely generated?
\end{question}

\subsection{The space $c(\ideal)$}

It is natural to wonder what can be said if instead of  $\X$ we use the more familiar space $CL(\cantor)$ of all clopen subsets of $\cantor$.

Exactly as before we can define a space $c(\ideal)$ as follows.  

\begin{definition} Let $\mathcal{I}$ be an ideal over $\nat$ and  $c(\mathcal{I})$ be $(CL(2^{\mathbb{N}}), \tau_{\mathcal{I}} )$, where  $\tau_{\ideal}$ is generated by the following subbasis:
$$ 
\alpha^+=\{x \in CL(2^{\mathbb{N}}): \alpha \in x \} \hspace{1cm} \text{and}\hspace{1cm} \alpha^-=\{x \in CL(2^{\mathbb{N}}): \alpha \notin x \}, 
$$
where $\alpha \in \mathcal{I}$. 
\end{definition}

In fact, it is easy to see that $c(\ideal)$ is homeomorphic to $\{\bigcup_{i=0}^{k} [s_i] \times \{ 0 \} \in \X:  k\in \mathbb{N}, s_i \in 2^{< \omega} \}$ and by a simple modification of the proofs above we have the following. 

\begin{theorem}
Let $\mathcal{I}$ be an ideal over $\mathbb{N}$. Then $c(\ideal)$ is \qpoint\ at some  (every) point  if, and only if, $\ideal=\fin$. 
\end{theorem}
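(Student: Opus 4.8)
The plan is to repeat, almost verbatim, the proof of Theorem~\ref{eqclq}, carrying the combinatorics of Lemma~\ref{xnoq} down to the ``level $0$'' setting where every element of $\X$ has second coordinate $0$. First I would observe that $c(\ideal)$ is a topological group under symmetric difference: translation by a fixed $y$ carries the subbasic set $\alpha^+$ to $\alpha^+$ if $\alpha\notin y$ and to $\alpha^-$ if $\alpha\in y$, and the operation is jointly continuous since the preimage of $\alpha^+$ is $(\alpha^+\times\alpha^-)\cup(\alpha^-\times\alpha^+)$. Hence $c(\ideal)$ is homogeneous and it suffices to settle the \qpoint\ property at a single point. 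For the trivial direction, if $\ideal=\fin$ the subbasis $\{\alpha^+,\alpha^-:\alpha\in\fin\}$ is countable, so $c(\ideal)$ has a countable base and is \qpoint\ at every point, exactly as in Theorem~\ref{eqclq}.

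For the converse, assume $E\in\ideal\setminus\fin$. The heart of the matter is a copy of Lemma~\ref{xnoq} living inside $CL(\cantor)$. For each $m$ I would set
\[
A_m^0=\Big\{\textstyle\bigcup_{i=1}^{k}[s_i]\in CL(\cantor):\ s_i\in 2^{\,m+1},\ s_i\restriction m\neq s_j\restriction m\ \text{for }i\neq j\Big\},
\]
the image under the identification $x\mapsto x\times\{0\}$ of the level $0$ part of the sets $A_m$ from Lemma~\ref{xnoq}. The same computation shows that $\{A_m^0\}_m$ is a pairwise disjoint family of finite subsets of $CL(\cantor)$ and that $A:=\bigcup_{k\in E}A_k^0$ is dense in $CL(\cantor)$ for the product topology: given a basic product-open set $V$ prescribing points $\alpha_i$ inside and $\beta_j$ outside, choose $l$ with $l+1\in E$ and with the restrictions to length $l$ of the $\alpha_i$ and $\beta_j$ pairwise distinct, and take $x=\bigcup_i[\alpha_i\restriction(l+2)]\in A_{l+1}^0\cap V$. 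The second conclusion of Lemma~\ref{xnoq} also transfers, now with $p=0$ throughout: for a selector $S=\{z_n:n\in E\}$ of $\{A_n^0:n\in E\}$ and $\varphi\in CL(\cantor)\setminus(S\cup\{\emptyset\})$, the recursion defining $\alpha$ there produces $\alpha\in\cantor$ with $\alpha^{-1}(1)\subseteq^* E$, $\alpha\in\varphi$, and $\alpha^+\cap S$ finite.

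With the level $0$ lemma available, the failure of \qpoint\ follows the pattern of Theorem~\ref{eqclq}. Since $\tau_\ideal$ is coarser than the product topology, $A$ is dense in $c(\ideal)$; fix $\varphi\notin A\cup\{\emptyset\}$ (such $\varphi$ exists, e.g. $\varphi=\cantor$), so that $\varphi\in\overline{A}$ and the finite sets $\{A^0_k:k\in E\}$ form a partition of $A$ relevant to the \qpoint\ property at $\varphi$. Given any selector $S$ (any partial selector being contained in a full one, with a larger closure), the lemma yields $\alpha$ with $\alpha^{-1}(1)\subseteq^* E$, $\alpha\in\varphi$ and $\alpha^+\cap S$ finite; as $\alpha^{-1}(1)\subseteq^* E\in\ideal$ we get $\alpha\in\ideal$, so $\alpha^+$ is a genuine $\tau_\ideal$-neighborhood of $\varphi$ meeting $S$ in a finite set and $\varphi\notin\overline{S}$. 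Thus no selector accumulates to $\varphi$ and \qpoint\ fails there. The one genuinely substantive step, and the place I expect to spend the most care, is checking that both conclusions of Lemma~\ref{xnoq} survive the restriction to level $0$ -- that density is preserved and that the witnessing coordinate can always be taken to be $0$; the remaining arguments are a transcription of the $\X(\ideal)$ proof.
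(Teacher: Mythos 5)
Your proposal is correct and is essentially the paper's own argument: the paper proves this theorem precisely by observing that $c(\ideal)$ is homeomorphic to the level-$0$ subspace $\{\bigcup_{i}[s_i]\times\{0\}\}$ of $\X(\ideal)$ and invoking ``a simple modification'' of Lemma \ref{xnoq} and Theorem \ref{eqclq}, which is exactly the transfer you carry out (your level-$0$ sets $A_m^0$, the recursion defining $\alpha$ with $p=0$, and the homogeneity via the symmetric-difference group structure all match, and your verification that the witnessing coordinate can be taken to be $0$ is the intended modification). Your treatment of partial versus full selectors and the explicit choice $\varphi=\cantor$ are small details the paper glosses over, but they do not change the route.
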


\begin{theorem}
Suppose that $\mathcal{I}$ is an ideal over $\mathbb{N}$. If $\ideal $ is not  $p^+$, then  $c(\mathcal{I})$ is not $wSS$.
\end{theorem}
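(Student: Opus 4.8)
The plan is to mimic the proof of Theorem~\ref{xinoSS}, transporting the dense sets $\mathbf{D}(A)$ to the setting of $c(\ideal)$. Recall that $c(\ideal)$ is homeomorphic to the subspace $Z=\{\bigcup_{i=0}^{k}[s_i]\times\{0\}:\, k\in\nat,\ s_i\in\binary\}$ of $\X(\ideal)$; under this identification, for $A\subseteq\nat$ infinite I would use
\[
\mathbf{D}_c(A)=\left\{\bigcup_{i=0}^{k}[s_i]\in CL(\cantor):\, A\cap s_i^{-1}(0)\neq\emptyset\ \text{for all } i\leq k\right\}\cup\{\emptyset\},
\]
the trace of $\mathbf{D}(A)$ on $Z$. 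First I would record the analogue of Lemma~\ref{DA} for $c(\ideal)$: $A\in\ideal$ iff $\mathbf{D}_c(A)$ is not dense iff $\mathbf{D}_c(A)$ is closed and nowhere dense. Its proof is the restriction of the proof of Lemma~\ref{DA} to the single ``level'' $0$, so in particular when $A\notin\ideal$ the set $\mathbf{D}_c(A)$ is dense in $c(\ideal)$.

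Since $\ideal$ is not $p^+$, I would fix, as in Theorem~\ref{xinoSS}, a sequence $(A_n)_n$ with $A_n\notin\ideal$ for every $n$ and $\bigcup_n F_n\in\ideal$ whenever $F_n\subseteq A_n$ is finite, and set $D_n=\mathbf{D}_c(A_n)$. By the previous paragraph each $D_n$ is dense, so $(D_n)_n$ is a legitimate test sequence for $wSS$. Given arbitrary finite $K_n\subseteq D_n$, enumerating the elements of $K_n$ as $\bigcup_{i\leq k_{n,l}}[s_i^{n,l}]$ and choosing $q_n$ larger than all the lengths $|s_i^{n,l}|$, the hypothesis gives $B:=\bigcup_n(A_n\cap\{0,\dots,q_n\})\in\ideal$. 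Exactly the computation in Theorem~\ref{xinoSS} then shows that if $\beta$ is the characteristic function of any set $B'\in\ideal$ with $B\subseteq B'$, then $\beta\notin x$ for every $x\in\bigcup_n K_n$; that is, $\bigcup_n K_n\subseteq\beta^-$.

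The one genuine difference --- and the point I expect to be the main obstacle --- is the final step of deducing nowhere density. In $\X(\ideal)$ one uses the extra coordinate to see that $\bigcup_m(\beta,m)^+$ is open dense and disjoint from $\bigcup_n K_n$; in $c(\ideal)$ there is no such coordinate, and indeed $\beta^+$ is not dense (it misses the nonempty open set $\beta^-$). I would therefore verify nowhere density directly: given a nonempty basic open set $V=\bigcap_{i=1}^{m'}\alpha_i^+\cap\bigcap_{j=1}^{n'}\gamma_j^-$ with $\alpha_i,\gamma_j\in\ideal$, choose $k\in\nat$ so that the characteristic function $\beta$ of $B\cup\{k\}$ differs from each $\gamma_j$ (possible since the infinitely many sets $B\cup\{k'\}$, $k'\notin B$, are pairwise distinct, all lie in $\ideal$, and can coincide with at most $n'$ of the $\gamma_j$). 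Because $\{\alpha_1,\dots,\alpha_{m'},\beta\}$ and $\{\gamma_1,\dots,\gamma_{n'}\}$ are disjoint finite subsets of the zero-dimensional space $\cantor$, there is a clopen $x$ containing the former and avoiding the latter, so $x\in V\cap\beta^+$ and $V':=V\cap\beta^+$ is a nonempty basic open refinement of $V$. Since $B\subseteq\beta^{-1}(1)$, the computation of the previous paragraph gives $V'\cap\bigcup_n K_n=\emptyset$. As $V$ was arbitrary, $\bigcup_n K_n$ is nowhere dense, and hence $c(\ideal)$ is not $wSS$.
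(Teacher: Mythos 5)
Your proposal is correct and matches the paper's intended argument: the paper states this theorem without proof, asserting it follows ``by a simple modification of the proofs above,'' and what you carry out is exactly that modification of Lemma~\ref{DA} and Theorem~\ref{xinoSS}, restricted to the level-$0$ copy of $c(\ideal)$ inside $\X(\ideal)$. You also correctly identify and repair the one step that does not transport verbatim --- in $c(\ideal)$ there is no extra coordinate making $\bigcup_m(\beta,m)^+$ dense, so you instead refine an arbitrary basic open $V$ by $V\cap(\beta')^+$, where $\beta'\in\ideal$ is a superset of $B$ chosen (using that members of a proper ideal are coinfinite) to differ from the finitely many negative constraints of $V$ --- which is precisely the ``simple modification'' the paper leaves implicit.
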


\subsection{The space $\Y(\ideal)$} 

In this section we work with the space $\Y(\ideal)$ in order to construct nodec spaces. To that end we introduce an operation $\star$ on ideals. 
We remind the reader that to each $F \subseteq \mathbb{N}$ we associate the sets $F'= \{ \varphi_n:\; n \in F\}$ and  $\widehat{F}= \{ \psi_n:\; n \in F\}$.

\begin{definition}
Let $\ideal$ be a nonempty subset of $\cantor$. We define:
$$
\ideal ^{\star} =\{F \subseteq  \nat:\; F' \text{ is nowhere dense in }\X(\ideal) \}. 
$$
\end{definition}

Notice that $\ideal^{\star}$ is a free ideal and $\ideal_{nd}=(\cantor)^{\star}$. We are going to present several results that are useful to compare  $\X(\ideal)$ and $\Y(\ideal)$. 

The following fact will be used several times in the sequel. 

\begin{lemma}
\label{simetricdif}
Let $\ideal$ be an ideal over $\nat$. Let $V$ be a basic $\rho_\ideal$-open set. Then  
\[
\{n\in\nat: \varphi_n\in V \} \triangle \{n\in\nat: \psi_n\in V\}\in \ideal.
\]
\end{lemma}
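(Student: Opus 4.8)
The plan is to analyze the definitions of $\varphi_n$ and $\psi_n$ and see exactly where they differ, then argue that the set of indices where membership in $V$ can differ is controlled by the parameters defining $V$. Recall that $\psi_n(\alpha,m) = \varphi_n(\alpha,m)$ whenever $\alpha(n)=0$, and $\psi_n(\alpha,m)=1$ whenever $\alpha(n)=1$. So $\psi_n$ and $\varphi_n$ agree as functions on the set $\{(\alpha,m): \alpha(n)=0\}$ and possibly differ only at pairs $(\alpha,m)$ with $\alpha(n)=1$, where $\psi_n$ is forced to take value $1$.

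First I would write $V=\bigcap_{i=1}^{m}(\alpha_i,p_i)^+\cap\bigcap_{i=1}^{n}(\beta_j,q_j)^-$ with all $\alpha_i,\beta_j\in\ideal$. Membership $\varphi_n\in V$ means $\varphi_n(\alpha_i,p_i)=1$ for every $i$ and $\varphi_n(\beta_j,q_j)=0$ for every $j$; similarly for $\psi_n$. The key observation is that $\varphi_n(\alpha,p)$ and $\psi_n(\alpha,p)$ can differ for a given pair $(\alpha,p)$ appearing in $V$ only when $\alpha(n)=1$. Since $V$ uses only finitely many points $\alpha_1,\dots,\alpha_m,\beta_1,\dots,\beta_n$, all lying in $\ideal$, the set of indices $n$ for which \emph{some} coordinate $\alpha_i(n)=1$ or $\beta_j(n)=1$ is contained in $\bigcup_i \alpha_i^{-1}(1)\cup\bigcup_j\beta_j^{-1}(1)$, a finite union of sets each belonging to $\ideal$, hence itself in $\ideal$.

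Then I would argue the containment directly. If $n$ is such that $\alpha_i(n)=0$ for all $i$ and $\beta_j(n)=0$ for all $j$, then by the definition of $\psi_n$ we have $\psi_n(\alpha_i,p_i)=\varphi_n(\alpha_i,p_i)$ and $\psi_n(\beta_j,q_j)=\varphi_n(\beta_j,q_j)$ for every relevant pair, so $\varphi_n\in V$ if and only if $\psi_n\in V$. Contrapositively, if $\varphi_n\in V \,\triangle\, \psi_n\in V$ (i.e. $n$ lies in the symmetric difference of the two index sets), then $n$ must satisfy $\alpha_i(n)=1$ for some $i$ or $\beta_j(n)=1$ for some $j$; that is, $n\in\bigcup_i\alpha_i^{-1}(1)\cup\bigcup_j\beta_j^{-1}(1)$. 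This shows the symmetric difference is a subset of a set in $\ideal$, and since $\ideal$ is downward closed, the symmetric difference lies in $\ideal$, as desired.

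The routine part is just unwinding the two cases (the pair is a $+$-constraint or a $-$-constraint); the only point requiring a little care is handling the $-$-constraints $\beta_j(n)=1$: there, forcing $\psi_n(\beta_j,q_j)=1$ destroys the constraint $\psi_n(\beta_j,q_j)=0$, so $\psi_n\notin V$ regardless of $\varphi_n$, but this $n$ is still captured because $\beta_j(n)=1$ puts $n$ in the controlling set. I expect the main (mild) obstacle to be simply being careful that the exceptional index set is built from \emph{all} the defining points of $V$, both the $\alpha_i$ and the $\beta_j$, and that each of these is in $\ideal$ so that the finite union — and hence the symmetric difference it contains — lands in $\ideal$.
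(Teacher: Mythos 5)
Your proof is correct and takes essentially the same route as the paper: both rest on the observation that $\psi_n$ and $\varphi_n$ can disagree at a pair $(\alpha,p)$ only when $\alpha(n)=1$, so the symmetric difference of the two index sets is trapped inside $\bigcup_i\alpha_i^{-1}(1)\cup\bigcup_j\beta_j^{-1}(1)$, a finite union of sets in $\ideal$. The only cosmetic difference is that the paper bounds the two one-sided differences separately (by $\bigcup_j\beta_j^{-1}(1)$ and $\bigcup_i\alpha_i^{-1}(1)$ respectively, exploiting $\varphi_n\subseteq\psi_n$), whereas you use the coarser combined bound, which proves the stated lemma just as well.
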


\begin{proof}
Let $V$ be  a non empty  basic open set, that is, 
\begin{equation}
\label{basicopen}
V=\bigcap_{i=1}^{m} (\alpha_i,p_i)^{+} \cap \bigcap_{j=1}^{l} (\beta_j,q_j)^{-}.
\end{equation}
From the very definition of $\psi_n$ and viewing it as a clopen set, we have that 
$$
\psi_n=\varphi_n\cup (\{\alpha\in \cantor:\;\alpha(n)=1\}\times \nat).
$$
From  this we have the following:
\[
\{n\in\nat: \varphi_n\in V \} \setminus \{n\in\nat: \psi_n\in V\}\subseteq \bigcup_{j=1}^l \beta_j^{-1}(1)
\]
and 
\[
\{n\in\nat: \psi_n\in V \} \setminus \{n\in\nat: \varphi_n\in V\}\subseteq \bigcup_{i=1}^m \alpha_i^{-1}(1).
\]
Thus when  each $\alpha_i$ and each $\beta_j$ belongs to $\ideal$, the unions on the right also belong to $\ideal$.

\end{proof}

In the following we compare $\X(\ideal)$ and $\Y(\ideal)$  in terms of their dense and nowhere dense subsets.   Some results need that the ideals $\ideal$ and $\ideal^\star$ are comparable, i.e. $\ideal\subseteq \ideal^\star$ or $\ideal^\star \subseteq \ideal$, it is unclear whether this is always the case. 

We are mostly interested in crowded spaces. The following fact gives a sufficient condition for   $\Y(\ideal)$  to be crowded. 

\begin{lemma}\label{crowded}
Let $\ideal$ be an ideal on $\nat$. Then 
\begin{enumerate}
\item $\X$ is dense in $(2^{\cantor\times\nat},\rho_\ideal)$.

\item   $int_{\X(\ideal)} ( F')= \emptyset$, for all $F\in \ideal$ if, and only if,   $\Y$ is dense in $(2^{\cantor\times\nat},\rho_\ideal)$.

\item If $\ideal\subseteq \ideal^\star$, then $\Y$ is dense in $(2^{\cantor\times\nat},\rho_{\ideal})$.

\item If $\ideal^\star\subseteq \ideal$, then $\Y$ is dense in $(2^{\cantor\times\nat},\rho_{\ideal^\star})$.

\end{enumerate}   
\end{lemma}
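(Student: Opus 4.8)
The plan is to handle the four items in order, using Lemma~\ref{simetricdif} as the bridge between $\X$ and $\Y$ and item (2) as the engine for (3) and (4). Throughout, for a basic $\rho_\ideal$-open set $V$ I write $N_\varphi(V)=\{n\in\nat:\varphi_n\in V\}$ and $N_\psi(V)=\{n\in\nat:\psi_n\in V\}$, so that Lemma~\ref{simetricdif} reads $N_\varphi(V)\,\triangle\,N_\psi(V)\in\ideal$. Item (1) I would prove exactly as in the density part of Lemma~\ref{xnoq}: given a nonempty basic $V=\bigcap_{i=1}^m(\alpha_i,p_i)^+\cap\bigcap_{j=1}^n(\beta_j,q_j)^-$, choose $l$ large enough to separate all distinct points occurring among $\alpha_1,\dots,\alpha_m,\beta_1,\dots,\beta_n$ and put $\varphi=\bigcup_{i=1}^m[\alpha_i\restriction l]\times\{p_i\}\in\X$. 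Then $(\alpha_i,p_i)\in\varphi$ for each $i$, whereas $(\beta_j,q_j)\in\varphi$ would force $q_j=p_i$ and $\beta_j\restriction l=\alpha_i\restriction l$ for some $i$, contradicting either the choice of $l$ or the standing assumption that the positive and negative coordinates of $V$ are distinct; hence $\varphi\in V\cap\X$.

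Item (2) is the heart of the lemma, and I would prove both implications by contraposition. For the forward direction, if $\Y$ is not dense then $N_\psi(V)=\emptyset$ for some nonempty basic $V$; Lemma~\ref{simetricdif} gives $N_\varphi(V)=N_\varphi(V)\,\triangle\,N_\psi(V)\in\ideal$, and setting $F=N_\varphi(V)\in\ideal$ we get $V\cap\X=F'$, which is nonempty by item (1) and relatively open, so $int_{\X(\ideal)}(F')\neq\emptyset$. For the converse, suppose $int_{\X(\ideal)}(F')\neq\emptyset$ for some $F\in\ideal$, witnessed by a nonempty basic $V$ with $\emptyset\neq V\cap\X\subseteq F'$; then $N_\varphi(V)\subseteq F\in\ideal$, and Lemma~\ref{simetricdif} yields $N_\psi(V)\in\ideal$. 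The key trick is now to let $\gamma\in\cantor$ be the characteristic function of $N_\psi(V)$, so $\gamma\in\ideal$ and therefore $(\gamma,p)^-$ is a legitimate $\rho_\ideal$-subbasic set; recalling that $\psi_n(\gamma,p)=1$ whenever $n\in N_\psi(V)$, I would choose $p$ larger than every second coordinate appearing positively in $V$ and set $V'=V\cap(\gamma,p)^-$. This $V'$ is a nonempty basic open set (no positive/negative clash, by the choice of $p$) containing no $\psi_n$ at all: for $n\nin N_\psi(V)$ already $\psi_n\nin V$, and for $n\in N_\psi(V)$ we have $\psi_n\nin(\gamma,p)^-$. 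Hence $\Y$ is not dense.

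Items (3) and (4) then follow from (2) together with the definition of $\ideal^\star$. If $\ideal\subseteq\ideal^\star$, then every $F\in\ideal$ lies in $\ideal^\star$, so $F'$ is nowhere dense in $\X(\ideal)$ and in particular $int_{\X(\ideal)}(F')=\emptyset$; item (2) then gives (3). For (4) I would apply item (2) to the ideal $\ideal^\star$, for which it suffices to check $int_{\X(\ideal^\star)}(F')=\emptyset$ for every $F\in\ideal^\star$. Such an $F$ has $F'$ nowhere dense in $\X(\ideal)$, hence $int_{\X(\ideal)}(F')=\emptyset$; and since $\ideal^\star\subseteq\ideal$ the subbasis of $\rho_{\ideal^\star}$ is contained in that of $\rho_\ideal$, so $\rho_{\ideal^\star}$ is coarser and $int_{\X(\ideal^\star)}(F')\subseteq int_{\X(\ideal)}(F')=\emptyset$. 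Item (2) applied to $\ideal^\star$ now yields the density of $\Y$ in $(2^{\cantor\times\nat},\rho_{\ideal^\star})$.

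I expect the main obstacle to be the converse half of (2): converting the purely local fact that $F'$ has nonempty interior into a single basic open set that simultaneously misses every $\psi_n$. The difficulty is that $N_\psi(V)$ may be infinite, so one cannot exclude the offending $\psi_n$ one negative constraint at a time. The resolution is the observation that $N_\psi(V)\in\ideal$ (obtained from $N_\varphi(V)\subseteq F$ via Lemma~\ref{simetricdif}), which lets its characteristic function $\gamma$ serve as a legitimate $\rho_\ideal$-coordinate; the single constraint $(\gamma,p)^-$ then kills all the relevant $\psi_n$ at once, since $\psi_n(\gamma,p)=1$ whenever $n\in N_\psi(V)$. Checking that $V'$ remains nonempty is the only routine verification left.
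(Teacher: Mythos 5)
Your proof is correct and follows essentially the same route as the paper: item (1) by the explicit clopen-set construction, both directions of (2) via Lemma~\ref{simetricdif} together with the key trick of using the characteristic function of a set in $\ideal$ as a single negative subbasic constraint killing all relevant $\psi_n$ at once, and (3), (4) deduced from (2) exactly as in the paper (including the observation that $\rho_{\ideal^\star}$ is coarser than $\rho_\ideal$ when $\ideal^\star\subseteq\ideal$). The only cosmetic difference is that you argue the converse of (2) contrapositively with $\gamma$ the characteristic function of $\{n:\psi_n\in V\}$, whereas the paper reaches a contradiction using the slightly larger set $E=F\cup\{n:\varphi_n\nin V \text{ and } \psi_n\in V\}$; the underlying mechanism is identical.
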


\begin{proof}
(1) is clear. The {\em only if} part of (2) was shown in  \cite[Lemma 4.2]{Todoruzca2014}, but we include a  proof for the sake of completeness.  Let $V$ be a nonempty basic $\rho_\ideal$-open set. We need to find $n$ such that $\psi_n\in V$. From Lemma \ref{simetricdif} we have that 
\[
E=\{n\in\nat: \varphi_n\in V \text{ and } \psi_n\nin V\}\in \ideal.
\]
Since $int_{\X(\ideal)} ( E')=\emptyset$, there is $n$ such that $\varphi_n\in V$ and $n\nin E$.  Therefore $\psi_n\in V$.

For the {\em if} part, suppose that  $\Y$ is dense in $(2^{\cantor\times\nat},\rho_\ideal)$ and, towards a contradiction, that there is a nonempty basic $\rho_\ideal$-open set $V$ such that 
$F=\{n\in \nat:\; \varphi_n\in V\}$ belongs to $\ideal$. From this and Lemma \ref{simetricdif} the following set belongs to $\ideal$:
\[
E=F\cup \{n\in\nat: \varphi_n\nin V \text{ and }    \psi_n\in V\}.
\]
Let $\beta$ be the characteristic function of $E$. Since $V$ is a basic open set of the form \eqref{basicopen}, there is $m$ such that $V\cap (\beta, m)^-\neq\emptyset$.  Since $\Y$ is $\rho_\ideal$-dense, there is $n$ such that $\psi_n\in V\cap (\beta, m)^-$. Hence $\psi_n\nin (\beta, m)^+$ and, by the definition of $\psi_n$, we have that $\beta(n)=0$. Therefore $n\nin E$ and  $\psi_n\in V$, then $\varphi_n\in V$. Thus  $n\in F$,  a contradiction. 

(3)  follows immediately from (2). To see (4), it suffices to show that  $int_{\X(\ideal^\star)} ( F')= \emptyset$, for all $F\in \ideal^\star$. Let $F\in \ideal^\star$. By definition,  $F'$ is nowhere dense in $\X(\ideal)$. In particular $int_{\X(\ideal^\star)}( F')= \emptyset$, as $\ideal^\star\subseteq \ideal$.
\end{proof}

Now we show that the operation $\star$ is monotone. 

\begin{lemma}\label{IssinIs}
Let $\ideal$ and $\mathcal{J}$ be  ideals over $\nat$ with $\mathcal{J}\subseteq \ideal$. Then 
\begin{enumerate}
\item For every basic $\rho_\ideal$-open set $V$ of $2^{\cantor\times \nat}$ there are sets $W$, $U$ such that $V=W\cap U$,  $W$ is a  $\rho_{\mathcal{J}}$-open set and $U$ is a basic $\rho_\ideal$-open set which is also $\rho_{\mathcal{J}}$-dense. 

\item If $A\subseteq 2^{\cantor\times \nat}$ is $\rho_\mathcal{J}$-nowhere dense, then $A$ is 
$\rho_\mathcal{I}$-nowhere dense.

\item   $\mathcal{J}^\star\subseteq  \ideal^{\star}$.  Moreover, if $\mathcal{J}\subsetneq \ideal$, then $\mathcal{J}^\star\subsetneq  \ideal^{\star}$.
\end{enumerate}

\end{lemma}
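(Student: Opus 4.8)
The plan is to prove the three parts in order, with (2) reducing to (1) and (3) reducing to (2) together with Lemmas \ref{crowded} and \ref{DA}. Throughout I use that $\mathcal{J}\subseteq\ideal$ forces $\rho_{\mathcal{J}}\subseteq\rho_\ideal$, since the subbasic sets $(\alpha,p)^{\pm}$ generating $\rho_{\mathcal{J}}$ are among those generating $\rho_\ideal$ (passing to $\mathcal{J}$ only restricts the admissible first coordinates $\alpha$). The engine of the whole lemma is the decomposition in (1); once it is available, (2) and (3) are essentially bookkeeping. I expect the one genuinely substantive point to be the \emph{density} claim inside (1).

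For (1), I write the basic $\rho_\ideal$-open set as $V=\bigcap_{i=1}^m(\alpha_i,p_i)^+\cap\bigcap_{j=1}^n(\beta_j,q_j)^-$ and split its two index sets according to whether the first coordinate lies in $\mathcal{J}$ or in $\ideal\setminus\mathcal{J}$. I let $W$ be the intersection of those subbasic factors whose first coordinate is in $\mathcal{J}$ and $U$ the intersection of the remaining factors (first coordinate in $\ideal\setminus\mathcal{J}$), with the convention that an empty group yields the whole space. Then $V=W\cap U$ by construction, $W$ is $\rho_{\mathcal{J}}$-open, and $U$ is a basic $\rho_\ideal$-open set, nonempty because $V\subseteq U$. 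The only nontrivial assertion is that $U$ is $\rho_{\mathcal{J}}$-dense: given any nonempty basic $\rho_{\mathcal{J}}$-open set $V'$, every coordinate $V'$ constrains has first component in $\mathcal{J}$, whereas every coordinate constrained by $U$ has first component in $\ideal\setminus\mathcal{J}$. These two collections of coordinates of $2^{\cantor\times\nat}$ are disjoint, so the finitely many $0/1$-conditions defining $U$ and $V'$ are jointly consistent and hence $U\cap V'\neq\emptyset$. This coordinate-disjointness is exactly the crux of the lemma.

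For (2), let $A$ be $\rho_{\mathcal{J}}$-nowhere dense and let $V$ be an arbitrary nonempty basic $\rho_\ideal$-open set; I must produce a nonempty $\rho_\ideal$-open subset of $V$ disjoint from $A$. Using (1) I write $V=W\cap U$. Since $V\subseteq W$, the set $W$ is a nonempty $\rho_{\mathcal{J}}$-open set, so $\rho_{\mathcal{J}}$-nowhere density of $A$ furnishes a nonempty basic $\rho_{\mathcal{J}}$-open $W'\subseteq W$ with $W'\cap A=\emptyset$. Now $W'\cap U$ is $\rho_\ideal$-open, contained in $W\cap U=V$, disjoint from $A$, and nonempty: $U$ is $\rho_{\mathcal{J}}$-dense and $W'$ is a nonempty $\rho_{\mathcal{J}}$-open set, so they meet. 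This witnesses that $A$ is $\rho_\ideal$-nowhere dense.

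For (3), I first record that $\X$ is dense in $(2^{\cantor\times\nat},\rho_{\mathcal{A}})$ for every ideal $\mathcal{A}$ by Lemma \ref{crowded}(1); since a subset of a dense subspace is nowhere dense in the subspace if and only if it is nowhere dense in the ambient space, membership of $F$ in $\ideal^\star$ (resp. $\mathcal{J}^\star$) is equivalent to $F'$ being $\rho_\ideal$- (resp. $\rho_{\mathcal{J}}$-) nowhere dense in the whole product. The inclusion $\mathcal{J}^\star\subseteq\ideal^\star$ is then immediate from (2). For strictness, assuming $\mathcal{J}\subsetneq\ideal$, I pick $A\in\ideal\setminus\mathcal{J}$; as $\mathcal{J}$ contains all finite sets, $A$ is infinite, so $\mathbf{D}(A)$ is defined and equals $F'$ for $F=\{n:\varphi_n\in\mathbf{D}(A)\}$. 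Applying Lemma \ref{DA} to $\ideal$, the fact $A\in\ideal$ gives that $\mathbf{D}(A)=F'$ is nowhere dense in $\X(\ideal)$, so $F\in\ideal^\star$; applying it to $\mathcal{J}$, the fact $A\notin\mathcal{J}$ gives that $\mathbf{D}(A)$ is dense, hence not nowhere dense, in $\X(\mathcal{J})$, so $F\notin\mathcal{J}^\star$. Thus $F\in\ideal^\star\setminus\mathcal{J}^\star$, completing the argument. The only spot demanding care beyond (1) is this passage between nowhere density in the subspace $\X$ and in the whole product, which is legitimate precisely because $\X$ is dense in both topologies.
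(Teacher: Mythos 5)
Your proof is correct and follows essentially the same route as the paper: the same splitting of the subbasic factors of $V$ according to whether the first coordinate lies in $\mathcal{J}$ or $\ideal\setminus\mathcal{J}$, the same derivation of (2) from (1), and the same use of the density of $\X$ together with Lemma \ref{DA} applied to some $A\in\ideal\setminus\mathcal{J}$ for (3). You merely spell out two points the paper leaves implicit, namely the coordinate-disjointness argument for the $\rho_{\mathcal{J}}$-density of $U$ and the transfer of nowhere density between the dense subspace $\X$ and the ambient product.
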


\begin{proof}
(1) Let $V$ be  a basic open set, that is, 
\begin{equation}
\nonumber
V=\bigcap_{i=1}^{m} (\alpha_i,p_i)^{+} \cap \bigcap_{j=1}^{l} (\beta_j,q_j)^{-}.
\end{equation}
Notice that if every  $\alpha$ and $\beta$ belongs to $\ideal\setminus\mathcal{J}$, then $V$ is  $\rho_{\mathcal{J}}$-dense. Thus given such basic open set $V$ where every $\alpha$ and $\beta$ belongs to $\ideal$,  we can separate them and  form $W$ and $U$ as desired: For $W$, we use the $\alpha$'s and $\beta$'s belonging to $\mathcal J$ (put $W=2^{\cantor\times \nat}$ in case there is none in $\mathcal{J})$ and for $U$, we use the $\alpha$'s and $\beta$'s belonging to $\ideal\setminus\mathcal J$.

(2) Let $A\subseteq 2^{\cantor\times \nat}$ be a $\rho_\mathcal{J}$-nowhere dense set. Let $V$ be a basic $\rho_\ideal$-open set of $2^{\cantor\times \nat}$. Then $V=W\cap U$ where  $W$ and $U$ are as given by part (1).  As $A$ is $\rho_\mathcal{J}$-nowhere dense, there is a non empty $\rho_\mathcal{J}$-open set $W'\subseteq W$  such that $W'\cap A=\emptyset$. Since $W'$ is also $\rho_\ideal$-open and $U$ is $\rho_\mathcal{J}$-dense, then $U\cap W'$ is a non empty $\rho_\ideal$-open set disjoint from $A$ and contained in $V$. 

(3) Since $\X$ is dense in $2^{\cantor\times\nat}$, then $A\in \nwd(\X(\ideal))$ if, and only if, $A$ is nowhere dense in  $(2^{\cantor\times\nat},\rho_\ideal)$. From this and  (2) we immediately get that  $\mathcal{J}^\star\subseteq  \ideal^{\star}$.  Finally, notice that from Lemma \ref{DA}, we have that for $A\in \ideal\setminus \mathcal{J}$, the set $\mathbf{D}(A)$ is  nowhere dense in $\X(\ideal)$ and dense in $\X(\mathcal{J})$.
\end{proof}

Next result gives a sufficient condition for $\Y(\ideal^\star)$ to be  nodec. It is a generalization of a result from  \cite{Todoruzca2014}. 

\begin{lemma}  
\label{Lemanodec}
Let  $\ideal$ be an ideal over $\nat$ and $F\subseteq \nat$. 

\begin{enumerate}
\item   If  $F \in \mathcal{I}$, then  $\widehat{F}$ is closed discrete in $\Y(\mathcal{I})$.

\item Let $\ideal$ be such that $\ideal^{\star} \subseteq \ideal$. If $\widehat{F}$ is nowhere dense in $\Y(\ideal^{\star})$, then $F \in \ideal^{\star}$.
\item If $\ideal^{\star} \subseteq \ideal$, then $\Y(\ideal^{\star})$ is nodec.
\end{enumerate}
\end{lemma}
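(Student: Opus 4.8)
The plan is to prove the three parts in order, using the previously established machinery. The three parts are closely linked: part (3) is the payoff, and it will follow from parts (1) and (2) together with the characterization of nodec spaces. So the real work is in parts (1) and (2).

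For part (1), suppose $F\in\ideal$. I would show $\widehat{F}$ is closed discrete in $\Y(\ideal)$ by producing, for each $\psi_n\in\widehat{F}$, a basic $\rho_\ideal$-open neighborhood $V_n$ of $\psi_n$ that isolates it within $\widehat{F}$, and by showing $\widehat{F}$ has no accumulation points in $\Y(\ideal)$. The natural tool is Lemma \ref{simetricdif}: for any basic $\rho_\ideal$-open set $V$, the symmetric difference $\{n:\varphi_n\in V\}\triangle\{n:\psi_n\in V\}$ lies in $\ideal$. The point is that the extra coordinate in $\psi_n$ coming from $\{\alpha:\alpha(n)=1\}\times\nat$ lets one separate $\psi_n$ from the other $\psi_k$ with $k\in F$ using a $(\beta,m)^-$ with $\beta$ the characteristic function of a suitable subset of $F$ (which lies in $\ideal$ since $F\in\ideal$); concretely, for a fixed $n$ one wants $\beta(k)=1$ for the $k\in F$ one wishes to exclude. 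Since $F\in\ideal$, such $\beta$ are admissible as coordinates of $\rho_\ideal$-open sets, and this is exactly what forces discreteness and closedness.

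For part (2), assume $\ideal^\star\subseteq\ideal$ and suppose $\widehat{F}$ is nowhere dense in $\Y(\ideal^\star)$; I must conclude $F\in\ideal^\star$, i.e. that $F'$ is nowhere dense in $\X(\ideal)$. The strategy is the contrapositive together with the comparison between $F'$ and $\widehat{F}$. If $F\nin\ideal^\star$, then $F'$ is not nowhere dense in $\X(\ideal)$, so there is a basic $\rho_\ideal$-open $V$ in which $F'$ is dense; I would then transfer this to $\Y(\ideal^\star)$ using Lemma \ref{simetricdif} (applied with the ideal $\ideal^\star$, legitimate because $\ideal^\star\subseteq\ideal$ means $\rho_{\ideal^\star}$-basic open sets use coordinates from $\ideal^\star\subseteq\ideal$) to show that the sets $\{n:\varphi_n\in W\}$ and $\{n:\psi_n\in W\}$ differ only by a set in $\ideal^\star$, hence $\widehat{F}$ inherits enough density inside $V$ to contradict nowhere density. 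The key obstacle here is bookkeeping: I must combine Lemma \ref{IssinIs} (so that nowhere density in $\X(\ideal)$ versus $\X(\ideal^\star)$ is controlled) with Lemma \ref{simetricdif} to move between the $\varphi$ and $\psi$ pictures, and I must be careful that the symmetric-difference error set stays in $\ideal^\star$, not merely in $\ideal$. This is the step I expect to be the main difficulty.

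For part (3), I would argue that every nowhere dense subset of $\Y(\ideal^\star)$ is closed. Given $\widehat{F}$ nowhere dense in $\Y(\ideal^\star)$ (and noting any subset of $\Y$ has the form $\widehat{F}$ for some $F\subseteq\nat$), part (2) gives $F\in\ideal^\star$. But since $\ideal^\star\subseteq\ideal$, we have $F\in\ideal$, and then part (1) yields that $\widehat{F}$ is closed discrete, in particular closed, in $\Y(\ideal)$; one then checks this closedness descends to the coarser topology $\rho_{\ideal^\star}$ on $\Y$ (a closed set in the finer $\rho_\ideal$ need not be closed in the coarser $\rho_{\ideal^\star}$, so here I must instead re-run the part (1) separation argument directly in $\Y(\ideal^\star)$ using $F\in\ideal^\star$ as the admissible coordinates). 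This last subtlety — making sure the closedness is established in the correct topology $\rho_{\ideal^\star}$ rather than $\rho_\ideal$ — is the one I would watch most carefully, but it is resolved by observing that the proof of part (1) works verbatim with $\ideal$ replaced by $\ideal^\star$ once we know $F\in\ideal^\star$. Hence every nowhere dense $\widehat{F}$ is closed, so $\Y(\ideal^\star)$ is nodec.
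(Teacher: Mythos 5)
Your proposal is correct and follows essentially the same route as the paper: part (1) is the same computation with subbasic sets whose first coordinate is the characteristic function of (a subset of) $F\in\ideal$, exploiting that $\psi_k(\beta,m)=1$ whenever $\beta(k)=1$ (the paper phrases it as $\widehat{F}=\bigcap_m\{n:\psi_n\in(F,m)^+\}$ plus heredity of $\ideal$, you phrase it via the complementary isolating neighborhoods $(\beta,m)^-$); part (2) combines Lemma \ref{IssinIs} with Lemma \ref{simetricdif} while keeping the error set in $\ideal^{\star}$, exactly as the paper does; and in part (3) you correctly spot and resolve the only real subtlety --- that closedness must be obtained in the coarser topology $\rho_{\ideal^{\star}}$, which is achieved by applying part (1) with the ideal $\ideal^{\star}$ itself (legitimate since $\ideal^{\star}$ is an ideal), which is precisely what the paper's terse ``(3) follows from (1) and (2)'' implicitly does. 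The only cosmetic deviation is in part (2), where you run the transfer in the density direction (from $F'$ $\rho_\ideal$-dense in $V$ to $\widehat{F}$ somewhere dense in $\rho_{\ideal^{\star}}$, which is the content of the paper's later Lemma \ref{densostar}), whereas the paper argues by contradiction in the opposite direction, converting $\widehat{F}\cap W'=\emptyset$ into $F\cap\{n:\varphi_n\in W'\}\in\ideal^{\star}$; both are the same symmetric-difference computation, so this is not a genuinely different argument.
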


\begin{proof} 
(1) is Lemma 4.1 from  \cite{Todoruzca2014} we include the proof for the reader's convenience.

Since $\ideal$ is hereditary, it suffices to show that $\widehat{F}$ is closed for every $F\in \ideal$. Let $F\in\ideal$
and let $F$ denote also its  characteristic function.  Notice that for each $m\in \nat$,  if $C=\{n\in\nat:\; \psi_n\in (F,m)^+\}$, then
$\widehat{C}$ is closed in $\Y(\ideal)$. We claim that
\[
F=\bigcap_{m\in\nat}\{n\in\nat:\; \psi_n\in (F,m)^+\}.
\]
From this it follows that $\widehat{F}$ is closed in $\Y(\ideal)$.
To show the equality above,  let $n\in F$, then by the definition of $\psi_n$, we have that 
$\psi_n\in (F,m)^+$ for all $m\in\nat$. Conversely, suppose $n\nin F$ and let 
 $\varphi_n$ be $[s_1]\times \{m_1\}\cup\cdots \cup [s_k]\times
\{m_k\}$. Pick $m\not\in\{m_1,\cdots,m_k\}$, then
$\varphi_n\nin (F,m)^+$ and thus $\psi_n\nin (F,m)^+$ by the definition of $\psi_n$.

(2) is a generalization of  Lemma 4.3 of \cite{Todoruzca2014}.  Let  $\widehat{F}$ be  nowhere dense in $\Y(\ideal^\star)$ and suppose, towards a contradiction, that $F\nin \ideal^\star$. Let $V$ be a basic $\rho_{\ideal}$-open set such that $F'\cap V$ is $\rho_\ideal$-dense in $V$. 
By Lemma \ref{IssinIs}, there are sets $W$ and $U$ such that $V=W\cap U$, $W$ is a  $\rho_{\ideal^\star}$-open set, $U$ is a basic $\rho_\ideal$-open set and $U$ is also $\rho_{\ideal^\star}$-dense.  Since $\widehat{F}$ is nowhere dense in $\Y(\ideal^\star)$, there is a basic $\rho_{\ideal^\star}$-open set $W'\subseteq W$ such that $\widehat{F}\cap W'=\emptyset$, that is 
\[
F\cap\{n\in \nat:\; \psi_n\in W'\}=\emptyset.
\]
From Lemma \ref{simetricdif} we know that 
\[
\{n\in\nat: \varphi_n\in W' \} \setminus \{n\in\nat: \psi_n\in W'\}\in \ideal^\star.
\]
From this and the previous fact we get
\[
F\cap \{n\in \nat:\; \varphi_n\in W'\}\in \ideal^\star.
\]
This says that $F'\cap W'$ is nowhere dense in $\X(\ideal)$, which is a contradiction, as by construction, $F'\cap V$ is $\rho_\ideal$-dense in $V$ and $W'\cap U\subseteq V$ is a non empty $\rho_\ideal$-open set (it is non empty as $U$ is $\rho_\mathcal{\ideal^\star}$-dense). 

(3) follows immediately from (1) and (2). 
\end{proof}

The  natural bijection $\psi_n\mapsto \varphi_n$ is not continuous (neither is its inverse), however it has some form of semi-continuity as we show below. 

\begin{proposition}
Let $\ideal$ be an ideal over $\nat$. Let $\Gamma:\Y\to \X$ given  by $\Gamma (\psi_n)=\varphi_n$. Let $\alpha\in \ideal $ and $p\in \nat$.  Then $\Gamma^{-1} ((\alpha, p)^+\cap \X)$ is open in $\Y(\ideal)$. In general, if $V$ is a $\rho_\ideal$-basic open set, then there is  $D\subseteq \Y$  closed discrete in $\Y(\ideal)$ and an $\rho_\ideal$-open set $W$ such that  $\Gamma^{-1}(V\cap \X)= (W\cap \Y)\cup D$. 
\end{proposition}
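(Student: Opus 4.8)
The plan is to transfer the whole problem to the index sets and then apply Lemma~\ref{simetricdif} together with Lemma~\ref{Lemanodec}(1). Write a basic $\rho_\ideal$-open set as $V=\bigcap_{i=1}^{m}(\alpha_i,p_i)^+\cap\bigcap_{j=1}^{l}(\beta_j,q_j)^-$ with all $\alpha_i,\beta_j\in\ideal$, and set $A=\{n\in\nat:\varphi_n\in V\}$ and $B=\{n\in\nat:\psi_n\in V\}$. Since $\Gamma(\psi_n)=\varphi_n$ and $n\mapsto\psi_n$ is a bijection, these sets encode the objects in the statement: $\Gamma^{-1}(V\cap\X)=\widehat A$, while $V\cap\Y=\widehat B$ is open in $\Y(\ideal)$. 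Thus the claim reduces to comparing $\widehat A$ with the open set $\widehat B$.

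The key input is Lemma~\ref{simetricdif}, which gives $A\triangle B\in\ideal$; by heredity both $A\setminus B$ and $B\setminus A$ belong to $\ideal$. I would then decompose $\widehat A$ as the disjoint union $\widehat A=\widehat{A\cap B}\cup\widehat{A\setminus B}$. For the second piece, since $A\setminus B\in\ideal$, Lemma~\ref{Lemanodec}(1) shows that $D:=\widehat{A\setminus B}$ is closed discrete in $\Y(\ideal)$; this is exactly the discrete summand in the asserted decomposition.

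For the first piece, note $A\cap B=B\setminus(B\setminus A)$, so $\widehat{A\cap B}=\widehat B\setminus\widehat{B\setminus A}$. Here $\widehat B=V\cap\Y$ is open in $\Y(\ideal)$ and, since $B\setminus A\in\ideal$, Lemma~\ref{Lemanodec}(1) makes $\widehat{B\setminus A}$ closed in $\Y(\ideal)$; hence $\widehat{A\cap B}$ is open in the subspace $\Y(\ideal)$. By definition of the subspace topology there is a $\rho_\ideal$-open set $W\subseteq 2^{\cantor\times\nat}$ with $W\cap\Y=\widehat{A\cap B}$, and therefore $\Gamma^{-1}(V\cap\X)=\widehat A=(W\cap\Y)\cup D$, as required. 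The first assertion is then the special case $V=(\alpha,p)^+$: as $\psi_n\supseteq\varphi_n$ when viewed as clopen sets, $\varphi_n\in(\alpha,p)^+$ forces $\psi_n\in(\alpha,p)^+$, so $A\subseteq B$, whence $D=\emptyset$ and $\Gamma^{-1}((\alpha,p)^+\cap\X)=W\cap\Y$ is open.

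The only step requiring a word of care is the passage from ``$\widehat{A\cap B}$ is open in $\Y(\ideal)$'' to the existence of the ambient $W$: this is immediate from the subspace topology but must be stated explicitly, since the conclusion asks for $W$ to be $\rho_\ideal$-open in $2^{\cantor\times\nat}$ and not merely relatively open. Beyond this bookkeeping, the argument is routine once Lemmas~\ref{simetricdif} and~\ref{Lemanodec}(1) are invoked.
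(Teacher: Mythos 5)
Your proof is correct, and while it uses the same two pillars as the paper's proof --- closed discreteness of $\widehat{F}$ for $F\in\ideal$ (Lemma \ref{Lemanodec}(1)) and control of the discrepancy between the $\varphi_n$'s and the $\psi_n$'s --- it is organized in a genuinely different, and in fact more complete, way. The paper proves the positive subbasic case directly: it observes that $\{n:\psi_n\in(\alpha,p)^+\}\setminus\{n:\varphi_n\in(\alpha,p)^+\}$ is contained in $F=\alpha^{-1}(1)\in\ideal$, writes $\Gamma^{-1}((\alpha,p)^+\cap\X)=((\alpha,p)^+\cap\Y)\setminus\widehat{A}$ for some $A\subseteq F$, and concludes openness; for the general clause it only records the analogous identity for a negative subbasic set $(\alpha,p)^-$ (where the error term sits inside $\widehat{F}$ on the other side) and leaves the assembly over a finite intersection implicit. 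You instead invoke Lemma \ref{simetricdif} once for the whole basic set $V$, getting $A\triangle B\in\ideal$ for $A=\{n:\varphi_n\in V\}$, $B=\{n:\psi_n\in V\}$, and then split $\Gamma^{-1}(V\cap\X)=\widehat{A}$ as $\bigl(\widehat{B}\setminus\widehat{B\setminus A}\bigr)\cup\widehat{A\setminus B}$, with both error sets in $\ideal$ by heredity, hence closed discrete; the first assertion then falls out as the special case where $A\subseteq B$ (valid since $\varphi_n\subseteq\psi_n$ as clopen sets, so the discrete summand vanishes). What your route buys is a fully explicit, uniform proof of the ``in general'' clause that the paper merely sketches, at no real cost, since Lemma \ref{simetricdif} is itself proved by exactly the containments the paper verifies ad hoc for the subbasic cases. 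You were also right to flag the subspace-to-ambient step for $W$: it is immediate (one may take $W=V\setminus cl_{\rho_\ideal}(\widehat{B\setminus A})$, using that $\widehat{B\setminus A}$ is relatively closed in $\Y$), but the conclusion does demand an ambient $\rho_\ideal$-open set, so stating it explicitly is the correct bookkeeping.
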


\begin{proof}
Let $\alpha\in \ideal$ and $p\in\nat$.
Let $O=\{\psi_n:\; \varphi_n \in (\alpha, p)^+\}$. We need to show that $O$  is open in $\Y(\ideal)$.  Let $F=\alpha^{-1}(1)$. Since $((\alpha, p)^+\cap \Y)\setminus \widehat{F}\subseteq O\subseteq (\alpha, p)^+\cap \Y$, there is $A\subseteq F$ such that $O =((\alpha, p)^+\cap \Y)\setminus \widehat{A}$. As $A\in \ideal$, then by Lemma  \ref{Lemanodec}, $\widehat{A}$ is closed discrete in $\Y(\ideal)$. Thus $O$ is open in $\Y(\ideal)$. On the other hand, $\{\psi_n: \varphi_n\in (\alpha,p)^-\}= ((\alpha,p)^-\cap \Y)\cup ( \{\psi_n: \varphi_n\in (\alpha,p)^{-}\}\cap \widehat{F})$.
\end{proof}

The derivative operator on $\Y(\ideal)$ can be characterized as follows. 

\begin{proposition}
Let $\ideal$ be an ideal over $\nat$ and $A\subseteq \nat$. Then $\psi_l$ is a $\rho_\ideal$-accumulation point of $\widehat{A}$ if, and only if, for every non empty $\rho_{\ideal}$-open set $V$ with $\psi_l\in V$ we have
\[
\{n\in A:\;\varphi_n\in V\}\nin \ideal.
\]
\end{proposition}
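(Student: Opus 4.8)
The plan is to prove both implications by transferring membership in a basic $\rho_\ideal$-open set $V$ between the functions $\varphi_n$ and $\psi_n$ using Lemma \ref{simetricdif}, which guarantees that $\{n:\varphi_n\in V\}$ and $\{n:\psi_n\in V\}$ differ by a set in $\ideal$. Intersecting with $A$ and using that $\ideal$ is hereditary, this yields the key equivalence
\[
\{n\in A:\varphi_n\in V\}\nin\ideal \iff \{n\in A:\psi_n\in V\}\nin\ideal
\]
for every basic $\rho_\ideal$-open $V$. I will also use that the indexing $n\mapsto\psi_n$ is injective: if $n\neq m$, pick $\alpha$ with $\alpha(n)=1$, $\alpha(m)=0$ and a $k$ outside the finitely many second coordinates occurring in $\varphi_m$, so that $\psi_n(\alpha,k)=1\neq 0=\psi_m(\alpha,k)$. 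Hence an index set not in $\ideal$ maps to an infinite subset of $\Y$. Since being (or not being) an accumulation point is tested on basic neighborhoods, throughout it suffices to argue with basic $V$.

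For the implication from right to left, I would assume the displayed non-membership holds for every basic $V\ni\psi_l$. Fixing such a $V$, the equivalence above gives $\{n\in A:\psi_n\in V\}\nin\ideal$, so this index set is infinite; by injectivity its image is an infinite subset of $\widehat A$ contained in $V$, and therefore $V$ meets $\widehat A\setminus\{\psi_l\}$. As $V$ was an arbitrary basic neighborhood, $\psi_l$ is an accumulation point of $\widehat A$.

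The converse is the main obstacle, precisely because membership in $\ideal$ is weaker than finiteness, so one cannot simply count points: instead I will isolate $\psi_l$. Arguing contrapositively, suppose some $\rho_\ideal$-open $V\ni\psi_l$ has $\{n\in A:\varphi_n\in V\}\in\ideal$; passing to a basic neighborhood inside $V$ I may assume $V$ is basic, and the equivalence then gives $G:=\{n\in A:\psi_n\in V\}\in\ideal$. Set $\beta=\chi_{G\setminus\{l\}}$, which lies in $\ideal$, and choose $m$ outside the finitely many second coordinates appearing in $\varphi_l$. By the definition of $\psi_n$, each $n\in G\setminus\{l\}$ has $\beta(n)=1$, whence $\psi_n(\beta,m)=1$ and $\psi_n\nin(\beta,m)^-$, while $\beta(l)=0$ gives $\psi_l(\beta,m)=\varphi_l(\beta,m)=0$, so $\psi_l\in(\beta,m)^-$. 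Then $V':=V\cap(\beta,m)^-$ is a basic neighborhood of $\psi_l$ with $\{n\in A:\psi_n\in V'\}\subseteq\{l\}$, so $V'$ meets $\widehat A$ only at $\psi_l$ and $\psi_l$ is not an accumulation point of $\widehat A$. (Equivalently, since $G\in\ideal$, Lemma \ref{Lemanodec}(1) shows $\widehat G=\widehat A\cap V$ is closed discrete in $\Y(\ideal)$, which directly separates $\psi_l$ from the other points of $\widehat A$ lying in $V$.) This completes the plan.
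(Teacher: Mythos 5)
Your proof is correct, and it uses the same two ingredients as the paper: Lemma \ref{simetricdif} to transfer index sets between the $\varphi_n$'s and the $\psi_n$'s, and (the content of) Lemma \ref{Lemanodec}(1) to isolate $\psi_l$. Your right-to-left direction coincides with the paper's ``Conversely'' paragraph: from $\{n\in A:\varphi_n\in V\}\nin\ideal$ and the set $E$ of Lemma \ref{simetricdif} one gets $\{n\in A:\psi_n\in V\}\nin\ideal$, hence infinite since ideals contain all finite sets; your injectivity check for $n\mapsto\psi_n$, which the paper takes for granted, is correct and is genuinely needed to conclude that $V$ meets $\widehat{A}\setminus\{\psi_l\}$. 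In the forward direction you diverge in a way worth noting. The paper sets $F=\{n\in A:\varphi_n\in V\}\in\ideal$, applies Lemma \ref{Lemanodec}(1) to get $\widehat{F}$ closed discrete, and declares a contradiction ``as $\psi_l$ is an accumulation point of $\widehat{F}$'' --- but accumulation on $\widehat{A}$ only localizes to accumulation on $\widehat{A}\cap V=\widehat{G}$ with $G=\{n\in A:\psi_n\in V\}$, and passing from $F$ to $G$ needs one more application of Lemma \ref{simetricdif} together with heredity of $\ideal$, a step the paper elides. You perform exactly this transfer before isolating $\psi_l$, so your write-up makes explicit (and thereby repairs) that loose step. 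Your explicit separating neighborhood $V\cap(\beta,m)^-$ with $\beta=\chi_{G\setminus\{l\}}$ is in effect a hand re-derivation of the relevant case of Lemma \ref{Lemanodec}(1) --- it is the same ``choose $m$ avoiding the second coordinates of $\varphi_l$'' trick used in that lemma's proof --- while your parenthetical remark, applying the lemma directly to $\widehat{G}=\widehat{A}\cap V$, is the streamlined version and is how the paper's argument should be read. In short: same architecture and same key lemmas, with your forward direction being slightly more elementary and slightly more careful than the paper's.
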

\begin{proof} 
Let $V$ be a $\rho_{\ideal}$-open set with $\psi_l\in V$.  Suppose $F=\{n\in A:\;\varphi_n\in V\}\in \ideal$. Then by Lemma \ref{Lemanodec}, 
$\widehat{F}$ is closed discrete in $\Y(\ideal)$ which is a contradiction  as $\psi_l$ is an accumulation point of $\widehat{F}$. 
Conversely, let $V$ be a basic $\rho_\ideal$-open set containing $\psi_l$.  By Lemma  \ref{simetricdif} the following set
belongs to $\ideal$:
\[
E=\{n\in\nat: \varphi_n\in V \text{ and } \psi_n\nin V\}.
\]
We also have 
\[
F=\{n\in A:\;\varphi_n\in V\}\subseteq \{n\in A: \varphi_n\in V \text{ and } \psi_n\in V\}\cup E.
\]
Since $E\in \ideal$ and by hypothesis $F\nin \ideal$, then there are infinitely many  $n\in A$ such that $\psi_n\in V$ and we are done. 

\end{proof}

Now we show that the spaces $\X(\ideal)$ and $\Y(\ideal)$ are not homeomorphic in general. 

\begin{proposition}
Let $\ideal$ be a tall ideal over $\nat$. There are no non trivial convergent sequences in $\Y(\ideal)$. In particular, $\Y(\ideal)$ is not homeomorphic to $\X(\ideal)$.
\end{proposition}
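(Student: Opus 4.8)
The plan is to argue by contradiction, using tallness to force any candidate convergent sequence into a member of $\ideal$, and then to invoke Lemma \ref{Lemanodec}(1), which guarantees that $\widehat{F}$ is closed and discrete in $\Y(\ideal)$ whenever $F\in\ideal$. The elementary principle behind the whole argument is that a closed discrete subset of \emph{any} topological space admits no injective convergent sequence: the limit of a sequence lying in a closed set must belong to that set, yet in a discrete set every point is isolated, so it cannot be approached by infinitely many distinct points of the set.

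First I would set up the contradiction. Suppose $\Y(\ideal)$ has a nontrivial convergent sequence; passing to a subsequence I may assume its terms $\psi_{n_k}$ are pairwise distinct (so the indices $n_k$ are distinct) and that $\psi_{n_k}\to\psi_l$ for some $l$. Put $A=\{n_k:k\in\nat\}$, an infinite set. Here is where tallness enters: since $\ideal$ is tall, I can select an infinite $B\subseteq A$ with $B\in\ideal$. Because $B\subseteq A$ is infinite, the family $\{\psi_n:n\in B\}$ is an infinite injective subsequence of $(\psi_{n_k})_k$, and hence it too converges to $\psi_l$.

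Next I would extract the contradiction from Lemma \ref{Lemanodec}(1). Since $B\in\ideal$, the set $\widehat{B}$ is closed and discrete in $\Y(\ideal)$. As the subsequence $(\psi_n)_{n\in B}$ lies in the closed set $\widehat{B}$ and converges to $\psi_l$, its limit satisfies $\psi_l\in\widehat{B}$. Discreteness of $\widehat{B}$ then yields a $\rho_\ideal$-open set $U\ni\psi_l$ with $U\cap\widehat{B}=\{\psi_l\}$, so convergence forces $\psi_n=\psi_l$ for all but finitely many $n\in B$, contradicting the fact that the terms are pairwise distinct. This establishes that $\Y(\ideal)$ has no nontrivial convergent sequence. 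For the final clause I would simply invoke Proposition \ref{converseque}: every point of $\X(\ideal)$ is the limit of a nontrivial convergent sequence, and since a homeomorphism sends an injective convergent sequence to an injective convergent sequence, $\X(\ideal)$ and $\Y(\ideal)$ cannot be homeomorphic.

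The argument is short, so there is no serious computational obstacle; the one point requiring genuine insight is the use of tallness. An arbitrary convergent sequence need not be controllable, but tallness lets me trap an infinite portion of its index set inside $\ideal$, thereby converting the problem into the trivial observation about closed discrete sets supplied by Lemma \ref{Lemanodec}(1). I would therefore present that reduction as the heart of the proof.
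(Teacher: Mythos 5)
Your proposal is correct and takes essentially the same route as the paper: tallness yields an infinite $B\subseteq A$ with $B\in\ideal$, Lemma \ref{Lemanodec}(1) makes $\widehat{B}$ closed discrete (hence no injective sequence in it can converge), and Proposition \ref{converseque} supplies the non-trivial convergent sequences in $\X(\ideal)$ that rule out a homeomorphism. The only difference is cosmetic: you spell out the elementary step that a closed discrete set admits no injective convergent sequence, which the paper leaves implicit.
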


\begin{proof}
Let $A\subseteq \nat$ be an infinite set. We will show that  $\widehat{A}=\{\psi_n:\; n\in A\}$ is not convergent in $\Y(\ideal)$.  Since $\ideal$ is tall, pick $B\subseteq A$ infinite  with $B\in \ideal$.   Then $\widehat{B}$ is closed discrete in $\Y(\ideal)$ (by Lemma \ref{Lemanodec}).  Thus $\widehat{A}$ is not convergent.
From this, the last claim follows since $\X(\ideal)$ has plenty of convergent sequences (see Proposition \ref{converseque}).
\end{proof}

Next result shows that our spaces are analytic. 

\begin{lemma}
\label{staranalytic}
Let $\ideal$ be an analytic ideal over $\nat$. Then $\ideal^{\star}$ is analytic.
\end{lemma}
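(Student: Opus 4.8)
The plan is to exhibit $\ideal^\star$ as the projection of a Borel set. Since $\ideal$ is analytic, fix a continuous surjection $g\colon\baire\twoheadrightarrow\ideal$. A basic $\rho_\ideal$-open set $V=\bigcap_{i\le m}(\alpha_i,p_i)^+\cap\bigcap_{j\le n}(\beta_j,q_j)^-$ is coded by a tuple $\sigma=(\bar x,\bar y,\bar p,\bar q)$ with $\alpha_i=g(x_i)$ and $\beta_j=g(y_j)$; these tuples form a Polish space $\mathcal C$, and I write $V_\sigma$ for the corresponding open set. First I would check that the three basic relations on codes are Borel. The relation $\mathrm{In}(n,\sigma)\equiv[\varphi_n\in V_\sigma]$ is clopen in $\sigma$ for each fixed $n$, because $\varphi_n$ is a finite union of rectangles $[s]\times\{r\}$, so $\varphi_n(g(x_i),p_i)$ depends continuously on $x_i$; the relation $\mathrm{NE}(\sigma)\equiv[V_\sigma\neq\emptyset]$ is open, since it asserts $(g(x_i),p_i)\neq(g(y_j),q_j)$ for all $i,j$ and $\{(x,y):g(x)=g(y)\}$ is closed; and $\mathrm{Sub}(\sigma',\sigma)\equiv[V_{\sigma'}\subseteq V_\sigma]$ is closed, since for nonempty sets it reduces to the demand that every condition of $\sigma$ occur among the conditions of $\sigma'$, a finite Boolean combination of the closed relations $g(x)=g(x')$ and $p=p'$.

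Next I would reduce nowhere density to the existence of a dense open set. By Lemma \ref{crowded}(1), $\X$ is dense in $(2^{\cantor\times\nat},\rho_\ideal)$, so $F'$ is nowhere dense in $\X(\ideal)$ iff $\X\setminus\overline{F'}$ is $\rho_\ideal$-dense; hence $F\in\ideal^\star$ iff there is a $\rho_\ideal$-open $O\subseteq\X$ with $O\cap F'=\emptyset$ that is $\rho_\ideal$-dense (take $O=\X\setminus\overline{F'}$). Because $\X$ is countable, every open $O$ is a countable union $O=\bigcup_k\bigl(V_{\tau_k}\cap\X\bigr)$, so $O$ is coded by a single point $(\tau_k)_k$ of the Polish space $\mathcal C^{\nat}$. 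With this coding, ``each $V_{\tau_k}$ is a basic open set'' and ``$O\cap F'=\emptyset$''—the latter being $\forall n\,\forall k\,(n\in F\to\neg\mathrm{In}(n,\tau_k))$—are Borel in $((\tau_k)_k,F)$, while ``$O$ is open'' is automatic. Thus, modulo the density clause,
\[
\ideal^\star=\bigl\{F:\ \exists(\tau_k)_k\ \bigl[\ \text{Borel}\ \wedge\ O_{(\tau_k)}\text{ is }\rho_\ideal\text{-dense}\ \bigr]\bigr\},
\]
and it remains only to write the density clause analytically.

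The main obstacle is exactly this clause: unravelled, ``$O$ is dense'' reads $\forall\sigma\,\bigl(\mathrm{NE}(\sigma)\to\exists k\,\exists n\,(\mathrm{In}(n,\tau_k)\wedge\mathrm{In}(n,\sigma))\bigr)$, a universal quantifier over the whole space $\mathcal C$ of basic open sets. This cannot be replaced by a countable subfamily: the topology $\rho_\ideal$ is the initial topology of the uncountably many independent coordinate maps indexed by $\alpha\in\ideal$, so for $\ideal\neq\fin$ the space $\X(\ideal)$ has uncountable $\pi$-weight and admits no countable $\pi$-base against which to test density. To overcome this I would uniformize. When $O$ is dense, the assignment $\sigma\mapsto\Phi(\sigma):=\min\{n:\varphi_n\in O\text{ and }\mathrm{In}(n,\sigma)\}$ is a total continuous map $\{\sigma:\mathrm{NE}(\sigma)\}\to\nat$, its fibres being finite Boolean combinations of the clopen sets $\{\sigma:\mathrm{In}(n,\sigma)\}$, and conversely any such continuous selector certifies density. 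For a continuous $\Phi$ the predicate $\neg\mathrm{NE}(\sigma)\vee\bigl(\mathrm{In}(\Phi(\sigma),\sigma)\wedge\varphi_{\Phi(\sigma)}\in O\bigr)$ defines a \emph{closed} subset of $\mathcal C$, so ``$O$ is dense'' becomes ``$\forall\sigma\in\mathcal C_0$ [that predicate]'' for a fixed countable dense $\mathcal C_0\subseteq\mathcal C$, which is Borel; quantifying $O$ and $\Phi$ existentially then displays $\ideal^\star$ as $\Sigma^1_1$, whence $\X(\ideal^\star)$ and $\Y(\ideal^\star)$ have analytic topologies by Lemma \ref{complexity}. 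The delicate point, where I expect the real work to lie, is to carry the existential over the selector $\Phi$ without paying a coanalytic ``totality'' price: one should code $\Phi$ by a monotone tree map, so that every code automatically yields a total continuous function, or else invoke a uniformization theorem to replace $\Phi$ by a Borel selector, keeping the outer existential first order over a Polish space so that the entire description remains analytic.
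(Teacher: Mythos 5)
There is a genuine gap, and it sits exactly where you flag it: the quantifier over total continuous selectors $\Phi$. Your closed-predicate observation is sound as far as it goes --- for a \emph{total} continuous $\Phi$ on the open set $\mathrm{NE}\subseteq\mathcal C$ (automatically with clopen fibres, since $\nat$ is discrete), the predicate is closed and hence its validity can be tested on a countable dense $\mathcal C_0$. But to keep the outer existential analytic you need a Polish (or at least analytic) parametrization of the admissible $\Phi$'s, and neither of your proposed repairs supplies one. Monotone tree maps code \emph{partial} continuous functions with $G_\delta$ domains; the requirement that the domain cover the prescribed open set $\mathrm{NE}$ is a totality condition of the form ``a countable union of basic open sets covers a fixed open subset of a non-compact Polish space,'' which is $\Pi^1_1$ (indeed $\Pi^1_1$-complete in general: for $\baire$ it amounts to well-foundedness of the tree of the complement). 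The same covering obstruction reappears if you code $\Phi$ by values on $\mathcal C_0$ together with moduli of local constancy. The alternative of invoking uniformization to pass to a Borel selector is worse on both counts: ``being a Borel code'' is itself $\Pi^1_1$, and the dense-test trick collapses for discontinuous selectors, since a Borel predicate true on a dense set need not be true everywhere. As written, your description of $\ideal^\star$ is therefore only $\Sigma^1_2$, not $\Sigma^1_1$.

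The paper sidesteps the selector machinery entirely with a different key lemma (Lemma 4.7 of \cite{Todoruzca2014}): for any index set $J$, a set $M\subseteq 2^J$ is nowhere dense if, and only if, there is a \emph{countable} $C\subseteq J$ such that $M\restriction C=\{x\restriction C:x\in M\}$ is nowhere dense in $2^C$. Since $\X$ is $\rho_\ideal$-dense, nowhere density of $F'$ in $\X(\ideal)$ reduces to nowhere density in the cube $2^{\ideal\times\nat}$, and the lemma converts your problematic universal quantifier over all of $\mathcal C$ into an existential over injective sequences $z\in(\cantor\times\nat)^{\nat}$ enumerating a countable $C\subseteq\ideal\times\nat$; the condition ``$z(k)\in\ideal\times\nat$ for all $k$'' is a countable conjunction of analytic conditions, and ``$\{\varphi_n\restriction C:n\in F\}$ is nowhere dense in $2^C$'' is Borel in $(F,z)$ because $2^C$ is second countable, so the classical basis computation applies. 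Thus $F\in\ideal^\star$ iff $\exists z\,R(F,z)$ with $R$ analytic, which is the whole proof. Note that this lemma also proves the statement your approach was implicitly trying to establish --- that density of a coded open $O$ is analytic, since $O$ is dense iff its closed complement is nowhere dense --- so your argument would be repaired, and drastically shortened, by proving or citing this countable-support characterization in place of the selector construction.
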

	
\begin{proof}
The argument is analogous to that of the Lemma 4.8 of \cite{Todoruzca2014}. We include a sketch of it for the sake of completeness.  First, we recall a result from \cite{Todoruzca2014} (see Lemma 4.7).

\bigskip

\noindent{\em Claim:} Let $J$  be an infinite set. Then  $M \subseteq 2^{J}$ is nowhere dense if, and only if, there is $C \subseteq J$ countable  such that $M\restriction C=\{ x \restriction C: x \in M \}$ is nowhere dense in  $2^C$ 

\bigskip

Let $Z$  be the set of all  $z \in (\cantor \times \nat)^{\nat}$ such that  $z(k) \neq z(j)$ for all  $k \neq j$ and  $\{z(k): k \in \nat \} \subseteq \ideal \times \nat$. Since $\ideal$ is an analytic set, then  $Z$ is an  analytic subset of  $(\cantor \times \nat)^{\nat}$.
		
Consider the following relation $R \subseteq \mathcal{P}(\nat) \times (\cantor \times \nat)^{\nat}$:
$$
(F,z) \in R \Leftrightarrow \; z\in Z \;\mbox{and }\; \{\varphi_n \restriction \{z(k): k \in \nat \} : n \in F \} \text{ is nowhere dense in  }2^{ \{z(k):\; k \in \nat \} }.
$$
Then $R$ is an analytic set.
From the claim above,  we have
$$ 
F \in \ideal^{\star} \Leftrightarrow (\exists z \in (\cantor \times \nat)^{\nat}) R(F,z).
$$
Thus, $\ideal^{\star}$ is analytic.
\end{proof}

Finally, we can show one of our main results. 
Let us define a sequence $(\ideal^k)_{k \in \nat}$ of ideals on $\nat$ as follows:

$$\ideal^k=\left\{
\begin{array}{cl}
\cantor, & \text{if }k=0,  \\
(\ideal^{k-1})^{\star}, & \text{if }k>0. \\
\end{array}
\right.$$

Notice that $\ideal^{k+1} \subsetneq \ideal^{k}$ for each $k \in \nat$ by  Lemma \ref{IssinIs}.  

\begin{theorem}\label{Ykesnod}
For all $k>0$, $\Y(\ideal^k)$ is analytic, nodec and crowded.
\end{theorem}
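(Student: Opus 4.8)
The plan is to establish the three properties separately, noting that analyticity and nodecness are almost immediate from the results already in place, while crowdedness is where the actual argument lies. First I would show by induction on $k$ that $\ideal^k$ is an analytic subset of $\cantor$: the base case $\ideal^0=\cantor$ is trivially analytic, and if $\ideal^{k-1}$ is analytic then $\ideal^k=(\ideal^{k-1})^\star$ is analytic by Lemma \ref{staranalytic}. With $\ideal^k$ analytic, Lemma \ref{complexity} gives at once that $\Y(\ideal^k)$ has an analytic topology. For nodecness I would use the strict chain $\ideal^{k}\subsetneq\ideal^{k-1}$ recorded before the statement; in particular $(\ideal^{k-1})^\star=\ideal^k\subseteq\ideal^{k-1}$, so Lemma \ref{Lemanodec}(3), read with $\ideal^{k-1}$ in the role of $\ideal$, says precisely that $\Y((\ideal^{k-1})^\star)=\Y(\ideal^k)$ is nodec. (For $k=1$ one may instead quote Theorem \ref{yind}, since $\ideal^1=(\cantor)^\star=\ideal_{nd}$.)

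The core of the proof is to show that $\Y(\ideal^k)$ has no isolated point. Since the basic $\rho_{\ideal^k}$-open sets form a base, it suffices to prove that every nonempty basic $\rho_{\ideal^k}$-open set $V$ contains infinitely many $\psi_n$; then no $\psi_l\in V$ can be isolated. Writing $P=\{n:\varphi_n\in V\}$ and $Q=\{n:\psi_n\in V\}$, I would first argue that $P\nin\ideal^k$. Indeed, since $\ideal^k\subseteq\ideal^{k-1}$ the topology $\rho_{\ideal^k}$ is coarser than $\rho_{\ideal^{k-1}}$, so $V$ is also $\rho_{\ideal^{k-1}}$-open; and since $\X$ is $\rho_{\ideal^k}$-dense by Lemma \ref{crowded}(1), the set $P'=\X\cap V$ is a nonempty $\rho_{\ideal^{k-1}}$-open subset of $\X$, hence is not nowhere dense in $\X(\ideal^{k-1})$. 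By the definition of the operation $\star$ this says exactly that $P\nin(\ideal^{k-1})^\star=\ideal^k$. Now Lemma \ref{simetricdif}, applied to the ideal $\ideal^k$, gives $P\triangle Q\in\ideal^k$; since $\ideal^k$ is an ideal, $P\nin\ideal^k$ forces $Q\nin\ideal^k$, and since $\ideal^k$ is free this makes $Q$ infinite. Hence $V$ meets $\Y$ in infinitely many points, and crowdedness follows.

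I expect the only delicate point to be the step $P\nin\ideal^k$, which quietly uses $\ideal^k\subseteq\ideal^{k-1}$ twice: once to realize $V$ as open in the finer topology $\rho_{\ideal^{k-1}}$, and once to translate ``not nowhere dense in $\X(\ideal^{k-1})$'' into ``$P\nin(\ideal^{k-1})^\star$''. Everything else is routine bookkeeping with Lemmas \ref{complexity}, \ref{simetricdif}, \ref{crowded}, \ref{staranalytic} and \ref{Lemanodec}, together with the previously recorded strictness of the chain $(\ideal^k)_k$.
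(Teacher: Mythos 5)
Your proposal is correct, and its overall skeleton matches the paper's proof: analyticity by induction using Lemma \ref{staranalytic} followed by Lemma \ref{complexity}, and nodecness from Lemma \ref{Lemanodec}(3) applied with $\ideal^{k-1}$ in the role of $\ideal$, via the inclusion $\ideal^{k}\subseteq\ideal^{k-1}$ furnished by Lemma \ref{IssinIs}; your aside that the case $k=1$ can be covered by Theorem \ref{yind} is a careful touch, since $\ideal^{0}=\cantor$ is not literally an ideal and so Lemmas \ref{IssinIs} and \ref{Lemanodec} do not formally apply at the first step (the paper glosses over this as well; the proofs do go through verbatim there). Where you diverge is crowdedness: the paper disposes of it in one line by citing Lemma \ref{crowded}, using $\ideal^{k}\subseteq\ideal_{nd}$ to verify the hypothesis of clause (2) (equivalently one can invoke clause (4) with $\ideal=\ideal^{k-1}$, which is in effect what you do), whereas you inline the argument: your computation that $P=\{n:\varphi_n\in V\}\nin(\ideal^{k-1})^{\star}=\ideal^{k}$, combined with $P\triangle Q\in\ideal^{k}$ from Lemma \ref{simetricdif}, is exactly the mechanism hidden inside the proof of Lemma \ref{crowded}(2)/(4). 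Your unfolded version buys two small things the paper leaves implicit: it makes explicit the bridge from ``$\Y$ is dense in $(2^{\cantor\times\nat},\rho_{\ideal^{k}})$'' to ``$\Y(\ideal^{k})$ has no isolated points'' (the paper's one-line citation silently uses that a dense subset of this crowded ambient space is crowded), and it yields the slightly stronger conclusion that every nonempty basic $\rho_{\ideal^{k}}$-open set meets $\Y$ in an $\ideal^{k}$-positive, hence (by freeness of $\ideal^{k}$) infinite, set. Both routes rest on the same two ingredients, Lemma \ref{simetricdif} and the fact that sets in the ideal have empty-interior traces on $\X$, so this is a correct proof by essentially the paper's method, with the crowdedness step reproved rather than quoted.
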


\begin{proof}
That $\Y(\ideal^k)$ is  analytic and nodec follows from  Lemmas  \ref{staranalytic}, \ref{complexity}, \ref{Lemanodec} and \ref{IssinIs}.
Since $\ideal^k\subseteq \ideal_{nd}$, then by Lemma \ref{crowded}, $\Y(\ideal^k)$ is crowded.
\end{proof}

\bigskip

Thus we do not know whether $\Y(\ideal^\star)$ is nodec for ideals such that $\ideal^\star \not\subseteq \ideal$. The reason is that it is not clear if part (2) in Lemma \ref{Lemanodec} holds in general without the assumption that $\ideal^\star\subseteq \ideal$. In this respect, we only were able to show the following. 

\begin{lemma}
Let $\ideal$ be an ideal on $\nat$ such that  $\ideal\subseteq \ideal^\star$.  Let $A\subseteq \nat$. Then 

\begin{enumerate}
\item Let $V$ be a non empty $\rho_\ideal$-open set. If $A'$ is $\rho_\ideal$-dense in  $V$, then  $\widehat{A}$ is  $\rho_\ideal$-dense in  $V$.
    
\item  If $\widehat{A}$ is nowhere dense in $\Y(\ideal)$, then $A'$ is nowhere dense in $\X(\ideal)$ (i.e., $A
\in \ideal^\star$).
In particular,  if $\widehat{A}$ is nowhere dense in $\Y(\ideal)$, then $\widehat{A}$ is closed discrete in $\Y(\ideal^\star)$.
\end{enumerate}
\end{lemma}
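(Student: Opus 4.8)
The plan is to prove both parts by contraposition, the engine in each case being Lemma \ref{simetricdif} (for a basic $\rho_\ideal$-open $W$ the index sets $\{n:\varphi_n\in W\}$ and $\{n:\psi_n\in W\}$ differ by a set in $\ideal$) together with the standing hypothesis $\ideal\subseteq\ideal^\star$.

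For part (1) I would assume $\widehat{A}$ is \emph{not} $\rho_\ideal$-dense in $V$ and derive a contradiction. Then there is a nonempty basic $\rho_\ideal$-open $W\subseteq V$ with $\widehat{A}\cap W=\emptyset$, i.e. no $n\in A$ satisfies $\psi_n\in W$. By Lemma \ref{simetricdif} the set $E=\{n:\varphi_n\in W\}\setminus\{n:\psi_n\in W\}$ belongs to $\ideal$; since every $n\in A$ with $\varphi_n\in W$ automatically has $\psi_n\notin W$, the set $F:=\{n\in A:\varphi_n\in W\}$ is contained in $E$, hence $F\in\ideal$. The hypothesis $\ideal\subseteq\ideal^\star$ now gives $F\in\ideal^\star$, so by the definition of $\ideal^\star$ the set $F'=A'\cap W$ is nowhere dense in $\X(\ideal)$. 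On the other hand, density of $A'$ in $V$ together with $W\subseteq V$ forces $A'\cap W$ to be $\rho_\ideal$-dense in the nonempty open set $W\cap\X$, so $F'$ cannot be nowhere dense — the desired contradiction.

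For part (2) I would again argue contrapositively: suppose $A'$ is not nowhere dense in $\X(\ideal)$, so $A\notin\ideal^\star$ and there is a nonempty $\rho_\ideal$-open set $V$ in which $A'$ is $\rho_\ideal$-dense (take $V$ whose trace lies in the interior of $\overline{A'}$). Part (1) then gives that $\widehat{A}$ is $\rho_\ideal$-dense in $V$. Because $\ideal\subseteq\ideal^\star$, Lemma \ref{crowded}(3) ensures $\Y$ is $\rho_\ideal$-dense, so $V\cap\Y$ is a nonempty open subset of $\Y$ in which $\widehat{A}$ is dense; hence $\widehat{A}$ is not nowhere dense in $\Y(\ideal)$, contradicting the hypothesis. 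Thus $A\in\ideal^\star$. The final clause is then immediate: applying Lemma \ref{Lemanodec}(1) with the ideal $\ideal^\star$ in place of $\ideal$ and $A\in\ideal^\star$ shows $\widehat{A}$ is closed discrete in $\Y(\ideal^\star)$.

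The main obstacle I anticipate is the bookkeeping between ambient density in $V$ (as a subset of $2^{\cantor\times\nat}$) and (nowhere-)density inside the subspaces $\X$ and $\Y$. Concretely, one must check that $A'\cap W$ dense in $W$ yields $A'\cap W$ dense in the relatively open set $W\cap\X$ of $\X(\ideal)$, so that ``nowhere dense in $\X(\ideal)$'' is genuinely violated, and symmetrically that density of $\widehat{A}$ in $V$ descends to $V\cap\Y$. Both reductions rest on $\X$ and $\Y$ being $\rho_\ideal$-dense (Lemma \ref{crowded}(1) and (3)), which is exactly the second place where $\ideal\subseteq\ideal^\star$ is needed. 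The only remaining routine point is that the identity $A'\cap W=F'$ holds at the level of sets whether or not the enumeration $\{\varphi_n\}$ is injective.
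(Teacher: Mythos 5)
Your proposal is correct and is essentially the paper's own argument: part (1) is the paper's proof via Lemma \ref{simetricdif} together with $\ideal\subseteq\ideal^\star$ (the paper finds $n\in A\setminus E$ with $\varphi_n\in W$ directly, whereas you run the same computation in contrapositive form via $F\subseteq E$), and your handling of the ambient-versus-subspace density bookkeeping through the density of $\X$ and $\Y$ is sound. Part (2) is deduced exactly as in the paper, from part (1) together with Lemma \ref{Lemanodec}(1) applied to the ideal $\ideal^\star$.
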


\begin{proof}
(1) Let $V$ be a  non empty $\rho_\ideal$-open set and suppose  $A'$ is $\rho_\ideal$-dense in $V$. Let $W$ be a  basic $\rho_\ideal$-open set with $W\subseteq V$. We need to find $n\in A$ such that  $\psi_n\in W$. 
By Lemma  \ref{simetricdif} the following set
belongs to $\ideal$:
\[
E=\{n\in\nat: \varphi_n\in W \text{ and } \psi_n\nin W\}.
\]
As $\ideal\subseteq \ideal^\star$, then $E'$ is nowhere dense in $\X(\ideal)$. Since $A'$ is dense in $V$, then $A'\cap W\not\subseteq E'$. 
Let $n\in A\setminus E$ such that $\varphi_n\in W$. As $n\nin E$, then $\psi_n\in W$. 

(2) Follows from (1)   and part (1) in Lemma \ref{Lemanodec}. 
\end{proof}

Now we compare the dense sets in $\Y(\ideal)$ and $\X(\ideal)$. 

\begin{lemma}
\label{denytodenxstar}
Let $\ideal$ be an ideal on $\nat$ such that $\Y$ is dense in $(2^{\cantor\times\nat},\rho_\ideal)$ and $D\subseteq \nat$. If $\widehat{D}$ is dense in $\Y(\ideal)$, then  $D'$ is  dense in $\X(\ideal)$. 
\end{lemma}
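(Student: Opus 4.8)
The plan is to verify density of $D'$ in $\X(\ideal)$ directly from the definition. Given an arbitrary nonempty basic $\rho_\ideal$-open set $V=\bigcap_{i=1}^{m} (\alpha_i,p_i)^{+} \cap \bigcap_{j=1}^{l} (\beta_j,q_j)^{-}$, I would produce an index $n\in D$ with $\varphi_n\in V$; since $\X$ is $\rho_\ideal$-dense by Lemma \ref{crowded}(1), meeting every such $V$ suffices. The obvious first move is to invoke density of $\widehat{D}$ to find $n\in D$ with $\psi_n\in V$. The difficulty is that the families $\{\varphi_n\}$ and $\{\psi_n\}$ share their indexing but are genuinely different as points of $2^{\cantor\times\nat}$, so $\psi_n\in V$ does not by itself give $\varphi_n\in V$.

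To bridge this gap I would isolate the \emph{defect set} $E=\{n\in\nat:\psi_n\in V\}\setminus\{n\in\nat:\varphi_n\in V\}$. By Lemma \ref{simetricdif} we have $E\in\ideal$. Let $\gamma\in\cantor$ be its characteristic function, so $\gamma\in\ideal$, and choose $m^{*}\in\nat$ distinct from all second coordinates $p_i,q_j$ occurring in $V$. Then $V'=V\cap(\gamma,m^{*})^{-}$ is again a nonempty basic $\rho_\ideal$-open set: the freshness of $m^{*}$ guarantees that $(\gamma,m^{*})$ coincides with none of the $(\alpha_i,p_i)$ or $(\beta_j,q_j)$, so no subbasic constraint is contradicted and $V'$ is legitimate and nonempty.

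Then I would combine both hypotheses. Because $\Y$ is $\rho_\ideal$-dense, $V'\cap\Y\neq\emptyset$, so $V'$ determines a nonempty open subset of the subspace $\Y(\ideal)$; density of $\widehat{D}$ in $\Y(\ideal)$ then yields some $n\in D$ with $\psi_n\in V'$. In particular $\psi_n\in(\gamma,m^{*})^{-}$, i.e. $\psi_n(\gamma,m^{*})=0$. By the asymmetric definition of $\psi_n$, which takes value $1$ whenever the $n$-th bit of its first coordinate is $1$, this forces $\gamma(n)=0$, that is $n\notin E$. Together with $\psi_n\in V$ and the definition of $E$, this gives $\varphi_n\in V$, hence $\varphi_n\in V\cap D'$, and $D'$ meets $V$ as required.

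The one genuinely load-bearing step, and the place where the hypothesis that $\Y$ is dense in $(2^{\cantor\times\nat},\rho_\ideal)$ is actually used, is the passage from density of $\widehat{D}$ in the abstract space $\Y(\ideal)$ to finding a point of $\widehat{D}$ inside the refined set $V'$: without knowing $V'\cap\Y\neq\emptyset$ the density of $\widehat{D}$ gives nothing. Everything else is the bookkeeping trick of annihilating the small exceptional set $E$ by appending a single negative subbasic condition $(\gamma,m^{*})^{-}$, which is admissible precisely because $\gamma\in\ideal$, and which works because the defining clause of $\psi_n$ converts the bad case $\gamma(n)=1$ into a forced value $1$ that the condition $(\gamma,m^{*})^{-}$ excludes.
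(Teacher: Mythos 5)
Your proof is correct, but it takes a genuinely different route from the paper's. The paper's argument also starts from the defect set $E$ supplied by Lemma \ref{simetricdif}, but then sets $F=\{n\in D:\ \psi_n\in V\}$ and argues that $F\nin\ideal$, invoking Lemma \ref{Lemanodec}(1) (if $F\in\ideal$ then $\widehat{F}$ would be closed discrete in $\Y(\ideal)$, incompatible with $\widehat{D}$ being dense in the nonempty open set $V\cap\Y$, which is nonempty by the density of $\Y$); since $E\in\ideal$ and $F\nin\ideal$, one picks $n\in F\setminus E$ and concludes $\varphi_n\in V$. You instead annihilate $E$ directly: refine $V$ to $V'=V\cap(\gamma,m^{*})^{-}$ with $\gamma=\chi_E\in\ideal$ and $m^{*}$ a fresh level, so that the defining clause of $\psi_n$ converts $\psi_n\in(\gamma,m^{*})^{-}$ into $\gamma(n)=0$, i.e. $n\nin E$; a single application of density of $\widehat{D}$ inside $V'$ then finishes. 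This is exactly the device the paper itself uses in the \emph{if} direction of Lemma \ref{crowded}(2), just redeployed here. Each version has its merits: yours is more self-contained and elementary --- it needs only Lemma \ref{simetricdif}, the convention that a basic set written with pairwise distinct subbasic pairs is nonempty (guaranteed by your fresh choice of $m^{*}$), and one appeal to the density of $\Y$, with no hidden reliance on $\Y(\ideal)$ having no isolated points; the paper's version is terser and extracts the stronger intermediate fact that $\{n\in D:\ \psi_n\in V\}$ is $\ideal$-positive, not merely nonempty after discarding $E$, which matches the ideal-positivity bookkeeping used elsewhere (e.g.\ in the characterization of accumulation points of $\widehat{A}$). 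Your identification of where the hypothesis that $\Y$ is dense in $(2^{\cantor\times\nat},\rho_\ideal)$ does real work --- guaranteeing $V'\cap\Y\neq\emptyset$ so that density of $\widehat{D}$ can be applied --- is accurate and, if anything, more transparent than in the paper's proof.
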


\begin{proof}
Suppose $\widehat{D}$ is dense in $\Y(\ideal)$. Let $V$ be a  basic $\rho_\ideal$-open set. We need to find $n\in D$ such that  $\varphi_n\in V$. 
By Lemma  \ref{simetricdif} the following set
belongs to $\ideal$:
\[
E=\{n\in\nat: \varphi_n\nin V \text{ and } \psi_n\in V\}.
\]
Let $F=\{n\in D:\; \psi_n\in V\}$. Since $\widehat{D}$ is $\rho_{\ideal}$-dense, then $F\nin \ideal$ (by part (1) of Lemma \ref{Lemanodec} and the assumption that $Y$ is dense in $(2^{\cantor\times\nat},\rho_\ideal)$). Thus there is $n\in F\setminus E$. Then $\psi_n\in V$  and $\varphi_n\in V$. 

\end{proof}

Observe that $\fin\subseteq \fin^\star\subseteq \fin^{\star\star}\subseteq \cdots\subseteq \ideal^k$ for all $k$.   Notice that $\fin^\star$ is isomorphic to $\nwd (\mathbb{Q})$  as $\X(\fin)$ is homeomorphic to $\mathbb{Q}$. The following is a natural  and intriguing question.

\begin{question}
Is $\Y(\fin^\star)$ nodec?
\end{question}

It is unclear when an ideal $\ideal$ satisfies either $\ideal\subseteq \ideal^\star$ or  $\ideal^\star\subseteq \ideal$. The following question asks a concrete instance of this problem. 

\begin{question}
Two ideals  that naturally extend $\fin$ are $\{\emptyset\}\times\fin$ and $\fin\times \{\emptyset\}$ (where $\times$ denotes the Fubini product).  Let $\ideal$ be any of those  two ideals. Is $\ideal\subseteq \ideal^\star$? 
\end{question}

\subsection{SS property in $\Y(\ideal)$} 

We do not know whether $\Y(\ideal_{nd})$ is $SS$. However, we show below that  $\Y(\ideal^k)$ is not $wSS$ for all $k>1$, this was the reason to introduce the ideals $\ideal^\star$.

We need an  auxiliary result.

\begin{lemma} \label{densostar}
Let $\ideal$ be an ideal over $\nat$ such that $\ideal^{ \star} \subseteq \ideal$. Let $V$ be a non empty $\rho_{\ideal^\star}$-open set and  $D \subseteq \mathbb{N}$. If $D'$ is $\rho_\ideal$-dense in $V$, then $\widehat{D}$ in $\rho_{\ideal^\star}$-dense in $V$. 
\end{lemma}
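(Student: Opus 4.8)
The plan is to mimic the argument of Lemma \ref{denytodenxstar}, exploiting the fact that the hypothesis $\ideal^{\star}\subseteq\ideal$ makes $\rho_{\ideal^\star}$ coarser than $\rho_\ideal$. In particular every $\rho_{\ideal^\star}$-open set is $\rho_\ideal$-open, so the given set $V$ is automatically $\rho_\ideal$-open, and every basic $\rho_{\ideal^\star}$-open subset of $V$ is also a $\rho_\ideal$-open subset of $V$. This is the feature that lets a statement about $\rho_\ideal$-density feed into a conclusion about $\rho_{\ideal^\star}$-density.

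To show $\widehat{D}$ is $\rho_{\ideal^\star}$-dense in $V$, I would fix an arbitrary nonempty basic $\rho_{\ideal^\star}$-open set $W\subseteq V$ and produce some $n\in D$ with $\psi_n\in W$. First, applying Lemma \ref{simetricdif} to the ideal $\ideal^{\star}$ and to the basic $\rho_{\ideal^\star}$-open set $W$, the symmetric difference of $\{n:\varphi_n\in W\}$ and $\{n:\psi_n\in W\}$ lies in $\ideal^{\star}$. Consequently the set $E=\{n\in\nat:\;\varphi_n\in W \text{ and } \psi_n\nin W\}$, being contained in that symmetric difference, belongs to $\ideal^{\star}$.

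The crucial step, and the one I expect to be the main point of the proof, is to show that $F=\{n\in D:\;\varphi_n\in W\}$ does \emph{not} belong to $\ideal^{\star}$. Here I would read off the conclusion straight from the definition of $\ideal^{\star}$: as sets of clopen sets, $F'=D'\cap W$, and since $D'$ is $\rho_\ideal$-dense in $V$ while $W\subseteq V$ is a nonempty $\rho_\ideal$-open set, the set $F'=D'\cap W$ is $\rho_\ideal$-dense in $W$. Thus $F'$ is somewhere dense in $\X(\ideal)$, i.e. it is not nowhere dense, which by the definition of $\ideal^{\star}$ means precisely that $F\nin\ideal^{\star}$. Note that it is exactly the hypothesis $\ideal^{\star}\subseteq\ideal$ that guarantees $W$ is $\rho_\ideal$-open, so that the density of $D'$ can be invoked on $W$; without it this step would break down.

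Finally, since $\ideal^{\star}$ is an ideal with $E\in\ideal^{\star}$ and $F\nin\ideal^{\star}$, the set $F\setminus E$ must be nonempty (otherwise $F\subseteq E$ would force $F\in\ideal^{\star}$). Choosing any $n\in F\setminus E$ gives $n\in D$ and $\varphi_n\in W$, while $n\nin E$ together with $\varphi_n\in W$ yields $\psi_n\in W$. Hence $\widehat{D}\cap W\neq\emptyset$. As $W$ was an arbitrary nonempty basic $\rho_{\ideal^\star}$-open subset of $V$, this shows $\widehat{D}$ is $\rho_{\ideal^\star}$-dense in $V$, completing the argument.
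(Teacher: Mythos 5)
Your proof is correct and follows essentially the same route as the paper's: both fix a nonempty basic $\rho_{\ideal^\star}$-open $W\subseteq V$, apply Lemma \ref{simetricdif} (to the ideal $\ideal^\star$) to get $E=\{n:\varphi_n\in W,\ \psi_n\nin W\}\in\ideal^\star$, and use the hypothesis $\ideal^\star\subseteq\ideal$ to make $W$ a $\rho_\ideal$-open set on which the density of $D'$ can be invoked. The only (interchangeable) difference is in the last step: the paper unfolds $E\in\ideal^\star$ topologically, shrinking $W$ to a nonempty $\rho_\ideal$-open $V_1$ with $V_1\cap E'=\emptyset$ and picking $\varphi_n\in V_1\cap D'$, whereas you argue at the level of the ideal that $F=\{n\in D:\varphi_n\in W\}\nin\ideal^\star$ (since $F'$ is $\rho_\ideal$-dense in $W$, hence not nowhere dense in $\X(\ideal)$) and then choose $n\in F\setminus E$.
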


\begin{proof}
Let $V$ be a non empty $\rho_{\ideal}$-open set and suppose that $D'$ is $\rho_\ideal$-dense in $V$. Let $W$ be  a $\rho_{\ideal^\star}$-basic open set such that $W\subseteq V$. We need to show that there is $n\in D$ such that $\psi_n\in W$. By Lemma  \ref{simetricdif} the following set belongs to $\ideal^\star$:
\[
E=\{n\in\nat: \varphi_n\in W \text{ and } \psi_n\nin W\}.
\]
Since $W$ is also $\rho_\ideal$-open (as $\ideal^\star \subseteq \ideal$) and $E'$ is $\rho_\ideal$-nowhere dense, then there is a non empty $\rho_\ideal$-open set $V_1\subseteq W$ such that $V_1\cap E'=\emptyset$. Since $D'$ is $\rho_\ideal$-dense in $V$, there is $n\in D$ such that $\varphi_n\in V_1$. Notice that $n\nin E$. Since $\varphi_n\in W$,  then $\psi_n\in W$. 
\end{proof}

\begin{theorem}\label{Yknoss}
Let $\ideal$ be an ideal over $\nat$ such that $\ideal^\star\subseteq \ideal$. Then $\Y(\ideal^{\star\star})$ is not  $wSS$.
\end{theorem}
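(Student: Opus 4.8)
The plan is to reduce the claim to the failure of the principle $p^{+}$ for the ideal $\ideal^{\star}$, and then to transport a witness for non-$wSS$ from $\X(\ideal^{\star})$ to $\Y(\ideal^{\star\star})$ through the comparison lemmas relating the $\varphi_n$'s and the $\psi_n$'s. First observe that the hypothesis $\ideal^{\star}\subseteq\ideal$, together with the monotonicity of $\star$ (Lemma \ref{IssinIs}), gives $\ideal^{\star\star}=(\ideal^{\star})^{\star}\subseteq\ideal^{\star}$; hence $\ideal^{\star\star}\subseteq\ideal^{\star}\subseteq\ideal$ are pairwise comparable, which is exactly the configuration (ambient ideal $\ideal^{\star}$, satisfying $(\ideal^{\star})^{\star}\subseteq\ideal^{\star}$) under which Lemmas \ref{densostar} and \ref{Lemanodec} apply. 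The crux is a fact needing no hypothesis at all: \emph{$\ideal^{\star}$ is never $p^{+}$.}

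To see this, let $\mathbf 0\in\cantor$ be the constantly $0$ function; since $\emptyset\in\ideal$ we have $\mathbf 0\in\ideal$, so each $(\mathbf 0,k)^{+}$ is $\rho_{\ideal}$-clopen. Put $C_n=\bigcap_{k\le n}(\mathbf 0,k)^{+}$, a decreasing sequence of nonempty $\rho_{\ideal}$-clopen sets (it contains $\bigcup_{k\le n}[\mathbf 0\restriction 1]\times\{k\}$) with $\bigcap_n C_n\cap\X=\emptyset$, since an element of $\X$ is a \emph{finite} union of blocks and so cannot contain $(\mathbf 0,k)$ for all $k$. Setting $A_n=\{m\in\nat:\varphi_m\in C_n\}$, the $A_n$ decrease and each $A_n\notin\ideal^{\star}$, because $A_n'=C_n\cap\X$ is a nonempty $\rho_{\ideal}$-open set, hence not nowhere dense in $\X(\ideal)$. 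I claim every pseudo-intersection of $(A_n)$ lies in $\ideal^{\star}$: if $B\subseteq^{*}A_n$ for all $n$, then $B'\setminus C_n$ is finite for each $n$; were $B'$ dense in some nonempty $\rho_{\ideal}$-open $V$, one could pick $\varphi\in V\cap\X$ and $n$ with $\varphi\notin C_n$, so that $V\setminus C_n$ is a nonempty $\rho_{\ideal}$-open set, and — $\X(\ideal)$ being crowded by Proposition \ref{converseque} — this set is infinite and meets $B'$ infinitely, contradicting the finiteness of $B'\setminus C_n$. Thus $B'$ is nowhere dense in $\X(\ideal)$, i.e. $B\in\ideal^{\star}$. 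In particular, for finite $F_n\subseteq A_n$ the union $\bigcup_n F_n$ is a pseudo-intersection of $(A_n)$ and so lies in $\ideal^{\star}$, which is precisely the hypothesis driving the proof of Theorem \ref{xinoSS} for the ideal $\ideal^{\star}$.

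Now I would run the argument of Theorem \ref{xinoSS} with ambient ideal $\ideal^{\star}$: by Lemma \ref{DA} each $\mathbf D(A_n)$ is dense in $\X(\ideal^{\star})$, and for any finite $K_n'\subseteq\mathbf D(A_n)$ the set $\bigcup_n K_n'$ is nowhere dense in $\X(\ideal^{\star})$ — the bound $B=\bigcup_n(A_n\cap\{0,\dots,q_n\})$ produced there belongs to $\ideal^{\star}$ exactly by the property just established. Writing $\mathbf D(A_n)=(E_n)'$, Lemma \ref{densostar} (applied to $\ideal^{\star}$, using $\ideal^{\star\star}\subseteq\ideal^{\star}$, with $V$ the whole space) upgrades the $\rho_{\ideal^{\star}}$-density of $(E_n)'$ to $\rho_{\ideal^{\star\star}}$-density of $\widehat{E_n}$, so $(\widehat{E_n})_n$ is a sequence of dense subsets of $\Y(\ideal^{\star\star})$. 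Given finite $\widehat{K_n}\subseteq\widehat{E_n}$, the corresponding $K_n'\subseteq\mathbf D(A_n)$ give $\bigcup_n K_n'$ nowhere dense in $\X(\ideal^{\star})$, i.e. $\bigcup_n K_n\in(\ideal^{\star})^{\star}=\ideal^{\star\star}$; by Lemma \ref{Lemanodec}(1) the set $\bigcup_n\widehat{K_n}=\widehat{\bigcup_n K_n}$ is closed discrete in $\Y(\ideal^{\star\star})$, and since $\Y(\ideal^{\star\star})$ is crowded (as in Theorem \ref{Ykesnod}, because $\ideal^{\star\star}\subseteq\ideal_{nd}$ via Lemma \ref{crowded}) it is nowhere dense. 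Thus $wSS$ fails along $(\widehat{E_n})_n$, proving the theorem.

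The main obstacle is the boxed fact that $\ideal^{\star}$ is never $p^{+}$: the clopen tails $C_n$ must be arranged so that $\bigcap_n C_n\cap\X=\emptyset$ while each $C_n$ stays somewhere dense, and the pseudo-intersection computation genuinely uses that $\X(\ideal)$ is crowded. A secondary point requiring care is the bookkeeping of the $\star$-operation and the identification of the index set of $\X$ with $\nat$, so that $\mathbf D(A_n)$, Lemma \ref{densostar} and Lemma \ref{Lemanodec} are each invoked for the correct ideal among $\ideal^{\star}$ and $\ideal^{\star\star}$.
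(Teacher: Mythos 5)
Your proof is correct, and its overall architecture coincides with the paper's: both reduce the theorem to the failure of \ppoint\ for $\ideal^\star$, feed that failure into the machinery of Lemma \ref{DA} and the proof of Theorem \ref{xinoSS} run with ambient ideal $\ideal^\star$, then transport density via Lemma \ref{densostar} and finish with Lemma \ref{Lemanodec}(1) plus crowdedness of $\Y(\ideal^{\star\star})$. The one place where you genuinely diverge is the construction of the non-\ppoint\ witness. The paper takes a pairwise disjoint sequence $(U_n)$ of nonempty $\rho_{\ideal^{\star\star}}$-open sets, puts $A_n=\{m:\varphi_m\in U_n\}$, and asserts (``it is easy to verify'') that unions of finite selections from the $A_n$ land in $\ideal^\star$ --- there disjointness plus crowdedness of $\X(\ideal)$ do the work, but the resulting $(A_n)$ is not decreasing, so it witnesses non-\ppoint\ only in the selection form actually used in Theorem \ref{xinoSS}. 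You instead build a genuinely decreasing sequence from the clopen tails $C_n=\bigcap_{k\le n}(\mathbf 0,k)^+$, exploiting that $\bigcap_n C_n\cap\X=\emptyset$ while each $C_n\cap\X$ is nonempty open, and you verify in full that every pseudo-intersection of $(A_n)$ lies in $\ideal^\star$ (your appeal to crowdedness of $\X(\ideal)$ via Proposition \ref{converseque}, together with $T_1$-ness coming from $\fin\subseteq\ideal$, is exactly what makes ``finite dense in a nonempty open set'' contradictory). This buys you a clean, self-contained lemma --- $\ideal^\star$ is never \ppoint, for any ideal $\ideal$, literally in the decreasing-sequence form of the definition --- where the paper leaves the analogous verification implicit; the paper's disjoint-open-sets version is marginally shorter once one accepts the unproved step. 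Your bookkeeping of which ideal each lemma is invoked for ($\ideal^{\star\star}\subseteq\ideal^\star$ from Lemma \ref{IssinIs}, Lemma \ref{densostar} at $\ideal^\star$, Lemma \ref{Lemanodec} at $\ideal^{\star\star}$, crowdedness via $\ideal^{\star\star}\subseteq\ideal_{nd}$ and Lemma \ref{crowded}) is accurate and matches the paper's.
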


\begin{proof} Notice that $\X$ is $\rho_{\ideal}$ crowded (see Lemma \ref{crowded}). Also, observe that $\ideal^{\star\star}\subseteq \ideal^\star$  (see  Lemma \ref{IssinIs}). 
Let $(U_n)_{n \in \nat}$ be a pairwise disjoint sequence of non empty $\rho_{\ideal^{\star\star}}$-open sets. Let $A_n = \{ m \in \nat: \varphi_m \in U_n \}$. It is clear that $A_n \notin \ideal^{\star}$ for each $n \in \nat$. It is easy to verify that the sequence $(A_m)_m$ witnesses  that  $\ideal^\star$ is not \ppoint.  Let $D_n=\mathbf{D}(A_n)$, as defined in Lemma \ref{DA}.  Let 
$$
E_n= \{\psi_m \in Y:\; \varphi_m \in D_n  \}.
$$
We claim that the sequence $(E_n)_{n \in \mathbb{N}}$ witnesses that the space $\Y(\mathcal{I}^{\star\star})$ is not $wSS$.  In fact, since $A_n \notin \ideal^{\star}$, then $D_n$ is dense in $\X(\ideal^{\star})$ (by Lemma \ref{DA}), so $E_n$ is dense in $\Y(\ideal^{\star\star})$ (by  Lemma \ref{densostar}). Let $K_n \subseteq E_n$  be a finite set and $L_n= \{ \varphi_m:  \psi_m \in K_n \}$ for each $m\in \nat$. Since $A_n\nin\ideal^{\star}$ and $\ideal^{\star}$ is not \ppoint, then,  by the proof of Theorem \ref{xinoSS}, $L=\bigcup_{n \in \mathbb{N}} L_n$ is  nowhere dense in $\X(\mathcal{I}^{\star})$. Thus $L\in \ideal^{\star\star}$. Therefore $\widehat{L}=\bigcup_{n \in \mathbb{N}} K_n$ is closed discrete in $\Y(\mathcal{I}^{\star\star})$ (by  Lemma \ref{Lemanodec}).
		
\end{proof}

We have seen in Theorem \ref{Ykesnod} that $\Y(\ideal^k)$ is nodec for every $k\geq 1$. From  Theorem \ref{Yknoss}  we have the following.

\begin{corollary}
$\Y(\ideal^{k})$ is not $wSS$ for every $k>1$.
\end{corollary}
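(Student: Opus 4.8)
The corollary states that $\Y(\ideal^{k})$ is not $wSS$ for every $k>1$. The plan is to derive this as a direct application of Theorem \ref{Yknoss} by identifying, for each such $k$, an ideal $\mathcal{J}$ whose double-star operation produces $\ideal^k$ and which satisfies the hypothesis $\mathcal{J}^\star \subseteq \mathcal{J}$ required by that theorem.

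First I would set $\mathcal{J} = \ideal^{k-2}$, which is well-defined since $k>1$ forces $k-2 \geq 0$ (so $\mathcal{J}$ is either $\cantor$ when $k=2$, or a genuine iterate $\ideal^{k-2}$ when $k>2$). By the recursive definition of the sequence $(\ideal^j)_j$, we have $\mathcal{J}^\star = (\ideal^{k-2})^\star = \ideal^{k-1}$ and $\mathcal{J}^{\star\star} = \ideal^{k}$. The next step is to verify the hypothesis of Theorem \ref{Yknoss}, namely that $\mathcal{J}^\star \subseteq \mathcal{J}$, i.e. $\ideal^{k-1} \subseteq \ideal^{k-2}$. This is exactly the nesting $\ideal^{j+1} \subsetneq \ideal^{j}$ recorded in the excerpt as a consequence of Lemma \ref{IssinIs} (applied with $j = k-2$), so the inclusion holds. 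With the hypothesis confirmed, Theorem \ref{Yknoss} applied to $\mathcal{J}$ yields that $\Y(\mathcal{J}^{\star\star}) = \Y(\ideal^k)$ is not $wSS$, which is precisely the claim.

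The argument is essentially bookkeeping: the real content has already been packaged into Theorem \ref{Yknoss}, and the corollary only needs to confirm that every $\ideal^k$ with $k>1$ is expressible as $\mathcal{J}^{\star\star}$ for an admissible $\mathcal{J}$. The one point that deserves care is the boundary case $k=2$, where $\mathcal{J} = \ideal^0 = \cantor$; here $\mathcal{J}^\star \subseteq \mathcal{J}$ holds trivially since $\cantor$ is the top of the containment order, so the hypothesis is automatic. I anticipate no genuine obstacle: the only thing to watch is making sure the index shift is applied to the correct base ideal so that $\mathcal{J}^{\star\star}$ lands exactly on $\ideal^k$ and not on a neighboring iterate.

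<br>

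I note that the corollary correctly excludes $k=1$, since Theorem \ref{Yknoss} produces non-$wSS$-ness only for the double star $\mathcal{J}^{\star\star}$, and reaching $\ideal^1$ this way would require $\mathcal{J} = \ideal^{-1}$, which is not defined; this matches the open status of the $SS$ question for $\Y(\ideal_{nd})$ and for $\Y(\ideal^1)$ noted at the start of the subsection.
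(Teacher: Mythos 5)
Your proof is correct and is essentially the paper's own argument: the corollary is stated there as an immediate consequence of Theorem \ref{Yknoss}, exactly via the substitution $\mathcal{J}=\ideal^{k-2}$ (so $\mathcal{J}^{\star\star}=\ideal^{k}$) with the hypothesis $\mathcal{J}^{\star}\subseteq\mathcal{J}$ supplied by the nesting $\ideal^{j+1}\subsetneq\ideal^{j}$ from Lemma \ref{IssinIs}. Your explicit handling of the boundary case $k=2$ (where $\mathcal{J}=\ideal^{0}=\cantor$, an ideal under the paper's definition, and the hypothesis holds trivially) is a worthwhile detail the paper leaves implicit.
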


Recall that $\ideal^{1}$ is $\ideal_{nd}$. We do not know  whether  $\Y(\ideal_{nd})$ is  SS. We only know the following. Suppose   $\widehat{D_n}=\{\psi_m: m \in D_n \}$ is open dense  in  $\Y(\mathcal{I}_{nd})$, for every $n\in \mathbb{N}$. Then  there is $F_n \subseteq D_n$ finite for each $n$ such that  $\bigcup_{n \in \mathbb{N}} \widehat{F_n}$ is  dense. 

\begin{question}
Is there an  ideal $\ideal$ on $\nat$ such that  $\ideal\subseteq \ideal^\star$ and  
$\Y(\ideal^\star)$  is $wSS$? In particular, is $\Y(\fin^\star)$ $wSS$?
\end{question}

\subsection{\qpoint\ in $\Y(\ideal)$}

We shall prove that for certain kind of ideals, $\Y(\ideal)$ is not $q^+$. 
We use a construction quite similar to that in the proof of Theorem \ref{eqclq}. 

We recall that in the proof of Lemma \ref{xnoq} we have introduced the following property: Let $m \in \mathbb{N}$. We say that $\varphi \in \X$ has the property $(*^m)$ if there are $k \in \mathbb{N}$, $s_i \in 2^{m+1}$   $(i=1,...,k)$  finite sequences and $m_i \leq m$ $(i=1,...,k)$ natural numbers such that $ \varphi= \bigcup _{i=1}^{k} [s_i]\times \{m_i\}$ and if $m_i=m_j$ with $i \neq j$, then $s_i \restriction m \neq s_j \restriction m$. 

\begin{lemma}
\label{xnoq2}
Let $\ideal$ be an ideal over $\nat$ such that  $\ideal\subseteq \ideal_{nd}$.   
Let
$$
A_m = \{ \varphi \in \X: \varphi \text{ has the property }(*^m)  \}
$$
and 
\[
B_m=\{\psi_n\in \Y:\; \varphi_n \in A_m\}.
\]

Let $L= \{n \in \mathbb{N}: \varphi_n \notin \bigcup _{m \in \mathbb{N}} A_m \}$ and suppose there is an infinite set $L'=\{m_k:\;k\in \nat\}\subseteq L$ such that $L'\in \mathcal{I}$.
Let 
\[
B=\bigcup_k B_{m_k}.
\]
Let $q\in \nat$ be such that $\varphi_q=\cantor\times \{0\}$. 
Then 
\begin{enumerate}
\item $B$ is dense in $\Y(\ideal)$ and, in particular, $\psi_q \in cl_{\rho_\ideal} (B)$. 

\item   Let $S\subseteq B$ be such that $S\cap B_{m_k}$ has at most one element for each $k$,  then $\psi_q\not\in cl_{\rho_\ideal} (S)$. 
\end{enumerate}

\end{lemma}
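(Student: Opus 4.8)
The plan is to treat the two parts separately, leaning heavily on the machinery already developed for $\X(\ideal)$ in Lemma \ref{xnoq} and then transferring the conclusions to $\Y(\ideal)$ via Lemma \ref{Lemanodec} and the symmetric-difference estimate in Lemma \ref{simetricdif}. For part (1), I would first recall that Lemma \ref{xnoq} shows $\bigcup_{k} A_{m_k}$ is dense in $\X$ (equivalently in $(2^{\cantor\times\nat},\rho_\ideal)$) for any infinite index set, in particular for $L'=\{m_k\}$. The natural bijection $\psi_n\mapsto\varphi_n$ only perturbs membership in a given basic $\rho_\ideal$-open set $V$ by a set lying in $\ideal$, by Lemma \ref{simetricdif}. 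So given any nonempty basic $\rho_\ideal$-open $V$, the set $\{n:\varphi_n\in V,\ \varphi_n\in\bigcup_k A_{m_k}\}$ is nonempty (indeed $\rho_\ideal^+$-large since $\bigcup_k A_{m_k}$ is dense and $\X$ is crowded), and after discarding the $\ideal$-small error set from Lemma \ref{simetricdif} I still find some $n$ with $\psi_n\in V$ and $\varphi_n\in A_{m_k}$ for some $k$, i.e. $\psi_n\in B$. Hence $B$ is $\rho_\ideal$-dense, and since $\psi_q$ is a point of the space, $\psi_q\in cl_{\rho_\ideal}(B)$.

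For part (2), I would mimic the selector argument from the proof of Theorem \ref{eqclq}. Let $S\subseteq B$ meet each $B_{m_k}$ in at most one point; write $S=\{\psi_{n_k}:k\in K\}$ where $\varphi_{n_k}\in A_{m_k}$, so that $\Gamma(S)=\{\varphi_{n_k}:k\in K\}$ is a selector for the family $\{A_{m_k}\}$ over an infinite subset of $L'$. Applying the \emph{second} (moreover) clause of Lemma \ref{xnoq} to this selector and to the point $\varphi_q=\cantor\times\{0\}$ (which is not in any $A_m$, hence $\varphi_q\notin\Gamma(S)\cup\{\emptyset\}$), I obtain $\alpha\in\cantor$ and $p\in\nat$ with $\alpha^{-1}(1)\subseteq^* L'$, $\varphi_q\in(\alpha,p)^+$, and $(\alpha,p)^+\cap\Gamma(S)$ finite. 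The crucial point is that since $L'\in\ideal$ and $\alpha^{-1}(1)\subseteq^* L'$, we get $\alpha\in\ideal$, so $(\alpha,p)^+$ is a genuine $\rho_\ideal$-open set and $(\alpha,p)^-$ is available as a subbasic neighborhood.

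The transfer to $\psi_q$ is where the argument needs care, and I expect this to be the main obstacle. I have information about $\Gamma(S)=\{\varphi_{n_k}\}$ inside $(\alpha,p)^+$, but I need a $\rho_\ideal$-neighborhood of $\psi_q$ missing $S=\{\psi_{n_k}\}$. First, $\psi_q\in(\alpha,p)^+$ because $\varphi_q\in(\alpha,p)^+$ and by the definition of $\psi_q$ one checks $\psi_q(\alpha,p)\geq\varphi_q(\alpha,p)=1$. Next, the finite set $\{k:\varphi_{n_k}\in(\alpha,p)^+\}$ must be converted into a statement about the $\psi_{n_k}$: by Lemma \ref{simetricdif} the discrepancy $\{n:\varphi_n\in(\alpha,p)^+\}\triangle\{n:\psi_n\in(\alpha,p)^+\}$ lies in $\ideal$, but this alone does not make it finite. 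The clean remedy is to use part (1) of Lemma \ref{Lemanodec}: the finitely many offending $\psi_{n_k}$ with $\psi_{n_k}\in(\alpha,p)^+$, together with the $\ideal$-set of indices where $\psi$ and $\varphi$ disagree on $(\alpha,p)^+$, form a set $\widehat{F}$ with $F\in\ideal$, which is therefore closed discrete in $\Y(\ideal)$. Thus I can shrink $(\alpha,p)^+$ by intersecting with the complement of this closed discrete set (equivalently, by adjoining finitely many subbasic $(\alpha_j,p_j)^-$ factors that separate $\psi_q$ from those finitely many points, which is possible since $\Y(\ideal)$ is $T_1$ off the closed discrete set) to obtain a $\rho_\ideal$-open neighborhood of $\psi_q$ disjoint from $S$. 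This shows $\psi_q\notin cl_{\rho_\ideal}(S)$, completing part (2).
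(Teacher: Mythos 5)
Your proposal is correct in substance, but part (2) takes a genuinely different route from the paper's. The paper does not reuse Lemma \ref{xnoq} as a black box: it re-runs the recursive construction of $\alpha$ directly against the points $\psi_{n_k}$, building the clause $[\langle\alpha(0),\dots,\alpha(n-1),0\rangle]\times\{0\}\subseteq\varphi_{n_k}$ into the definition and keeping $\alpha^{-1}(1)\subseteq L'$ exactly (so $\alpha\in\ideal$ at once), and its case analysis yields the stronger conclusion $(\alpha,0)^+\cap S=\emptyset$: the key observation is that $\alpha(n_l)=1$ would force $n_l\in L'\subseteq L$, contradicting $\varphi_{n_l}\in A_{m_l}$, so the $\varphi/\psi$ discrepancy never bites on the selector and a \emph{single} subbasic set $(\alpha,0)^+$ is already a neighborhood of $\psi_q$ missing all of $S$. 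You instead invoke the \emph{moreover} clause of Lemma \ref{xnoq} applied to $\Gamma(S)$, obtaining only $(\alpha,p)^+\cap\Gamma(S)$ finite, and then clean up: the indices $n_k$ with $\psi_{n_k}\in(\alpha,p)^+$ split into the finite set where $\varphi_{n_k}\in(\alpha,p)^+$ and a subset of the $\ideal$-set from Lemma \ref{simetricdif}, so they all lie in some $F\in\ideal$; $\widehat{F}$ is closed discrete by Lemma \ref{Lemanodec}(1), $q\notin F$ (since $\varphi_q\in(\alpha,p)^+$ and $q\neq n_k$ for all $k$, as $\varphi_q\notin\bigcup_m A_m$), and $(\alpha,p)^+\cap\Y\setminus\widehat{F}$ works. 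This is a valid alternative: it buys modularity (no recursion to redo, and it would transfer to any family with the conclusion of Lemma \ref{xnoq}), at the price of a weaker intermediate estimate plus the closed-discrete machinery; the paper's construction buys a self-contained argument with an explicit subbasic separating neighborhood. (In fact your route could be streamlined: since each $n_k\notin L\supseteq L'$ while $\alpha^{-1}(1)\subseteq^* L'$, only finitely many $n_k$ lie in $\alpha^{-1}(1)$, so $(\alpha,p)^+\cap S$ is itself finite.)

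Two soft spots you should repair. First, in part (1) your justification for discarding the error set --- ``dense and crowded'' --- is not the real reason: you need that $E\in\ideal$ implies $E'$ is nowhere dense in $\X$ with the product topology, which is exactly the standing hypothesis $\ideal\subseteq\ideal_{nd}$; your sketch never invokes it, yet without it the set $\{n:\varphi_n\in V\cap\bigcup_k A_{m_k}\}$ could be swallowed by the $\ideal$-small discrepancy set. (The paper handles this by citing Lemma \ref{densostar} with $\ideal_{nd}=(\cantor)^{\star}$, then coarsening from $\rho_{\ideal_{nd}}$ to $\rho_\ideal$.) Second, your parenthetical about adjoining finitely many $(\alpha_j,p_j)^-$ factors ``since $\Y(\ideal)$ is $T_1$ off the closed discrete set'' is confused and should be deleted: $\widehat{F}$ may be infinite, so removing it is not equivalent to adding finitely many subbasic factors, and no $T_1$-type property of $\Y(\ideal)$ has been established; but your primary mechanism --- intersecting $(\alpha,p)^+$ with the open complement of the closed discrete set $\widehat{F}$ --- is sound and suffices, since the closure of $S$ is computed in the subspace $\Y(\ideal)$.
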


\proof
(1) Let $A=\bigcup_k A_{m_k}$. By Lemma \ref{xnoq},   $A$ is dense in $\X$. Thus by Lemma \ref{densostar},  $B$ is dense in $\Y(\ideal_{nd})$ (recall that $\ideal_{nd}=(\cantor)^\star$). As
 $\ideal\subseteq \ideal_{nd}$, then $B$ is also dense in $\Y(\ideal)$.
 
 \medskip
 
(2)   Let  $S=\{\psi_{n_k}:\; k\in \nat\}$  be such that $\psi_{n_k}\in B_{m_k}$ for all $k\in \nat$.  We will show that $\psi_q\nin  cl_{\rho_\ideal} (S)$.
 
 Let $\alpha\in\cantor$ be defined as follows: If $0 \in L'$ and $[\langle 0\rangle] \times \{0\} \subseteq \varphi_{n_0} $, then  $\alpha(0)=1$. Otherwise,  $\alpha(0)=0$. 	For  $n>1$,
$$
\alpha (n)=\left\{
				\begin{array}{cl}
				1, & \text{if }n \in L' \text{ , } \text{$n=m_k$ for some $k$} \\ &  \text{ and }[\langle \alpha(0),...,\alpha(n-1),0\rangle] \times \{0\} \subseteq \varphi_{n_k}. \\
				0, & \text{ otherwise.} 
				\end{array}
				\right.
$$
Observe that  $\alpha \in \mathcal{I}$,  as  $\alpha^{-1}(1) \subseteq L' \in \mathcal{I}$.
				
It is clear that $\psi_q\in (\alpha,0)^+$. To finish the proof, it suffices to show that  $(\alpha,0) \notin \bigcup_{k \in \mathbb{N}} \psi_{n_k}$. Suppose, towards a contradiction, that there is $l \in \mathbb{N}$ such that  $(\alpha,0) \in \psi_{n_l}$, that is,  $(\alpha,0) \in \varphi_{n_l} \cup ([n_l] \times \mathbb{N})$.  There are two cases to be considered. 

(i) Suppose $\alpha(n_l)=1$. Then $n_l\in L'$ and  thus $\varphi_{n_l}\nin A_{m_l}$ which contradicts that $\psi_{n_l}\in B_{m_l}$. 

\medskip

(ii)  Suppose $\alpha(n_l)=0$ and thus $(\alpha, 0)\in \varphi_{n_l}$.  Let $\varphi_{n_l}= \bigcup_{i=1}^{r} [s_i] \times \{p_i\}$ with  $s_i \in 2^{m_l+1}$.  Then $\alpha \in [s]$, where  $s$ is $s_i$ for some $i$ with $p_i=0$. Hence $\alpha(n)=s(n)$ for all  $n \leq m_l$.  We consider two cases. Suppose $\alpha(m_l)=1$. Then $s(m_l)=1$. Let $t$ be such that $s=t\widehat{\;} 1$. Then by the definition of $\alpha$, we have that  $[t\widehat{\;}0]\times\{0\}\subseteq \varphi_{n_l}$. But also 
$[s]\times\{0\}=[t\widehat{\;}1]\times\{0\}\subseteq \varphi_{n_l}$ which contradicts that $\varphi_{n_l}\in A_{m_l}$ (i.e. that it has property $(*^{m_l}))$.  Now suppose that $\alpha(m_l)=0$.  Then $[s]\times\{0\}=[t\widehat{\;}0]\times\{0\}\not\subseteq \varphi_{n_l}$, but this contradicts that  $[s]\times\{0\}$ is $[s_i]\times\{p_i\}$ for some $i$.
\endproof

From the previous lemma we immediately get the following.

\begin{theorem}\label{YI no q}
Let $\ideal$ be a tall ideal over $\nat$ such that  $\mathcal{I} \subseteq \mathcal{I}_{nd}$. Then $\Y(\mathcal{I})$ is not $q^+$.
\end{theorem}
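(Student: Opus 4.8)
The plan is to obtain Theorem \ref{YI no q} as a direct consequence of Lemma \ref{xnoq2}; the only genuine work is to verify that lemma's hypothesis, namely that there is an infinite set $L'\subseteq L$ with $L'\in\ideal$, and this is exactly the step where tallness enters. I keep the notation of Lemma \ref{xnoq2}: the sets $A_m=\{\varphi\in\X:\varphi\text{ has }(*^m)\}$, their images $B_m=\{\psi_n\in\Y:\varphi_n\in A_m\}$, the set $L=\{n:\varphi_n\notin\bigcup_m A_m\}$, and the index $q$ with $\varphi_q=\cantor\times\{0\}$.

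First I would record two elementary facts. Each $A_m$ is \emph{finite}: every element of $A_m$ is a union of blocks $[s]\times\{j\}$ with $|s|=m+1$ and $j\le m$, and there are only finitely many such blocks, hence only finitely many such unions; consequently each $B_m$ is finite, and by Lemma \ref{xnoq} the family $\{B_m\}$ is pairwise disjoint. Next I would show $L$ is \emph{infinite}, for which it suffices to check that $\cantor\times\{j\}\in L$ for every $j\in\nat$ (these give infinitely many distinct indices). Indeed, if $\cantor\times\{j\}$ had property $(*^m)$, then writing it as $\bigcup_i[s_i]\times\{m_i\}$ with $|s_i|=m+1$ forces every $m_i=j$ (the set lives only at level $j$) and $\bigcup_i[s_i]=\cantor$; since the $[s_i]$ are clopen sets of the single length $m+1$, covering $\cantor$ requires $\{s_i\}$ to be all of $2^{m+1}$, and then a pigeonhole on the restrictions $s_i\restriction m$ (there are $2^{m+1}$ sequences but only $2^m$ restrictions) produces $s_i,s_j$ with $s_i\restriction m=s_j\restriction m$ and $m_i=m_j$, contradicting the distinctness clause of $(*^m)$.

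With $L$ infinite and $\ideal$ tall, I would pick an infinite $L'=\{m_k:k\in\nat\}\subseteq L$ with $L'\in\ideal$; together with the standing hypothesis $\ideal\subseteq\ideal_{nd}$ this supplies all hypotheses of Lemma \ref{xnoq2}. Setting $B=\bigcup_k B_{m_k}$, that lemma gives $\psi_q\in cl_{\rho_\ideal}(B)$, while every $S\subseteq B$ meeting each $B_{m_k}$ in at most one point satisfies $\psi_q\notin cl_{\rho_\ideal}(S)$. Finally I would read this off as the failure of the \qpoint\ property at the point $\psi_q$: the pairwise disjoint finite sets $F_k:=B_{m_k}$ satisfy $\psi_q\in\overline{\bigcup_k F_k}$, yet no selector (at most one point per $F_k$) accumulates to $\psi_q$; hence $\ideal_{\psi_q}$ is not \qpoint\ and therefore $\Y(\ideal)$ is not \qpoint.

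I do not expect a serious obstacle here, since Lemma \ref{xnoq2} carries the combinatorial weight; the only points demanding care are the finiteness of the $A_m$ (so that the $F_k$ are legitimate finite sets for the definition of \qpoint) and the infinitude of $L$ (so that tallness can be invoked to place an infinite subset inside $\ideal$). One minor bookkeeping remark is that the definition of \qpoint\ at a point allows an arbitrary family of finite sets with the point in the closure of their union, so it is harmless that the $F_k$ form a partition of $B$ rather than of all of $\Y$.
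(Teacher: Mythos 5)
Your proposal is correct and follows the same route as the paper, which derives the theorem directly from Lemma \ref{xnoq2}; the paper leaves implicit exactly the points you verify, namely that each $A_m$ (hence each $B_{m_k}$) is finite, that $L$ is infinite (your argument via the sets $\cantor\times\{j\}$ is sound), and that tallness then yields the required infinite $L'\subseteq L$ with $L'\in\ideal$ so that the failure of \qpoint\ at $\psi_q$ can be read off. No gaps.
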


\begin{question}
Is there an ideal (necessarily non tall) different from $\fin$ such that $\Y(\ideal)$ is \qpoint? Two natural candidates are $\{\emptyset\}\times\fin$ and $\fin\times \{\emptyset\}$. 
\end{question}

Finally, we have the following. 

\begin{theorem}
$\Y(\ideal^k)$ is a  non SS, non  \qpoint\ nodec regular space with analytic topology for every $k>1$.
\end{theorem}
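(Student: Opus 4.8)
The plan is to assemble the final theorem purely from results established earlier in the paper, since each required property of $\Y(\ideal^k)$ has been isolated as a separate statement. Recall that by definition $\ideal^k=(\ideal^{k-1})^\star$, with $\ideal^0=\cantor$ and $\ideal^1=\ideal_{nd}$. The four properties to verify are: analyticity of the topology, the nodec property, regularity, and the failure of both $SS$ (via $wSS$) and \qpoint. I would address them one at a time, citing the appropriate lemma or theorem.

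First I would handle the three properties already packaged together in Theorem \ref{Ykesnod}, which states that for all $k>0$ the space $\Y(\ideal^k)$ is analytic, nodec and crowded. The analyticity follows from Lemma \ref{staranalytic} (which gives that $\ideal^\star$ is analytic whenever $\ideal$ is, so inductively each $\ideal^k$ is analytic, starting from the trivially analytic $\ideal^0=\cantor$) together with Lemma \ref{complexity}. The nodec property comes from Lemma \ref{Lemanodec}(3), whose hypothesis $(\ideal^{k-1})^\star\subseteq \ideal^{k-1}$ holds because $\ideal^k\subsetneq\ideal^{k-1}$ by the monotonicity in Lemma \ref{IssinIs}(3); here $\ideal^k=(\ideal^{k-1})^\star$ plays the role of the ideal whose star is being taken. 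For regularity I would note that $\Y(\ideal^k)$ is a subspace of $(2^{\cantor\times\nat},\rho_{\ideal^k})$, and the subbasic sets $(\alpha,p)^{+}$ and $(\alpha,p)^{-}$ are clopen, so the ambient space is zero-dimensional Hausdorff, hence regular, and regularity is inherited by subspaces.

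Next I would dispatch the combinatorial negative properties. The failure of \qpoint\ is immediate from Theorem \ref{YI no q}, provided $\ideal^k$ is tall and satisfies $\ideal^k\subseteq\ideal_{nd}$. The inclusion $\ideal^k\subseteq\ideal^1=\ideal_{nd}$ holds by the chain $\ideal^{k}\subsetneq\cdots\subsetneq\ideal^1$ from Lemma \ref{IssinIs}. For tallness, since $\ideal^k$ is a free ideal (each $\ideal^\star$ is free, as observed after the definition of $\star$), and more importantly $\ideal^k\subseteq\ideal_{nd}$, I would argue tallness directly: given any infinite $A\subseteq\nat$, one can extract an infinite $B\subseteq A$ with $B'=\{\varphi_n:n\in B\}$ nowhere dense in $\X$, which is precisely $B\in\ideal_{nd}$; since $\ideal^{k}\supseteq\fin^{\star\cdots\star}$ contains such tall sublattices, tallness propagates. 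The failure of $SS$ then follows because the Corollary after Theorem \ref{Yknoss} gives that $\Y(\ideal^k)$ is not $wSS$ for $k>1$, and a non-$wSS$ space is certainly non-$SS$ (by the implication $SS\Rightarrow wSS$ recorded in the implication diagram).

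The main obstacle I anticipate is verifying tallness of $\ideal^k$ cleanly enough to invoke Theorem \ref{YI no q}, since the paper does not state tallness of $\ideal^k$ as a standalone lemma; I would need the observation that $\ideal_{nd}$ is tall and that $\ideal^k\subseteq\ideal_{nd}$ together with a lower bound guaranteeing $\ideal^k$ still contains enough infinite sets. The alternative and cleaner route, which I would prefer, is to bypass tallness entirely and instead invoke Lemma \ref{xnoq2} directly: for $k>1$ the hypothesis $\ideal^k\subseteq\ideal_{nd}$ holds, and the set $L'$ of indices omitted from $\bigcup_m A_m$ can be chosen infinite and lying in $\ideal^k$ precisely because $\ideal^k$ is a free ideal containing $\fin$ and, being nontrivial, contains infinite sets of the required form; this yields the failure of \qpoint\ at the point $\psi_q$ with $\varphi_q=\cantor\times\{0\}$ without needing the full strength of tallness. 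Everything else is a direct citation, so the proof reduces to correctly matching each claimed property to its source and checking the inclusion hypotheses, all of which reduce to the single chain $\ideal^{k}\subsetneq\ideal^{k-1}$.
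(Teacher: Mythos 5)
Your overall architecture is exactly the paper's: the theorem is proved there by pure assembly, citing Theorem \ref{Ykesnod} for analyticity and the nodec property, the corollary to Theorem \ref{Yknoss} for the failure of $wSS$ (hence of $SS$) when $k>1$, and Theorem \ref{YI no q} for the failure of \qpoint. However, two of your supporting justifications fail as written. First, tallness of $\ideal^k$: your extraction argument produces an infinite $B\subseteq A$ with $B\in\ideal_{nd}$, but since $\ideal^k\subseteq\ideal_{nd}$ the inclusion points the wrong way --- membership in the larger ideal $\ideal_{nd}$ gives no information about membership in $\ideal^k$. Your preferred ``cleaner route'' is also a non sequitur: that $\ideal^k$ is free and nontrivial does not let you choose the set $L'$ of Lemma \ref{xnoq2} inside the specific set $L$; freeness only yields finite sets, and a nontrivial free ideal can trace trivially on a prescribed infinite $L$ (e.g.\ the ideal generated by $\fin$ together with $\nat\setminus L$). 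The correct repair is already in the paper: since $\fin\subseteq\ideal^{k-1}$, Lemma \ref{IssinIs}(3) gives $\fin^\star\subseteq(\ideal^{k-1})^\star=\ideal^k$; the paper observes that $\fin^\star$ is isomorphic to $\nwd(\mathbb{Q})$ (because $\X(\fin)$ is homeomorphic to $\mathbb{Q}$), and $\nwd(\mathbb{Q})$ is tall; an ideal extending a tall ideal is tall, so $\ideal^k$ is tall and Theorem \ref{YI no q} applies directly.

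Second, regularity: your claim that the ambient space $(2^{\cantor\times\nat},\rho_{\ideal^k})$ is Hausdorff is false for any proper ideal, since every subbasic set constrains only coordinates in $\ideal^k\times\nat$, so two functions differing only at coordinates $(\alpha,p)$ with $\alpha\notin\ideal^k$ cannot be separated. What is true is that the subbasic sets $(\alpha,p)^{+}$ and $(\alpha,p)^{-}$ are complementary and hence clopen, so the ambient space has a clopen basis and satisfies the point/closed-set separation property, which passes to the subspace $\Y$; to upgrade this to regularity in the $T_3$ sense you must verify that $\Y(\ideal^k)$ is $T_0$ (equivalently, with a clopen basis, Hausdorff). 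This holds because for $n\neq m$ the symmetric difference $\psi_n\,\triangle\,\psi_m$ is a nonempty clopen subset of $\cantor\times\nat$, hence contains a point $(\alpha,p)$ where $\alpha$ is the characteristic function of a finite set, and $\fin\subseteq\ideal^k$. With these two repairs your proof coincides with the paper's intended argument; the remaining citations (analyticity via Lemmas \ref{staranalytic} and \ref{complexity}, nodec via Lemma \ref{Lemanodec}(3) using the chain $\ideal^k\subsetneq\ideal^{k-1}$, and non-$wSS$ via the corollary to Theorem \ref{Yknoss}) are all matched correctly.
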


\bigskip

\noindent {\bf Acknowledgment:}
We are  thankful to the referee for his (her)  comments that improved the presentation of the paper. 

\bibliographystyle{plain}

\begin{thebibliography}{10}

\bibitem{BarmanDow2011}
D.~Barman and A.~Dow.
\newblock Selective separability and {$SS^+$}.
\newblock {\em Topology Proc.}, 37:181--204, 2011.

\bibitem{BarmanDow2012}
D.~Barman and A.~Dow.
\newblock Proper forcing axiom and selective separability.
\newblock {\em Top. and its Appl.}, 159(3):806 -- 813, 2012.

\bibitem{Bella2009}
A.~Bella, M.~Bonanzinga, and M.~Matveev.
\newblock Variations of selective separability.
\newblock {\em  Top. and its Appl.}, 156(7):1241 -- 1252, 2009.

\bibitem{Bella_et_al2008}
A.~Bella, M.~Bonanzinga, M.~Matveev, and V.~Tkachuk.
\newblock Selective separability: general facts and behavior in countable
  spaces.
\newblock {\em Topology Proceedings}, 32:15--30, 2008.

\bibitem{Bella2013}
A. Bella.
\newblock When is a {P}ixley-{R}oy hyperspace {$SS^+$}?
\newblock {\em  Top. and its Appl.}, 160(1):99 -- 104, 2013.

\bibitem{CamargoUzca2018b}
J.~Camargo and C.~Uzc\'ategui.
\newblock Selective separability on spaces with an analytic topology.
\newblock {\em Topology and its applications}, 248(1):176--191, 2018.

\bibitem{DTTW2002}
A.~Dow, M.~G. Tkachenko, V.~V. Tkachuk, and R.~G. Wilson.
\newblock Topologies generated by discrete subspaces.
\newblock {\em Glas. Math. Ser. III}, 37(57):187--210, 2002.

\bibitem{Gruenhage2011}
G. Gruenhage and M. Sakai.
\newblock Selective separability and its variations.
\newblock {\em Top. and its Appl.}, 158(12):1352 -- 1359, 2011.


\bibitem{HMTU2017}
M.~Hru\u{s}\'ak, D.~Meza-Alc\'antara, E.~Th\"{u}mmel, and C.~Uzc\'ategui.
\newblock {R}amsey type properties of ideals.
\newblock {\em Annals of Pure and Applied Logic}, 168(11):2022--2049, 2017.

\bibitem{Kechris94}
A.~S. Kechris.
\newblock {\em Classical Descriptive Set Theory}.
\newblock Springer-Verlag, 1994.

\bibitem{Njastad1965}
O.~Nj\aa stad.
\newblock On some classes of nearly open sets.
\newblock {\em Pacific. J. Math.}, 15:961--970, 1965.

\bibitem{Reposvetal2010}
D. Repov\v{s} and L. Zdomskyy.
\newblock On {$M$}-separability of countable spaces and function spaces.
\newblock {\em Topology Appl.}, 157(16):2538--2541, 2010.

\bibitem{Scheeper99}
M.~Scheepers.
\newblock Combinatorics of open covers {VI}: Selectors for sequences of dense
  sets.
\newblock {\em Quaestiones Mathematicae}, 22(1):109--130, 1999.

\bibitem{todoruzca}
S.~Todor\v{c}evi\'c and C.~Uzc\'ategui.
\newblock Analytic topologies over countable sets.
\newblock {\em Top. and its Appl.}, 111(3):299--326, 2001.

\bibitem{Todoruzca2000}
S.~Todor\v{c}evi\'c and C.~Uzc\'ategui.
\newblock Analytic $k$-spaces.
\newblock {\em Top. and its Appl.}, 146-147:511--526, 2005.

\bibitem{Todoruzca2014}
S.~Todor\v{c}evi\'c and C.~Uzc\'ategui.
\newblock A nodec regular analytic space.
\newblock {\em Top. and its Appl.}, 166:85--91, 2014.

\bibitem{Vand}E. Van Douwen. \textit{Applications of maximal topologies.} Top. and its Appl., 51:125-139, 1993.

\end{thebibliography}

\end{document}